\numberwithin{equation}{section}
\newcommand{\intt}{\int_{\mathbb{R}^{3}}}
\newtheorem{thm}{Theorem}[section]
\newtheorem{lem}{Lemma}[section]
\newtheorem{prop}{Proposition}[section]
\newtheorem{rem}{Remark}[section]
\newcommand{\beq}{\begin{eqnarray}}
\newcommand{\eeq}{\end{eqnarray}}
\newcommand{\beqno}{\begin{eqnarray*}}
\newcommand{\eeqno}{\end{eqnarray*}}
\newcommand{\ba}{\begin{align}}
\newcommand{\ea}{\end{align}}
\newcommand{\ee}{\end{equation}}
\theoremstyle{definition}
\newcommand{\non}{\nonumber}
\newcommand{\mdiv}{\mathrm{div}}
\newcommand{\D}{\displaystyle}
\newcommand{\bi}{\Big}
\newcommand{\rf}{\frac{\mathrm{d}}{\mathrm{d}t}}
\def\div{\mathop{\rm div}\nolimits}
\begin{document}
\title{\bf Global classical solution to 3D compressible magnetohydrodynamic equations with large initial data and vacuum}
\author{
Guangyi Hong\thanks{School of Mathematics and Statistics, Central
China Normal University, Wuhan 430079, China. E-mail:
hongguangyi1989@126.com},\ \ Xiaofeng Hou\thanks{School of
Mathematics and Statistics, Central China Normal University, Wuhan
430079, China. E-mail: xiaofengh0513@163.com},\ \ Hongyun
Peng\thanks{School of Mathematics and Statistics, Central China
Normal University, Wuhan 430079, China. E-mail: penghy010@163.com},\
\ Changjiang Zhu\thanks{Corresponding author. School of Mathematics, South China University of Technology, Guangzhou,
{510641}, China. E-mail: cjzhu@mail.ccnu.edu.cn}
 \\
}
\date{}
\maketitle
\begin{abstract}
In this paper, we study the Cauchy problem of the isentropic
 compressible magnetohydrodynamic equations in $\mathbb{R}^{3}$. When
  $(\gamma-1)^{\frac{1}{6}}E_{0}^{\frac{1}{2}}$, together with the $\|H_{0}\|_{L^{2}}$, is suitably small,
   a result on the existence of global classical solutions is obtained.
    It should be pointed out that the initial energy $E_{0}$  except the $L^{2}$- norm of $H_{0}$ can
    be large as $\gamma$ goes to 1, and that throughout the proof of the theorem in the present paper,
     we make no restriction upon the initial data $(\rho_{0},u_{0})$. Our result improves the one established by Li-Xu-Zhang in \cite{H.L. L},
      where, with small initial engergy, the existence of classical solution was proved.
\vspace{4mm}

{\textbf{Keyword:} isentropic compressible
 magnetohydrodynamic equations, global classical solution, vacuum.}\\

{\textbf{AMS Subject Classification (2000):} 35K65, 76N10, 76W05.}
\end{abstract}
\tableofcontents
\section{Introduction}

\qquad In this paper, we consider the following isentropic
 compressible magnetohydrodynamic equations in $\mathbb{R}^{3}$ (refer, e.g., \cite{Cabannes, Kulikovskiy}):
 \begin{equation}\label{2dbu-E1.1}
 \left\{
 \begin{array}{l}
   \D\rho_{t}+\mathrm{div}(\rho u)=0,\\[2mm]
 \D(\rho u)_{t}+\mathrm{div}(\rho u\otimes u)+\nabla P(\rho)=(\nabla\times H)\times H+\mu\Delta u+(\mu+\lambda)\nabla\mathrm{div}u,\\[2mm]
 \D H_{t}-\nabla \times(u\times H)=-\nabla\times(\nu\nabla\times H),\\[2mm]
 \D\mathrm{div}H=0,\ \ x\in \mathbb{R}^{3},\ \ t>0,
 \end{array}
 \right.
 \end{equation}
with the initial data
\begin{align}
 \D (\rho, u, H)(x,0)=(\rho_{0}, u_{0}, H_{0})(x),\ \ x\in \mathbb{R}^{3},
\end{align}
and the far-field behavior
\begin{align}
  \D (\rho, u, H)\rightarrow (0,0,0)\ \ \mathrm{as}\ \ |x|\rightarrow\infty,\ \ \mathrm{for}\ \  t>0.
\end{align}
Here $\rho=\rho(x,t)$, $u=(u^{1}, u^{2}, u^{3})(x,t)$, $P$ and $H=(H^{1}, H^{2}, H^{3})(x,t)$ represent the density, velocity, pressure and magnetic field of the fluid respectively. More precisely, $P$ is given by
\begin{align}
 \D P(\rho)=A\rho^{\gamma},
\end{align}
where $\gamma$ is the adiabatic exponent, and $A>0$ is a constant. Without loss of generality, we assumed that $A=1$. The viscosity coefficients $\mu$ and $\lambda$ satisfy
\begin{align}\label{2dbu-E1.5}
  \D \mu> 0,\ \ 3\lambda+2\mu\geq 0.
\end{align}
The constant $\nu>0$ is the resistivity coefficient which is inversely proportional
 to the electrical conductivity and acts as the magnetic diffusivity of magnetic fields.

 The magnetohydrodynamic $(\mathrm{MHD})$ model is used to study the dynamics
 of conducting fluid under the effect of the magnetic field and finds
its way in a huge range of physical objects, from liquid metals
to cosmic plasmas, refer for example \cite{Cabannes, Jeffrey, Landau, Kulikovskiy, Polovin}. And for so, there have
been a lot of literatures on the $\mathrm{MHD}$
system (\ref{2dbu-E1.1})-(\ref{2dbu-E1.5}), see for instance, \cite{G.-Q1, G.-Q2, ducomet,
 J. Fan1, J. Fan2, J. Fan3, X. Hu1, X. Hu2, X. Hu3, X. Hu4, kawashima1,
  kawashima2, kawashima3, kawashima4,
   Umeda, volpert, D. Wang, J.W. Zhang1, J.W. Zhang2} and
references therein. It should be noted that if $H=0$, i.e., there
is no electromagnetic effect, then (\ref{2dbu-E1.1}) becomes the
compressible Navier-Stokes equations, which has been widely studied,
refer for example \cite{cho1, cho2, s.j. D1, s.j. D2, hoff, Kazhi-Sheluk, lions, H.Y. Wen1, H.Y. Wen2, H.Y. Wen3} and the references therein. The main
difficulty in investigating the issues of well-posedness and
dynamical behaviors of $\mathrm{MHD}$ system is caused by the
strong coupling and interplay interaction between the fluid motion and the magnetic field.
Now let's recall briefly some results on the multi-dimensional
compressible $\mathrm{MHD}$ system, especially the ones that are
closely relative to our topic in the present paper. With large initial data,
the local strong solutions
to the compressible $\mathrm{MHD}$ equations were proved
in \cite{volpert} and \cite{J. Fan3} for the case $\rho_{0}>0$ and the case $\rho_{0}\geq 0$, respectively.
When the initial data are small perturbations of a given constant state in $H^{3}$ -norm,
 Kawashima in \cite{kawashima1} firstly established a result on the global existence of smooth
 solutions to the general electro-magneto-fluid equations in $\mathbb{R}^{2}$.
The global existence and time decay rate of smooth solutions to the linearized two-dimensional
compressible $\mathrm{MHD}$ equations was studied by Umeda, Kawashima and Shizuta in \cite{Umeda}.
Zhang and Zhao \cite{J.W. Zhang2} proved the optimal decay estimates of classical solutions to the compressible
$\mathrm{MHD}$ equations when the initial data are close to a nonvacuum equilibrium.
For the case that the initial density is allowed to vanish and even has compact support,
Li-Xu-Zhang \cite{H.L. L} established  a result on the existence and large-time behavior of classical solution
 with regular initial data, which are of small energy but possibly large oscillations,
  and constant state as far field density which may contain vacuum.

Before stating our main results, we firstly explain the notations and conventions used through this paper.\\

\noindent\textbf{Notations.}\\

\vspace{2mm}

 (i)  $\D \intt f=\intt f dx,\ \ \int_{0}^{T} f =\int_{0}^{T}f dt.$

 \vspace{2mm}

 (ii) For $1\leq r\leq \infty$, denote the $L^{r}$ spaces and the standard Sobolev spaces as follows:
 \begin{equation}
 \left\{
 \begin{array}{l}
  \D L^{r}=L^{r}(\mathbb{R}^{3}),\ \ D^{k,r}=\{u\in L_{loc}^{1}(\mathbb{R}^{3})|\nabla^{k}u\in L^{r}(\mathbb{R}^{3})\},\ \ \|u\|_{ D^{k,r}}=\|u\|_{L^{r}},\\[2mm]
  \D D^{1}=D^{1,2},\ \ W^{k,r}=W^{k,r}(\mathbb{R}^{3}),\ \ H^{k}=W^{k,2},\\[2mm]
  \D \dot{H}^{\beta}=\left\{u:\mathbb{R}^{3}\rightarrow \mathbb{R}\Big|\|u\|_{\dot{H}^{\beta}}^{2}=\int_{R^{3}}|\xi|^{2\beta}|\hat{u}(\xi)|^{2}d\xi<\infty\right\}.
 \end{array}
  \right.
 \end{equation}

 \vspace{2mm}

 (iii) $\D G\triangleq(2\mu+\lambda)\mdiv u-P-\frac{1}{2}|H|^{2}$ is the so-called effective viscous flux, while $\omega\triangleq \nabla\times u$ is the vorticity.

  \vspace{2mm}

 (iv) $\D \dot{h}=h_{t}+u\cdot\nabla h$ denotes the material derivatives.

  \vspace{2mm}

  (v) $\D E_{0}=\intt\left(\frac{1}{2}\rho_{0}|u_{0}|^{2}+\frac{1}{\gamma-1}\rho_{0}^{\gamma}+\frac{1}{2}|H_{0}|^{2}\right)$ is the initial energy.

  \bigbreak

  Now it is the place to state our main theorem.
 \begin{thm}\label{2dbu-T1.1}
 Assume that the initial data $(\rho_{0},u_{0},H_{0})$ satisfy
 \begin{equation}\label{2dbu-E1.9}
 \left\{
   \begin{array}{l}
     \D \frac{1}{2}\rho_{0}|u_{0}|^{2}+\frac{1}{\gamma-1}\rho_{0}^{\gamma}+\frac{1}{2}|H_{0}|^{2}\in L^{1},\ \ 0\leq \rho_{0}\leq\bar{\rho},\\[2mm]
     \D (\rho_{0},P(\rho_{0}))\in H^{2}\cap W^{2,q},\ \ u_{0}\in D^{1}\cap D^{2},\\[2mm]
     \D H_{0}\in \cap D^{1}\cap D^{2},\ \  \|\nabla u_{0}\|_{L^{2}}^{2}\leq M_{1},\ \  \D\|H_{0}\|_{D^{1}}^{2}\leq M_{2},\\[2mm]
     \D \|H_{0}\|_{L^{2}}^{2}\leq (\gamma-1)^{\frac{1}{6}}E_{0}^{\frac{1}{2}},
  \end{array}
  \right.
 \end{equation}
 for given constants $ M_{i}>0\ (i=1,2)$, $\bar{\rho}\geq 1$ and $q\in (3, 6)$,
  and that the compatibility condition holds
  \begin{align}\label{2dbu-E1.8}
   \D -\mu\Delta u_{0}-(\lambda+\mu)\nabla\mdiv u_{0}+\nabla P(\rho_{0})+\frac{1}{2}\nabla|H_{0}|^{2}-H_{0}\cdot\nabla H_{0}=\rho^{\frac{1}{2}}g,
  \end{align}
  with $g\in L^{2}$. In addition, we suppose that
  \begin{align}\label{2dbu-E1.011}
   \D (\gamma-1)^{\frac{1}{24}}E_{0}\leq 1,\ \ 1< \gamma\leq\frac{3}{2}.
  \end{align}
   Then, there exists a unique global classical solution $(\rho, u, H)$ in $\mathbb{R}^{3}\times [0,\infty)$ satisfying
  \begin{align}
    \D 0\leq \rho(x,t)\leq 2\bar{\rho},\  x\in \mathbb{R}^{3},\ t\geq 0,
  \end{align}
  and
  \begin{equation}
  \left\{
    \begin{array}{l}
       \D (\rho, P)\in C([0,T]; H^{2}\cap W^{2,q}),\\[2mm]
       \D u\in C([0,T]; D^{1}\cap D^{2})\cap L^{\infty}(\tau, T;D^{3}\cap D^{3,q}),\\[2mm]
       \D u_{t}\in L^{\infty}(\tau, T;D^{1}\cap D^{2})\cap H^{1}(\tau, T;D^{1}),\\[2mm]
       \D H\in C([0,T]; H^{2})\cap L^{\infty}(\tau, T;H^{3}),\\[2mm]
       \D H_{t}\in C([0,T]; L^{2})\cap H^{1}(\tau, T;L^{2}),
     \end{array}
     \right.
  \end{equation}
  for any $0< \tau< T<\infty$, provided that
\begin{align}
    \D (\gamma-1)^{\frac{1}{6}}E_{0}^{\frac{1}{2}}\leq \varepsilon\triangleq\min\left\{\left(\frac{\bar{\rho}}{2K_{9}}\right)^{16},\varepsilon_{5},\frac{\bar{\rho}}{4}\right\}.\non\\
\end{align}
  Here
  \begin{align}
  \D&\varepsilon_{5}=\min\left\{\varepsilon_{4},(C_{4}K_{9})^{-1},1\right\},\non\\
  \D&\varepsilon_{4}=\min\left\{\varepsilon_{3},\frac{1}{K_{6}^{54}},
  \left(C(\bar{\rho})(\gamma-1)^{\frac{1}{3}}K_{6}^{\frac{3}{2}}\right)^{-\frac{9}{8}},1\right\},\non\\
  \D&\varepsilon_{3}=\min\left\{\varepsilon_{1},\varepsilon_{2},4C_{3}(\bar{\rho}))^{-27},1\right\},\non\\
    \D &\varepsilon_{2}=\min\left\{\varepsilon_{1},(4C_{1})^{-\frac{27}{2}}+(4C_{2})^{-27}\right\},\non\\
    \D &\varepsilon_{1}=\min\left\{1,\left(4C(M_{2}^{\frac{3}{4}}+K_{1})\right)^{-9}\right\}.
  \end{align}
 \end{thm}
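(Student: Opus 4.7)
My plan is to follow the now-standard continuation strategy pioneered by Huang--Li--Xin for compressible Navier--Stokes and extended to MHD in \cite{H.L. L}: take the local classical solution on a maximal interval $[0,T^{\ast})$ provided by \cite{volpert, J. Fan3}, and under the smallness hypothesis $(\gamma-1)^{1/6}E_{0}^{1/2}\le\varepsilon$ together with $\|H_0\|_{L^2}^2\le(\gamma-1)^{1/6}E_0^{1/2}$ derive a collection of a priori estimates uniform in $T<T^{\ast}$, which rules out blow-up and then propagates the higher regularity asserted in the statement. The crucial novelty compared with \cite{H.L. L}, where the smallness is imposed on $E_{0}$ itself, is that the factor $(\gamma-1)$ must be tracked in every estimate involving the pressure: the basic energy identity yields $\sup_{t}E(t)\le E_{0}$ and $\intt\rho^{\gamma}\le(\gamma-1)E_{0}$, so a quantity small as $\gamma\to 1$ enters every step where $P=\rho^{\gamma}$ is estimated, which is what permits $E_{0}$ itself to be large.

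Concretely, I would introduce the time-weighted Huang--Li--Xin functionals
\begin{align*}
A_{1}(T) & \triangleq \sup_{0\le t\le T}\sigma(t)\|\nabla u\|_{L^{2}}^{2}+\int_{0}^{T}\sigma(t)\intt\rho|\dot u|^{2}\,dt,\\
A_{2}(T) & \triangleq \sup_{0\le t\le T}\sigma^{3}(t)\intt\rho|\dot u|^{2}+\int_{0}^{T}\sigma^{3}(t)\|\nabla\dot u\|_{L^{2}}^{2}\,dt,\\
A_{3}(T) & \triangleq \sup_{0\le t\le T}\bigl(\|H\|_{L^{2}}^{2}+\|\nabla H\|_{L^{2}}^{2}\bigr)+\int_{0}^{T}\bigl(\|\nabla H\|_{L^{2}}^{2}+\|H_{t}\|_{L^{2}}^{2}\bigr)dt,
\end{align*}
with $\sigma(t)=\min\{1,t\}$, and set up a bootstrap in which the assumed bounds $A_{1}+A_{2}\le 2K_{1}$, $A_{3}\le 2K_{2}$, $\|\rho\|_{L^{\infty}}\le 2\bar\rho$ are to be strictly improved by a factor $1/2$. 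Testing the momentum equation against $\dot u$ and then against $\dot u_{t}$ yields the classical $A_{1}$ and $A_{2}$ estimates, where the Lorentz force $(\nabla\times H)\times H$ is matched, via a standard energy identity on the induction equation, by the $A_{3}$ bound; the coupling is closed using the smallness of $\|H_{0}\|_{L^{2}}^{2}$, which, combined with parabolic smoothing of $H$, keeps the magnetic contribution small on all of $[0,T]$. Higher $L^{p}$ control of $\nabla u$ is then obtained from the effective viscous flux $G\triangleq(2\mu+\lambda)\mdiv u-P-\tfrac12|H|^{2}$ via the elliptic bound $\|\nabla G\|_{L^{p}}+\mu\|\nabla\omega\|_{L^{p}}\lesssim\|\rho\dot u\|_{L^{p}}+\|H\cdot\nabla H\|_{L^{p}}$ together with Gagliardo--Nirenberg interpolation.

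The main obstacle is the pointwise upper bound $\rho(x,t)\le 2\bar\rho$. I would derive it from the Lagrangian identity
\begin{equation*}
\frac{d}{dt}\log\rho+\frac{1}{2\mu+\lambda}\Bigl(P(\rho)+\tfrac12|H|^{2}\Bigr)=-\frac{1}{2\mu+\lambda}G,
\end{equation*}
holding along each particle trajectory, combined with Zlotnik's lemma: since $P(\rho)+\tfrac12|H|^{2}\ge 0$ furnishes a nonnegative damping that becomes strictly positive as soon as $\rho\ge\xi>\bar\rho$, it suffices to estimate $\int_{0}^{T}\|G(\cdot,t)\|_{L^{\infty}}\,dt$ by a small, $T$-independent constant. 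Interpolating $\|G\|_{L^{\infty}}$ between $\|G\|_{L^{2}}$ and $\|\nabla G\|_{L^{q}}$, expressing the latter via $\|\rho\dot u\|_{L^{p}}+\|H\cdot\nabla H\|_{L^{p}}$ and reducing the whole expression to $A_{1},A_{2},A_{3}$ and $E_{0}$, produces a bound proportional to a positive power of $(\gamma-1)^{1/6}E_{0}^{1/2}$ plus a positive power of $\|H_{0}\|_{L^{2}}$; the book-keeping of these powers is exactly what dictates the cascade of thresholds $\varepsilon_{1},\dots,\varepsilon_{5}$ appearing in the statement. Once the bootstrap bounds plus the density bound are strictly improved, a routine continuation argument extends the solution to $[0,\infty)$; the higher regularity of $(\rho,u,H)$ on $[\tau,T]$ is propagated by differentiating the equations once more and repeating the energy estimates on $(\rho_{t},u_{t},H_{t})$, while uniqueness follows from the standard $L^{2}$-type argument applied to the difference of two solutions.
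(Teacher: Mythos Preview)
Your outline captures the broad continuation strategy, but it misses the specific mechanism that makes the large-energy case work, and as written the bootstrap will not close.  The difficulty is that since $E_{0}$ is allowed to be large, the basic energy bound gives only $\int_{0}^{T}\|\nabla u\|_{L^{2}}^{2}\le E_{0}/\mu$, which is \emph{not} small; this is precisely the quantity whose smallness drives the argument in \cite{H.L. L}.  Simply ``tracking the factor $(\gamma-1)$'' in the \cite{H.L. L} estimates does not recover smallness of the cubic and quartic gradient terms, nor of the Gronwall exponent in the $H$-equation.

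The paper introduces two additional short-time functionals that you omit:
\[
A_{4}(\sigma(T))=\sup_{0\le t\le\sigma(T)}\sigma^{1/3}\|\nabla u\|_{L^{2}}^{2},\qquad
A_{5}(\sigma(T))=\sup_{0\le t\le\sigma(T)}\intt\rho|u|^{3},
\]
and the bootstrap hypothesis is not $A_{i}\le 2K_{i}$ with generic constants but $A_{i}\le 2\bigl((\gamma-1)^{1/6}E_{0}^{1/2}\bigr)^{\alpha_{i}}$ with specific exponents.  The functional $A_{4}$ is what makes $\int_{0}^{T}\|\nabla u\|_{L^{2}}^{4}$ controllable (Lemma~\ref{2dbu-L3.02}): on $[0,\sigma(T)]$ one writes $\|\nabla u\|^{4}=\sigma^{-2/3}(\sigma^{1/3}\|\nabla u\|^{2})^{2}$, and $\sigma^{-2/3}$ is integrable whereas the $\sigma^{-2}$ your $A_{1}$ would require is not.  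This $L^{4}$-in-time bound is then the Gronwall weight in the $H$-estimates (Lemma~\ref{2dbu-L3.3}), which is why your sentence ``parabolic smoothing of $H$ keeps the magnetic contribution small'' cannot be substantiated with your $A_{1}$ alone.  The functional $A_{5}$ enters when one multiplies the momentum equation by $u_{t}$ to obtain a \emph{boundedness} (not smallness) estimate $\sup_{[0,\sigma(T)]}\|\nabla u\|_{L^{2}}^{2}\le K_{6}$ (Lemma~\ref{2dbu-L3.7}); the convective term produces $\|\rho^{1/3}u\|_{L^{3}}=A_{5}^{1/3}$, and without $A_{5}$ in the bootstrap this step fails because $\|\rho^{1/2}u\|_{L^{2}}\le E_{0}^{1/2}$ is large.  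Finally, your $A_{3}$ is not the one the paper uses: the relevant magnetic control is $\sup_{t}\|H\|_{L^{3}}^{3}+\int_{0}^{T}\intt|H||\nabla H|^{2}$, and the $L^{3}$ norm of $H$ appears repeatedly in the coupling estimates (e.g.\ in bounding $\|H\cdot\nabla H\|_{L^{2}}\le\|H\|_{L^{3}}\|\nabla^{2}H\|_{L^{2}}$).  With $A_{4},A_{5}$ and the $L^{3}$ version of $A_{3}$ added to the bootstrap, the cascade $\varepsilon_{1},\dots,\varepsilon_{5}$ in the statement then records exactly the thresholds at which each of Lemmas~\ref{2dbu-L3.4}--\ref{2dbu-L3.10} closes.
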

 \begin{rem}
 We make no restriction on the initial data $(\rho_{0},u_{0})$. In fact, it follows from (\ref{2dbu-E1.9}) that $\D E_{0}\leq C_{0}(\gamma-1)^{-\frac{1}{24}}$, then the upper bound of $E_{0}$ may go to $\infty$ as $\gamma$ goes to 1, in spite that $\|H_{0}\|_{L^{2}}$ is small.
 \end{rem}
  \begin{rem}The solution obtained in Theorem \ref{2dbu-T1.1} becomes a classical one away from the initial time. More precisely, we establish a result on the existence of a classical solution to (\ref{2dbu-E1.1})-(
 \ref{2dbu-E1.5}) under the assumption that $(\gamma-1)^{\frac{1}{6}}E_{0}^{\frac{1}{2}}$
 and  $\|H_{0}\|_{L^{2}}$ are
 suitably small. Moreover, we care more about the case that $\gamma$ is near 1,
  so the assumption $1< \gamma \leq\frac{3}{2}$ is reasonable. Indeed,
   the initial energy except the $L^{2}$-norm of $H_{0}$
    is allowed to be large when $\gamma$ is near 1. When the far-field density is vacuum,
     our result in Theorem \ref{2dbu-T1.1} is a generalization of that in \cite{H.L. L}. It should be emphasized that for the case $\gamma$ is some fixed constant, Theorem \ref{2dbu-T1.1} is still applicable ( necessarily after some modification for the proof ).
 \end{rem}
 \begin{rem}
 If we remove $\|\nabla u\|_{L^{2}}\leq M_{1}$ and $\|\nabla H\|_{L^{2}}\leq M_{2}$
 in (\ref{2dbu-E1.9}) in Theorem \ref{2dbu-T1.1}, and assume instead that
 $u_{0}, H_{0}\in \dot{H}^{\beta}(\beta\in (\frac{1}{2},1])$ with $\|u\|_{ \dot{H}^{\beta}}\leq \bar{M}_{1}$ and
 $\D \|H\|_{ \dot{H}^{\beta}}\leq \bar{M}_{2}$ for some $\bar{M}_{i}> 0\ (i=1,2)$, Theorem \ref{2dbu-T1.1} will still hold, and the $\varepsilon$ in Theorem \ref{2dbu-T1.1} will also depend on $\bar{M}_{i}$
  instead of $M_{i}$ correspondingly. This can be achieved by a similar way as in \cite{H.L. L}.
  \end{rem}
 \begin{rem}\label{2dbu-R0.1}
 It should be noted that when the viscous coefficient $\mu$ is taken to be suitable large, the initial energy except the $L^{2}$- norm of $H_{0}$
  could also be large, which together with the conclusion in Theorem \ref{2dbu-T1.1}, implies the fact that when $\D (\gamma-1)^{\frac{1}{6}}E_{0}^{\frac{1}{2}}\mu^{-\alpha_{1}}$
  and $\|H_{0}\|_{L^{2}}$ are
  suitably small for some $\alpha_{1}>0$,
 the existence of classical solutions to (\ref{2dbu-E1.1})-(\ref{2dbu-E1.5}) could also be obtained. And this can be done by using a
 similar method as in \cite{X.F. Hou}, which considered the compressible Navier-Stokes equations,
 we omit it for simplicity in the present paper. Moreover,  when $H=0$, i.e., there is no electromagnetic effect, (\ref{2dbu-E1.1})
  reduces to the compressible Navier-Stokes equations. Roughly speaking, we generalize the result of \cite{X.F. Hou} to the compressible $\mathrm{MHD}$
  equations.
 \end{rem}
 We now briefly make some comments on the analysis of the present paper. Note that
  the local existence and uniqueness of classical solutions to problem
 (\ref{2dbu-E1.1})-(\ref{2dbu-E1.5}) can be proved by combining the arguments in \cite{J. Fan3}
 with the higher order estimates in section 4 of \cite{H.L. L}. Hence,
 to extend the classical solution globally in time,
 we just need some global a priori estimates on the smooth solution $(\rho, u, H)$ in suitable regularity norms. Formally, the key to the proof
 is to get the time-independent upper bound of the density as well as the time-dependent higher norm estimates of $(\rho, u, H)$.
In this paper, the latter one follows in the same way as in \cite{H.L. L} (see Lemmas 4.1-4.6), once the former one is achieved. To derived the upper bound of the density, on the one hand, we try to adapt some basic ideas in \cite{hoff, X.D. Huang, H.L. L}. However,
new difficulties arise in our analysis, since the smallness of $(\gamma-1)^{\frac{1}{6}}E_{0}^{\frac{1}{2}}$ does not result in the small initial energy. One the other hand, compared with compressible Navier-Stokes equations, the strong coupling and interplay interaction between the fluid motion and the magnetic field, such as $\nabla\times(u\times H)$ and $(\nabla \times H)\times H$, will bring out some new difficulties.

Precisely, in \cite{hoff, X.D. Huang, H.L. L} the smallness of the initial energy was used to ensure the smallness of $\int_{0}^{T}\intt|\nabla u|^{2}$ and $\|H_{0}\|_{L^{2}}$, which play crucial role in the proof of the upper bound of density. Similar to \cite{X.F. Hou, X.D. Huang, H.L. L}, here we need to close
 the a priori estimates $A_{1}$ and $A_{2}$. Compared with \cite{X.F. Hou, X.D. Huang}, we not only need to handle
 the terms $|\nabla u|^{2}$, $|\nabla u|^{3}$, $|\nabla u|^{4}$, $P|\nabla u|^{2}$, $|P\nabla u|^{2}$,
 but the terms caused by $\nabla\times(u\times H)$ and $(\nabla \times H)\times H$,
 like $H\cdot\nabla H\cdot u$ and $\nabla|H|^{2}\cdot u$.
 Adapting the idea developed in \cite{X.F. Hou}, we need to derive the smallness of
$\int_{0}^{\sigma(T)}|\nabla u|^{2}$, but this is not trivial because of the lack
 of the smallness of $\|H_{0}\|_{L^{2}}$. The key observation to overcome this
 difficulty is as follows: Looking back to the basic energy $E_{0}=\intt\left(\frac{1}{2}\rho_{0}|u_{0}|^{2}+\frac{1}{\gamma-1}\rho_{0}^{\gamma}+\frac{1}{2}|H_{0}|^{2}\right)$, the smallness of
 of $\gamma-1$ could remove the smallness restriction upon $\rho_{0}$ and the term involving $u_{0}$ and $\rho_{0}$. But it
 has nothing to do with $H_{0}$. Moreover, For all terms in (\ref{2dbu-E1.1}), we can never see any term in which $\rho$ is coupled with $H$.
 Hence we assume that $\|H_{0}\|_{L^{2}}\leq (\gamma-1)^{\frac{1}{6}}E_{0}$ is small, and then we succeed to derive some estimates on the smallness and boundedness of $H$ and its derivatives with a key estimate $\int_{0}^{T}\|\nabla u\|_{L^{2}}^{4}$. Similar to \cite{X.F. Hou, X.D. Huang}, we try to estimate $A_{1}$ and $A_{2}$
 and achieve an inequality involving $|\nabla u|^{2}$, $|\nabla u|^{3}$, $|\nabla u|^{4}$, $P|\nabla u|^{2}$ and $|P\nabla u|^{2}$. And then we handle all these terms one the right hand side of the inequality with two crucial boundedness estimate (see Lemma \ref{2dbu-L3.7}). Thus the upper bound of $\rho$ is obtained by a standard method as in \cite{X.F. Hou, X.D. Huang,H.L. L}, together with some new estimate (see (\ref{2dbu-E3.01117}) and (\ref{2dbu-E3.122})). It should be noted that during the process, the estimates obtained for $H$ always play a key role, especially when controlling the coupled term  $\nabla\times(u\times H)$.

 The rest of the paper is organized as follows. In section 2, we first collect some elementary inequalities and facts which will be need in the later analysis. In section 3, we devote to derive the necessary lower-order a priori estimates on the classical solution which is independent of time. The time-dependent estimates on the higher-norms of the solutions will be proved in Section 4, and then Theorem \ref{2dbu-T1.1} is proved.

 \section{Preliminaries}
 \qquad In this section, we will recall some elementary inequality and results which will be used used frequently later. We begin with the following well-known Gagliardo-Nirenberg inequality (see \cite{Ladyzenskaja}).
 \begin{lem}\label{2dbu-L2.1}
   For $2\leq p\leq 6$, $1<q<\infty$, and $3<r<\infty$, there exists a generic constant $C> 0$, depending only on $q$ and $r$, such that for
   $f\in H^{1}$ and $g \in L^{q}\cap D^{1,r}$, we have
   \begin{align}
     \D&\|f\|_{L^{p}}\leq C\|f\|_{L^{2}}^{\frac{6-p}{2p}}\|\nabla f\|_{L^{2}}^{\frac{3p-6}{2p}},\label{2dbu-E2.01}\\
     \D&\|g\|_{L^{\infty}}\leq C\|g\|_{L^{q}}^{\frac{q(r-3)}{3r+q(r-3)}}\|g\|_{L^{r}}^{\frac{3r}{3r+q(r-3)}}.\label{2dbu-E2.02}
   \end{align}
 \end{lem}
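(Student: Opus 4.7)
Both estimates are classical Gagliardo--Nirenberg interpolation inequalities on $\mathbb{R}^3$ and appear explicitly in the Ladyzhenskaya monograph cited by the authors; the plan is to derive them from the endpoint Sobolev embedding together with H\"older interpolation (for \eqref{2dbu-E2.01}) and a Morrey-type ball-averaging argument (for \eqref{2dbu-E2.02}).

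For \eqref{2dbu-E2.01}, I would first recall the Sobolev embedding $\|f\|_{L^6}\le C\|\nabla f\|_{L^2}$ on $\mathbb{R}^3$, which follows from the Loomis--Whitney inequality applied to $|f|^2$ (or, equivalently, from the Riesz potential representation of $f$ in terms of $\nabla f$). For $p\in[2,6]$ I then choose the interpolation exponent $\theta=(6-p)/(2p)$ so that $1/p=\theta/2+(1-\theta)/6$, and a single H\"older inequality gives $\|f\|_{L^p}\le\|f\|_{L^2}^{\theta}\|f\|_{L^6}^{1-\theta}$; substituting the Sobolev bound into the second factor yields \eqref{2dbu-E2.01} with the prescribed exponents. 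This part is essentially automatic.

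For \eqref{2dbu-E2.02} the plan is a ball-averaging decomposition: for $x\in\mathbb{R}^3$ and $\lambda>0$,
\[
g(x)=\frac{1}{|B_\lambda|}\int_{B_\lambda(x)}g(y)\,dy+\frac{1}{|B_\lambda|}\int_{B_\lambda(x)}\bigl(g(x)-g(y)\bigr)\,dy.
\]
H\"older's inequality controls the first term by $C\lambda^{-3/q}\|g\|_{L^q}$, while the classical Morrey inequality (which is where the hypothesis $r>3$ genuinely enters) gives $|g(x)-g(y)|\le C|y-x|^{1-3/r}\|\nabla g\|_{L^r}$, and hence bounds the second term by $C\lambda^{1-3/r}\|\nabla g\|_{L^r}$. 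Taking the supremum over $x$ produces
\[
\|g\|_{L^\infty}\le C\bigl(\lambda^{-3/q}\|g\|_{L^q}+\lambda^{1-3/r}\|\nabla g\|_{L^r}\bigr),
\]
and I would conclude by choosing $\lambda>0$ so that the two contributions are of equal size. A direct computation then reproduces the exponents $\tfrac{q(r-3)}{3r+q(r-3)}$ and $\tfrac{3r}{3r+q(r-3)}$ stated in \eqref{2dbu-E2.02} (read, consistent with $g\in D^{1,r}$ and the necessary scale invariance, with the $D^{1,r}$-seminorm $\|\nabla g\|_{L^r}$ in place of the literally written $\|g\|_{L^r}$).

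The main---and only---nontrivial step is the Morrey bound used in the second decomposition: it is precisely there that the subcritical condition $r>3$ is needed, since otherwise the resulting kernel is not locally integrable on $B_\lambda(x)$. Everything else is a standard interplay of H\"older's inequality and one-parameter optimization, so no further machinery is required; in particular, no analogue of the compressible--MHD coupling or of vacuum has any bearing on this preliminary lemma.
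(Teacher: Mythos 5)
Your argument is correct and complete in all essentials. The paper itself gives no proof of this lemma --- it is stated as a known fact with a citation to Ladyzhenskaya--Solonnikov--Ural'ceva --- so there is nothing to compare against; what you give is the standard textbook derivation, and both halves check out: the exponent $\theta=(6-p)/(2p)$ does solve $1/p=\theta/2+(1-\theta)/6$ with $1-\theta=(3p-6)/(2p)$, and the one-parameter optimization of $\lambda^{-3/q}\|g\|_{L^{q}}+\lambda^{1-3/r}\|\nabla g\|_{L^{r}}$ does reproduce the exponents in \eqref{2dbu-E2.02}, which sum to $1$ and render the inequality scale-invariant precisely when the second factor carries $\|\nabla g\|_{L^{r}}$. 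You are also right on the two delicate points: the second factor in \eqref{2dbu-E2.02} must be read as $\|\nabla g\|_{L^{r}}$ (equivalently the $D^{1,r}$-seminorm; the paper's own definition $\|u\|_{D^{k,r}}=\|u\|_{L^{r}}$ contains the same typo), since $g$ is only assumed to lie in $L^{q}\cap D^{1,r}$; and the hypothesis $r>3$ enters exactly through the local integrability of $|x-y|^{-2r'}$ in the averaged Morrey step. One cosmetic slip: the embedding $\|f\|_{L^{6}}\le C\|\nabla f\|_{L^{2}}$ on $\mathbb{R}^{3}$ is obtained by applying the $W^{1,1}\hookrightarrow L^{3/2}$ (Loomis--Whitney) estimate to $|f|^{4}$, not to $|f|^{2}$ --- the latter yields only the $p=3$ case $\|f\|_{L^{3}}^{2}\le C\|f\|_{L^{2}}\|\nabla f\|_{L^{2}}$; since you also offer the Riesz-potential route and \eqref{2dbu-E2.01} only needs the embedding as a black box, this does not affect the proof.
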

 Similar to the compressible Navier-Stokes equations (see, for example\cite{X.F. Hou, X.D. Huang}), one can easily derive the following elliptic equations from (\ref{2dbu-E1.1}):
 \begin{align}\label{2dbu-E2.03}
 \D \Delta G=\mdiv (\rho \dot{u})-\mdiv\mdiv(H\otimes H),\ \ \mu\Delta \omega=\nabla\times (\rho\dot{u}-\mdiv(H\otimes H)).
 \end{align}
 We now state some elementary $L^{p}$-estimates for the elliptic equations in (\ref{2dbu-E2.03}) by the virtue of (\ref{2dbu-E2.01}).
 \begin{lem}\label{2dbu-L2.2}
 Let $(\rho, u, H)$ be a smooth solution to (\ref{2dbu-E1.1})-(\ref{2dbu-E1.5}) on $\mathbb{R}^{3}\times (0,T]$. Then there
 exists a generic $C>0$, which may depend on $\mu$ and $\lambda$, such that for any $p\in [2,6]$,
 \begin{align}
   \D \|\nabla G\|_{L^{p}}&+\|\nabla \omega\|_{L^{p}}\leq C\left(\|\rho\dot{u}\|_{L^{p}}+\|H\cdot\nabla H\|_{L^{p}}\right),\label{2dbu-E2.04}\\[2mm]
   \D\| G\|_{L^{6}}&+\| \omega\|_{L^{6}}\leq C\left(\|\rho\dot{u}\|_{L^{2}}+\|H\cdot\nabla H\|_{L^{2}}\right),\label{2dbu-E2.05}\\[2mm]
   \D\|\nabla u\|_{L^{6}}&\leq C(\|\rho\dot{u}\|_{L^{2}}+\|P\|_{L^{6}}+\|H\cdot\nabla H\|_{L^{2}}),\label{2dbu-E2.06}\\[2mm]
   \D\|\nabla u\|_{L^{4}}&\leq C\|\nabla u\|_{L^{2}}^{\frac{1}{4}}\|\rho\dot{u}\|_{L^{2}}^{\frac{3}{4}}
  +C\|P\|_{L^{2}}^{\frac{1}{4}}\|\rho\dot{u}\|_{L^{2}}^{\frac{3}{4}}
  +C\|H\|_{L^{4}}^{\frac{1}{2}}\|\rho\dot{u}\|_{L^{2}}^{\frac{3}{4}}\non\\[2mm]
  \D&\quad+C\|\nabla u\|_{L^{2}}^{\frac{1}{4}}\|H\cdot\nabla H\|_{L^{2}}^{\frac{3}{4}}
  +C\|P\|_{L^{2}}^{\frac{1}{4}}\|H\cdot\nabla H\|_{L^{2}}^{\frac{3}{4}}\non\\[2mm]
  \D&\quad+C\|H\|_{L^{4}}^{\frac{1}{2}}\|H\cdot\nabla H\|_{L^{2}}^{\frac{3}{4}}+C\|P\|_{L^{4}}.\label{2dbu-E2.07}
 \end{align}
 \end{lem}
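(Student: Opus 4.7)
The plan is to derive the four inequalities in sequence by combining (i) the Calder\'on--Zygmund $L^p$-boundedness of Riesz transforms applied to the elliptic identities (\ref{2dbu-E2.03}), (ii) the Hodge-type bound $\|\nabla u\|_{L^p}\le C(\|\div u\|_{L^p}+\|\omega\|_{L^p})$ for $1<p<\infty$ together with the definition of $G$, and (iii) Gagliardo--Nirenberg interpolation from Lemma \ref{2dbu-L2.1}. The only MHD-specific feature compared to the Navier--Stokes case is that $\div H=0$ lets one rewrite $\div\div(H\otimes H)=\div(H\cdot\nabla H)$, so the magnetic forcing enters (\ref{2dbu-E2.03}) in exactly the divergence/curl form required by singular-integral theory.

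For (\ref{2dbu-E2.04}), I would rewrite the first identity in (\ref{2dbu-E2.03}) as $\Delta G=\div(\rho\dot u-H\cdot\nabla H)$ and express $\nabla G=\nabla(-\Delta)^{-1}\div(-\rho\dot u+H\cdot\nabla H)$ as a matrix of Riesz transforms applied to $\rho\dot u-H\cdot\nabla H$; the classical $L^p$-boundedness for $1<p<\infty$ then delivers the estimate. The curl identity $\mu\Delta\omega=\nabla\times(\rho\dot u-H\cdot\nabla H)$ yields the companion bound for $\omega$ by exactly the same argument. For (\ref{2dbu-E2.05}), I would combine (\ref{2dbu-E2.04}) at $p=2$ with the Sobolev embedding $\|f\|_{L^6}\le C\|\nabla f\|_{L^2}$ applied to $G$ and $\omega$, both of which decay at infinity.

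For (\ref{2dbu-E2.06}), I would begin from the Hodge bound $\|\nabla u\|_{L^6}\le C(\|\div u\|_{L^6}+\|\omega\|_{L^6})$, itself a Riesz-transform consequence of $\Delta u=\nabla\div u-\nabla\times\omega$. The vorticity piece is immediately controlled by (\ref{2dbu-E2.05}); for the divergence piece I invoke $(2\mu+\lambda)\div u=G+P+\tfrac12|H|^2$, estimate $\|G\|_{L^6}$ by (\ref{2dbu-E2.05}), and absorb $\||H|^2\|_{L^6}\le C\|\nabla|H|^2\|_{L^2}\le C\|H\cdot\nabla H\|_{L^2}$ into the magnetic term via the Sobolev embedding together with the pointwise inequality $|\nabla|H|^2|\le 2|H||\nabla H|$.

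For (\ref{2dbu-E2.07}), I would again split $\|\nabla u\|_{L^4}\le C(\|\div u\|_{L^4}+\|\omega\|_{L^4})$ and, via the $G$-identity, reduce matters to estimating $\|G\|_{L^4}$, $\|\omega\|_{L^4}$ and $\|P\|_{L^4}$, keeping the $\tfrac12|H|^2$ piece inside $G$ before interpolation. The key step is Gagliardo--Nirenberg (\ref{2dbu-E2.01}) at $p=4$: $\|\omega\|_{L^4}\le C\|\omega\|_{L^2}^{1/4}\|\nabla\omega\|_{L^2}^{3/4}$, and likewise for $G$. Substituting $\|\omega\|_{L^2}\le C\|\nabla u\|_{L^2}$, $\|G\|_{L^2}\le C(\|\nabla u\|_{L^2}+\|P\|_{L^2}+\|H\|_{L^4}^{2})$, and $\|\nabla G\|_{L^2}+\|\nabla\omega\|_{L^2}\le C(\|\rho\dot u\|_{L^2}+\|H\cdot\nabla H\|_{L^2})$ from (\ref{2dbu-E2.04}) at $p=2$, and distributing $(a+b+c)^{1/4}(d+e)^{3/4}$ into its six cross products reproduces exactly the six summands of the form $\|\cdot\|^{1/4}\|\cdot\|^{3/4}$ in (\ref{2dbu-E2.07}), with the trailing $C\|P\|_{L^4}$ arising from the pressure contribution of $\div u$. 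The main bookkeeping subtlety is to ensure that each factor of $\|H\|_{L^4}^{1/2}$ emerges from $\|G\|_{L^2}^{1/4}$ in the Gagliardo--Nirenberg step, so that no spurious $\|H\|_{L^8}^2$ term from expanding $|H|^2$ inside $\div u$ pollutes the right-hand side; beyond this the argument is routine.
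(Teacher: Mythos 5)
Your proposal is correct and follows essentially the same route as the paper: the paper cites Lemma 2.2 of Li--Xu--Zhang for (\ref{2dbu-E2.04})--(\ref{2dbu-E2.06}) (which rest on exactly the Riesz-transform and Sobolev arguments you describe), and proves (\ref{2dbu-E2.07}) by writing $-\Delta u=-\nabla\mdiv u+\nabla\times\omega$, bounding $\|\nabla u\|_{L^{4}}$ by $\|G\|_{L^{4}}+\|P\|_{L^{4}}+\||H|^{2}\|_{L^{4}}+\|\omega\|_{L^{4}}$, and interpolating $G$, $\omega$ and $|H|^{2}$ via (\ref{2dbu-E2.01}) and (\ref{2dbu-E2.04}) at $p=2$, precisely as you outline. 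The only cosmetic remark is that both you and the paper tacitly read $\|H\cdot\nabla H\|_{L^{2}}$ as $\||H||\nabla H|\|_{L^{2}}$ when absorbing $\|\nabla|H|^{2}\|_{L^{2}}$, an abuse of notation that is harmless for how the lemma is used later.
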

 \begin{proof}
 The proof of inequalities (\ref{2dbu-E2.04})-(\ref{2dbu-E2.06}) can be found in Lemma 2.2 in \cite{H.L. L}. Here we will prove (\ref{2dbu-E2.07}). It follows from direct computation that
 \begin{align}
  \D -\Delta u=-\nabla\mdiv u+\nabla\times \omega,
 \end{align}
 then the standard $L^{p}$-estimate for elliptic equation, together with (\ref{2dbu-E2.01}) and (\ref{2dbu-E2.04}), leads to
 \begin{align}\label{2dbu-E2.09}
   \D\quad\|\nabla u\|_{L^{p}}&\leq C\left(\|\mdiv u\|_{L^{p}}+\|w\|_{L^{p}}\right)\non\\[2mm]
   \D&\leq C\left(\|G\|_{L^{p}}+\|P\|_{L^{p}}+\||H|^{2}\|_{L^{p}}+\|\nabla u\|_{L^{2}}^{\frac{6-p}{2p}}\Big(\|\rho\dot{u}\|_{L^{2}}^{\frac{3p-6}{2p}}+\|H\cdot\nabla H\|_{L^{2}}^{\frac{3p-6}{2p}}\Big)\right)\non\\[2mm]
   \D&\leq C\|\nabla u\|_{L^{2}}^{\frac{6-p}{2p}}\|\rho\dot{u}\|_{L^{2}}^{\frac{3p-6}{2p}}
  +C\|P\|_{L^{2}}^{\frac{6-p}{2p}}\|\rho\dot{u}\|_{L^{2}}^{\frac{3p-6}{2p}}
  +C\|H\|_{L^{4}}^{\frac{2(6-p)}{2p}}\|\rho\dot{u}\|_{L^{2}}^{\frac{3p-6}{2p}}\non\\[2mm]
  \D&\quad+C\|\nabla u\|_{L^{2}}^{\frac{6-p}{2p}}\|H\cdot\nabla H\|_{L^{2}}^{\frac{3p-6}{2p}}
  +C\|P\|_{L^{2}}^{\frac{6-p}{2p}}\|H\cdot\nabla H\|_{L^{2}}^{\frac{3p-6}{2p}}\non\\[2mm]
  \D&\quad+C\|H\|_{L^{4}}^{\frac{2(6-p)}{2p}}\|H\cdot\nabla H\|_{L^{2}}^{\frac{3p-6}{2p}}+C\|P\|_{L^{p}}.
 \end{align}
 Let $p=4$ in (\ref{2dbu-E2.09}), one gets (\ref{2dbu-E2.07}).
 \end{proof}
  To obtain the uniform (in time) upper bound of the density, we need the following Zlotnik inequality.
\begin{lem}[see\cite{A.Z}]
Assume that the function $y$ satisfies
\begin{align}
 \D y'(t)=g(y)+b'(t)\ \ \mathrm{on}\ \  [0,T],\  y(0)=y^{0},
\end{align}
with $g\in C(\mathbb{R})$ and $y, b\in W^{1,1}(0,T)$. If $g(\infty)=-\infty$ and
\begin{align}
 \D b(t_{2})-b(t_{1})\leq N_{0}+N_{1}(t_{2}-t_{1}),
\end{align}
for all $0\leq t_{1}\leq t_{2}\leq T$ with some $N_{0}\geq 0$ and $N_{1}\geq 0$, then
\begin{align}
 \D g(\xi)\leq -N_{1},\ \ for\ \ \xi\geq \bar{\xi}.
\end{align}
\end{lem}

\section{Time-independent estimates}
\qquad In this section, we will derive the uniform time-independent estimates of the solution to (\ref{2dbu-E1.1})-(\ref{2dbu-E1.5})
and the time-independent upper bound of the density. Assume that $(\rho, u, H)$ is a smooth solution
 to (\ref{2dbu-E1.1})-(\ref{2dbu-E1.5}) on $\mathbb{R}^{3}\times (0,T)$ for some positive time $T> 0$. Set
$\sigma=\sigma(t)\triangleq \min\{1,t\}$ and define the following functionals:
\begin{align}\label{2dbu-E3.01}
  \D A_{1}(T)&\triangleq\sup_{0\leq t\leq T}\sigma\intt\Big(|\nabla u|^{2}+|\nabla H|^{2}\Big)\non\\[2mm]
  &\D\quad+\int_{0}^{T}\sigma\left(\|\rho^{\frac{1}{2}}\dot{u}\|_{L^{2}}^{2}+\|\nabla^{2}H\|_{L^{2}}^{2}+
  \|H_{t}\|_{L^{2}}^{2}\right),\nonumber\\[2mm]
  \D A_{2}(T)&\triangleq\sup_{0\leq t\leq T}\sigma^{2}\intt\Big(\rho|\dot{u}|^{2}+|\nabla^{2} H|^{2}+|H_{t}|^{2}\Big)\non\\[2mm]
  &\D\quad+\int_{0}^{T}\sigma^{2}\Big(\|\nabla\dot{ u}\|_{L^{2}}^{2}+\|\nabla H_{t}\|_{L^{2}}^{2}\Big),\nonumber\\[2mm]
  \D A_{3}(T)&\triangleq \sup_{0\leq t\leq T}\|H\|_{L^{3}}^{3}+\int_{0}^{T}\intt|H||\nabla H|^{2},\nonumber\\[2mm]
 \D A_{4}(T) &\triangleq\sup_{0\leq t\leq T}\sigma^{\frac{1}{3}}\|\nabla u\|_{L^{2}}^{2},
  \nonumber\\[2mm]
    \D A_{5}(T)&\triangleq\sup_{0\leq t\leq T}\intt\rho|u|^{3}.
\end{align}
Throughout this section, for simplicity we denote by $C$ or $C_{i}$ $(i=1,2,\cdots)$ the generic positive constants which may depend on $\mu$, $\lambda$,
$\nu$, $A$, $\gamma$, $\bar{\rho}$, $\tilde{\rho}$, $C_{0}$, $M_{i}\ (i=1,2)$ and $\|\rho_{0}\|_{L^{1}}$ but independent of time $T>0$ and $\gamma-1$. Sometimes $C(\alpha)$ is also used to emphasize the dependent of $\alpha$. we state the key proposition in the present paper as follows.
\begin{prop}\label{2dbu-P3.1}
  Assume that the initial data $(\rho_{0}, u_{0}, H_{0})$ satisfy (\ref{2dbu-E1.9})-(\ref{2dbu-E1.011}). Let $(\rho, u, H)$
  be a smooth solution to problem (\ref{2dbu-E1.1})-(\ref{2dbu-E1.5}) on $\mathbb{R}^{3}\times (0,T]$ satisfying
  \begin{equation}\label{2dbu-E3.02}
  \left\{
  \begin{array}{l}
   \D 0\leq \rho(x,t)\leq 2\bar{\rho},\ (x,t)\in \mathbb{R}^{3}\times [0,T],\\
   \D A_{1}(T)+A_{2}(T)\leq 2\left((\gamma-1)^{\frac{1}{6}}E_{0}^{\frac{1}{2}}\right)^{\frac{1}{2}},\ \
   A_{3}(T)\leq 2\left((\gamma-1)^{\frac{1}{6}}E_{0}^{\frac{1}{2}}\right)^{\frac{1}{9}},\\
   A_{4}(\sigma(T))+A_{5}(\sigma(T))\leq 2\left((\gamma-1)^{\frac{1}{6}}E_{0}^{\frac{1}{2}}\right)^{\frac{1}{9}},
  \end{array}
  \right.
  \end{equation}
  then
  \begin{equation}
  \left\{
  \begin{array}{l}
   \D 0\leq \rho(x,t)\leq \frac{7}{4}\bar{\rho},\ (x,t)\in \mathbb{R}^{3}\times [0,T],\\
   \D A_{1}(T)+A_{2}(T)\leq \left((\gamma-1)^{\frac{1}{6}}E_{0}^{\frac{1}{2}}\right)^{\frac{1}{2}},\ \
   A_{3}(T)\leq \left((\gamma-1)^{\frac{1}{6}}E_{0}^{\frac{1}{2}}\right)^{\frac{1}{9}},\\
   A_{4}(\sigma(T))+A_{5}(\sigma(T))\leq \left((\gamma-1)^{\frac{1}{6}}E_{0}^{\frac{1}{2}}\right)^{\frac{1}{9}},
  \end{array}
  \right.
  \end{equation}
   provided that
   \begin{align}
    \D (\gamma-1)^{\frac{1}{6}}E_{0}^{\frac{1}{2}}\leq \varepsilon\triangleq\min\left\{\left(\frac{\bar{\rho}}{2K_{9}}\right)^{16},\varepsilon_{5},\frac{\bar{\rho}}{4}\right\}.\non\\
\end{align}
  Here
  \begin{align}
  \D&\varepsilon_{5}=\min\left\{\varepsilon_{4},(C_{4}K_{9})^{-1},1\right\},\non\\
  \D&\varepsilon_{4}=\min\left\{\varepsilon_{3},\frac{1}{K_{6}^{54}},
  \left(C(\bar{\rho})(\gamma-1)^{\frac{1}{3}}K_{6}^{\frac{3}{2}}\right)^{-\frac{9}{8}},1\right\},\non\\
  \D&\varepsilon_{3}=\min\left\{\varepsilon_{1},\varepsilon_{2},4C_{3}(\bar{\rho}))^{-27},1\right\},\non\\
    \D &\varepsilon_{2}=\min\left\{\varepsilon_{1},(4C_{1})^{-\frac{27}{2}}+(4C_{2})^{-27}\right\},\non\\
    \D &\varepsilon_{1}=\min\left\{1,\left(4C(M_{2}^{\frac{3}{4}}+K_{1})\right)^{-9}\right\}.
  \end{align}
\end{prop}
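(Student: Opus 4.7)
The plan is a standard continuation/bootstrap: assuming the hypothesized bounds $\rho\le 2\bar\rho$ and the $A_i$ inequalities with constant $2$, I would derive the same inequalities with constant $1$ (and $\rho\le\tfrac{7}{4}\bar\rho$) by a carefully ordered sequence of a priori estimates. The natural order is (i) the basic energy estimate; (ii) the magnetic bound $A_3$, exploiting smallness of $\|H_0\|_{L^2}$; (iii) the short-time bounds $A_4,A_5$ on $[0,\sigma(T)]$; (iv) the material-derivative estimates $A_1,A_2$ on all of $[0,T]$; and (v) the pointwise density upper bound via the effective viscous flux and Zlotnik's inequality. Each step uses the hypotheses of the previous step strictly, so choosing the smallness thresholds $\varepsilon_1<\dots<\varepsilon_5$ in reverse order closes the scheme.

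For \textbf{step (i)--(ii)}, I would multiply $(\ref{2dbu-E1.1})_2$ by $u$ and $(\ref{2dbu-E1.1})_3$ by $H$ and add; the coupling terms $(\nabla\times H)\times H\cdot u$ and $-\nabla\times(u\times H)\cdot H$ cancel, yielding the total energy $\leq E_0$. For $A_3$, I would multiply $(\ref{2dbu-E1.1})_3$ by $3|H|H$: the diffusive part gives $\int|H||\nabla H|^2$ up to constants, and the convective/stretching piece $\nabla\times(u\times H)\cdot|H|H$ is dominated in $L^1_tL^1_x$ by $\|\nabla u\|_{L^2}\,\|H\|_{L^6}^{3/2}\|H\|_{L^2}^{3/2}$-type products; using (\ref{2dbu-E2.01}) and the hypothesis $\|H_0\|_{L^2}^2\le(\gamma-1)^{1/6}E_0^{1/2}$, these are absorbed into the diffusion plus a small multiple of $\varepsilon^{1/9}$.

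For \textbf{step (iii)--(iv)}, which is the bulk of the work, I would follow the Hoff--Huang--Li pattern, but replacing ``small initial energy'' by ``smallness of $(\gamma-1)^{1/6}E_0^{1/2}$'' wherever it appears. Concretely: test $(\ref{2dbu-E1.1})_2$ against $\dot u$ to get $\tfrac{d}{dt}\int((2\mu+\lambda)(\mathrm{div}\,u)^2+\mu|\omega|^2)+\int\rho|\dot u|^2$, plus pressure and Lorentz remainders $H\cdot\nabla H\cdot\dot u$ and $\tfrac12\nabla|H|^2\cdot\dot u$; then apply $\partial_t+u\cdot\nabla$ to $(\ref{2dbu-E1.1})_2$ and test against $\dot u$ to get the $L^2_tL^2_x$ bound on $\nabla\dot u$. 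The magnetic analogues come from testing $H_t$ against $(\ref{2dbu-E1.1})_3$ and then $\partial_tH_t$. All ``bad'' nonlinearities are handled through Lemma \ref{2dbu-L2.2}: the effective viscous flux $G$ and vorticity $\omega$ give $\|\nabla u\|_{L^p}$ in terms of $\|\rho\dot u\|_{L^2}$, $\|P\|_{L^p}$, and $\|H\cdot\nabla H\|_{L^2}$, which I would iterate to trade factors of $\|\nabla u\|_{L^2}^4\in L^1_t$ (the ``key estimate'' highlighted in the paper) for smallness. The key structural point that makes the closure work is that $\rho$ and $H$ never multiply each other in $(\ref{2dbu-E1.1})$, so smallness of $\gamma-1$ absorbs the pressure-generated largeness in $E_0$ without interfering with the $H$-bounds.

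\textbf{Step (v)} and the \textbf{main obstacle}: with $A_1,A_2,A_3$ controlled, I rewrite $(\ref{2dbu-E1.1})_1$ along particle trajectories as $D_t\log\rho+\tfrac{1}{2\mu+\lambda}P(\rho)=-\tfrac{1}{2\mu+\lambda}\big(G+\tfrac12|H|^2\big)$ and apply Lemma 2.3 with $g(\xi)=-(2\mu+\lambda)^{-1}P(\xi)$ and $b(t)=-\int_0^t(2\mu+\lambda)^{-1}(G+\tfrac12|H|^2)\,ds$; the Zlotnik hypothesis $b(t_2)-b(t_1)\le N_0+N_1(t_2-t_1)$ requires an $L^\infty$ bound on $\|G\|_{L^\infty}+\||H|^2\|_{L^\infty}$ on $[\sigma(T),T]$ (easy from $A_2,A_3$ via Gagliardo--Nirenberg) and, on $[0,\sigma(T)]$, an integral bound in terms of $A_4,A_5$. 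The delicate part is that to bound $\int_0^{\sigma(T)}\|G\|_{L^\infty}\,ds$ one needs smallness of $\int_0^{\sigma(T)}\|\nabla u\|_{L^2}^2\,dt$; unlike in \cite{H.L. L} this is not given by a small-energy hypothesis, and this is precisely where I expect the main obstacle. I would overcome it by combining the smallness of $\|H_0\|_{L^2}$ with the cancellation structure above to show $\int_0^{\sigma(T)}\|\nabla u\|_{L^2}^2\,dt\le C\varepsilon^{1/9}$ through the $A_4,A_5$ machinery, analogous to the new estimates the paper labels (3.0117) and (3.122). Once $N_0,N_1$ are made $\le (\gamma-1)\bar\rho^\gamma/(8(2\mu+\lambda))$, Zlotnik gives $\rho\le\tfrac74\bar\rho$, and choosing $\varepsilon$ below all the thresholds $\varepsilon_1,\dots,\varepsilon_5$ simultaneously halves the $A_i$ bounds, completing the bootstrap.
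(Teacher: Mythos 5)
Your proposal follows essentially the same route as the paper: the same bootstrap ordering (energy and $H$-estimates exploiting $\|H_0\|_{L^2}$ small, then the $\dot u$-energy and material-derivative estimates closed via the elliptic bounds on $G$ and $\omega$, then Zlotnik with $N_0$ controlled separately on $[0,\sigma(T)]$ and $[\sigma(T),T]$), and it correctly identifies both the key structural point (no direct $\rho$--$H$ coupling, so $\gamma-1$ smallness handles the pressure while $\|H_0\|_{L^2}$ smallness handles the magnetic terms) and the main obstacle (smallness of $\int_0^{\sigma(T)}\|\nabla u\|_{L^2}^2$, which the paper obtains in Lemma \ref{2dbu-L3.5} together with the $L^1_t$ bound on $\|\nabla u\|_{L^2}^4$ of Lemma \ref{2dbu-L3.02}). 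This matches the paper's proof via Lemmas \ref{2dbu-L3.1}--\ref{2dbu-L3.10}.
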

\begin{proof}
 Proposition \ref{2dbu-P3.1} can be derived from Lemmas \ref{2dbu-L3.1}-\ref{2dbu-L3.10} below.
\end{proof}
\begin{lem}\label{2dbu-L3.1}
Under the same assumption as in Proposition \ref{2dbu-P3.1}, we have
\begin{align}
 &\D \sup_{0\leq t\leq T}\intt P\leq(\gamma-1)E_{0},\label{2dbu-E3.03}\\
 &\D \int_{0}^{T}\intt|\nabla u|^{2}\leq \frac{E_{0}}{\mu}.\label{2dbu-E3.04}
\end{align}
\end{lem}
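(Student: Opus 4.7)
The plan is to derive the standard basic energy identity for the MHD system \eqref{2dbu-E1.1} and read both estimates directly from it. All the ingredients are classical, so I would move through them quickly and only be careful about the sign bookkeeping when the two magnetic coupling terms cancel.

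First, I would test the momentum equation against $u$ and integrate over $\mathbb{R}^{3}$. Using the continuity equation, the convective part rewrites as $\frac{d}{dt}\intt\frac{1}{2}\rho|u|^{2}$. Integration by parts on the viscous terms produces $\mu\|\nabla u\|_{L^{2}}^{2}+(\mu+\lambda)\|\mdiv u\|_{L^{2}}^{2}$. The pressure contribution $-\intt P\mdiv u$ pairs with the identity $\frac{d}{dt}\intt\frac{1}{\gamma-1}\rho^{\gamma}=-\intt P\mdiv u$, which is obtained by multiplying the continuity equation by $\frac{\gamma}{\gamma-1}\rho^{\gamma-1}$. Finally, the Lorentz force decomposes as $(\nabla\times H)\times H=(H\cdot\nabla)H-\frac{1}{2}\nabla|H|^{2}$; pairing it against $u$ and integrating by parts (using $\mdiv H=0$ to kill the $(\mdiv H)(H\cdot u)$ term) gives $-\intt(H\otimes H):\nabla u+\frac{1}{2}\intt|H|^{2}\mdiv u$ on the right-hand side.

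Next, I would test the induction equation against $H$. Since $\mdiv H=0$, one has $\nabla\times(\nu\nabla\times H)=-\nu\Delta H$, contributing $\nu\|\nabla H\|_{L^{2}}^{2}$ after integration by parts. Expanding $\nabla\times(u\times H)=(H\cdot\nabla)u-(u\cdot\nabla)H-H\,\mdiv u$ and pairing with $H$, together with one more integration by parts on the transport term $H\cdot(u\cdot\nabla)H=\frac{1}{2}u\cdot\nabla|H|^{2}$, produces the magnetic work $\intt(H\otimes H):\nabla u-\frac{1}{2}\intt|H|^{2}\mdiv u$, which is exactly the negative of the magnetic contribution appearing in the momentum balance.

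Summing the two identities the coupling terms cancel, and I arrive at the conservation law
\begin{align*}
\frac{d}{dt}\intt\left(\tfrac{1}{2}\rho|u|^{2}+\tfrac{1}{\gamma-1}\rho^{\gamma}+\tfrac{1}{2}|H|^{2}\right)+\intt\left(\mu|\nabla u|^{2}+(\mu+\lambda)(\mdiv u)^{2}+\nu|\nabla H|^{2}\right)=0.
\end{align*}
Integrating in time and invoking the definition of $E_{0}$, I can discard the nonnegative terms I do not need: keeping only $\intt\frac{1}{\gamma-1}\rho^{\gamma}(t)$ on the left yields $\intt P(t)=\intt\rho^{\gamma}(t)\leq(\gamma-1)E_{0}$, which is \eqref{2dbu-E3.03}; keeping only $\mu\int_{0}^{T}\|\nabla u\|_{L^{2}}^{2}$ yields \eqref{2dbu-E3.04}. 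The argument is entirely elementary; the only place requiring any care is the sign tracking in the $u$–$H$ coupling, which is forced by $\mdiv H=0$, and I do not foresee any real obstacle.
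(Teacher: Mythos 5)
Your proposal is correct and follows essentially the same route as the paper: multiply the continuity, momentum, and induction equations by $\frac{\gamma\rho^{\gamma-1}}{\gamma-1}$ (equivalently, use the pressure identity), $u$, and $H$ respectively, sum so that the magnetic coupling terms cancel, and read both estimates off the resulting energy identity. No issues.
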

\begin{proof}
Multiplying $(\ref{2dbu-E1.1})_{1}$, $(\ref{2dbu-E1.1})_{2}$ and $(\ref{2dbu-E1.1})_{3}$ by $\frac{\gamma\rho^{\gamma}}{\gamma-1}$, $u$
 and $H$, respectively, and integrating the resulting equation over $\mathbb{R}^{3}\times (0,T]$, we have
 \begin{align}
  \D\sup_{0\leq t\leq T} \intt\Big(\frac{P}{\gamma-1}&+\frac{1}{2}\rho|u|^{2}+\frac{1}{2}|H|^{2}\Big)\non\\
  &\D+\int_{0}^{T}\intt \left(\mu|\nabla u|^{2}+(\mu+\lambda)|\mathrm{div} u|^{2}+\nu|\nabla H|^{2}\right)\leq E_{0},
\end{align}
which gives (\ref{2dbu-E3.03}) and (\ref{2dbu-E3.04}).
\end{proof}
\begin{lem}\label{2dbu-L3.02}
Under the same assumption as in Proposition \ref{2dbu-P3.1}, it holds that
\begin{align}
 \D \int_{0}^{T}\|\nabla u\|_{L^{2}}^{4}&\leq CK_{1}\left((\gamma-1)^{\frac{1}{6}}E_{0}^{\frac{1}{2}}\right)^{\frac{1}{9}},
\end{align}
 where $\D K_{1}=\left(1+(\gamma-1)^{\frac{1}{144}}\right)$.
\end{lem}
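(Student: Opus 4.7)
The plan is to split
\[
\int_0^T \|\nabla u\|_{L^2}^4\,dt = \int_0^{\sigma(T)} \|\nabla u\|_{L^2}^4\,dt + \int_{\sigma(T)}^T \|\nabla u\|_{L^2}^4\,dt,
\]
handling the short-time and long-time pieces with different a priori bounds (the second integral being empty when $T \leq 1$). I write $\varepsilon_0 := (\gamma-1)^{1/6} E_0^{1/2}$, so the standing assumptions (\ref{2dbu-E3.02}) yield $A_4(\sigma(T)) \leq 2\varepsilon_0^{1/9}$ and $A_1(T) \leq 2\varepsilon_0^{1/2}$.

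For the short-time piece, since $\sigma(t) = t$ on $[0,\sigma(T)]$, the definition of $A_4$ gives the pointwise control $\|\nabla u\|_{L^2}^2 \leq A_4\, t^{-1/3}$. Squaring and integrating against the integrable weight $t^{-2/3}$ produces $\int_0^{\sigma(T)}\|\nabla u\|_{L^2}^4\,dt \leq 3 A_4^2 \leq 12\,\varepsilon_0^{2/9}$, which the smallness $\varepsilon_0 \leq 1$ reduces to $\leq 12\,\varepsilon_0^{1/9}$.

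For the long-time piece (nontrivial only if $T > 1$), $\sigma(t) = 1$ on $[1,T]$, so the definition of $A_1$ improves to the uniform pointwise bound $\|\nabla u\|_{L^2}^2 \leq A_1 \leq 2\varepsilon_0^{1/2}$. Pulling this supremum out and invoking the basic energy estimate (\ref{2dbu-E3.04}) yields
\[
\int_{\sigma(T)}^T \|\nabla u\|_{L^2}^4\,dt \leq A_1 \int_0^T \|\nabla u\|_{L^2}^2\,dt \leq \frac{2}{\mu}\varepsilon_0^{1/2} E_0.
\]

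The main obstacle is rewriting this last quantity as $\varepsilon_0^{1/9}$ times a mild $(\gamma-1)$-power matching $K_1$. I would factor $\varepsilon_0^{1/2} E_0 = \varepsilon_0^{1/9}\cdot(\varepsilon_0^{7/18} E_0)$, substitute $\varepsilon_0 = (\gamma-1)^{1/6} E_0^{1/2}$ to eliminate the mixed factor, and use the hypothesis $(\gamma-1)^{1/24} E_0 \leq 1$ from (\ref{2dbu-E1.011}) to convert the remaining $E_0$-power into a pure $(\gamma-1)$-power. A routine exponent count then gives $\varepsilon_0^{7/18} E_0 \leq (\gamma-1)^{13/864}$, which is dominated by $(\gamma-1)^{1/144}$ since $\gamma-1 \in (0,1)$ and $13/864 > 1/144$. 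Summing the two pieces yields $\int_0^T\|\nabla u\|_{L^2}^4\,dt \leq C\bigl(1+(\gamma-1)^{1/144}\bigr)\varepsilon_0^{1/9} = C K_1\,\varepsilon_0^{1/9}$, as claimed. The only delicate step is this final exponent bookkeeping, which is precisely where the curious power $1/144$ in $K_1$ is generated; the analytic ingredients are nothing more than the pointwise consequences of $A_1$ and $A_4$ together with Lemma \ref{2dbu-L3.1}.
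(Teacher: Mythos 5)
Your proposal is correct and follows essentially the same route as the paper: the same split of the time integral at $\sigma(T)$, the same use of $A_4$ with the weight $t^{-2/3}$ on the short-time piece, and the same combination of $A_1$ with the basic energy bound (\ref{2dbu-E3.04}) on the long-time piece, followed by the exponent bookkeeping via $(\gamma-1)^{\frac{1}{24}}E_0\leq 1$. The only cosmetic difference is that you factor $\varepsilon_0^{1/2}E_0=\varepsilon_0^{1/9}\cdot\varepsilon_0^{7/18}E_0$ and land on $(\gamma-1)^{13/864}\leq(\gamma-1)^{1/144}$, whereas the paper factors off $\varepsilon_0^{1/6}$ and obtains $(\gamma-1)^{1/144}$ exactly; both yield the stated $K_1$.
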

\begin{proof}
\begin{align}
  \D \int_{0}^{T}\|\nabla u\|_{L^{2}}^{4}&\leq \int_{0}^{\sigma(T)}\|\nabla u\|_{L^{2}}^{4}+\int_{\sigma(T)}^{T}\sigma\|\nabla u\|_{L^{2}}^{4}\nonumber\\
  &\D\leq \sup_{0\leq t\leq \sigma(T)}\left(\sigma^{\frac{1}{3}}\|\nabla u\|_{L^{2}}^{2}\right)^{2}\int_{0}^{\sigma(T)}\sigma^{-\frac{2}{3}}\nonumber\\
  &\quad+C\bi(\sup_{ \sigma(T)\leq t\leq T}\sigma\|\nabla u\|_{L^{2}}^{2}\bi)\int_{\sigma(T)}^{T}\|\nabla u\|_{L^{2}}^{2}\nonumber\\
  &\D\leq C\left((\gamma-1)^{\frac{1}{6}}E_{0}^{\frac{1}{2}}\right)^{\frac{2}{9}}
  +\left((\gamma-1)^{\frac{1}{6}}E_{0}^{\frac{1}{2}}\right)^{\frac{1}{2}}E_{0}\nonumber\\
  &\D\leq C\left((\gamma-1)^{\frac{1}{6}}E_{0}^{\frac{1}{2}}\right)^{\frac{2}{9}}+
  C\left((\gamma-1)^{\frac{1}{6}}E_{0}^{\frac{1}{2}}\right)^{\frac{1}{6}}
  \left((\gamma-1)^{\frac{1}{6}}E_{0}^{\frac{1}{2}}\right)^{\frac{1}{3}}E_{0}\non\\
  &\D \leq C\left((\gamma-1)^{\frac{1}{6}}E_{0}^{\frac{1}{2}}\right)^{\frac{2}{9}}+ C\left(1+(\gamma-1)^{\frac{1}{144}}\right)\left((\gamma-1)^{\frac{1}{6}}E_{0}^{\frac{1}{2}}\right)^{\frac{1}{9}}\non\\
  &\D \leq K_{1}\left((\gamma-1)^{\frac{1}{6}}E_{0}^{\frac{1}{2}}\right)^{\frac{1}{9}},
\end{align}
here $(\gamma-1)^{\frac{1}{6}}E_{0}^{\frac{1}{2}}\leq 1$ and (\ref{2dbu-E1.011}) have been used. Lemma \ref{2dbu-L3.02} is proved.
\end{proof}
\begin{lem}\label{2dbu-L3.3}
Under the same assumption as in Proposition \ref{2dbu-P3.1}, it holds that
\begin{align}\label{2dbu-E3.08}
  \D \sup_{0\leq t\leq T}\Big(\|H\|_{L^{2}}^{2}&+\sigma\|\nabla H\|_{L^{2}}^{2}\Big)\non\\
  \D&+\int_{0}^{T}\left(\|\nabla H\|_{L^{2}}^{2}+\sigma\|H_{t}\|_{L^{2}}^{2}+\sigma\|\nabla^{2}H\|_{L^{2}}^{2}\right)\leq CK_{2}^{2}(\gamma-1)^{\frac{1}{6}}E_{0}^{\frac{1}{2}}
\end{align}
and
\begin{align}\label{2dbu-E3.9}
  \sup_{0\leq t\leq T}\|\nabla H\|_{L^{2}}^{2}+\int_{0}^{T}\left(\|H_{t}\|_{L^{2}}^{2}+\|\nabla^{2}H\|_{L^{2}}^{2}\right)\leq CK_{2}M_{2}.
\end{align}
where $K_{2}=e^{2K_{1}}$.
\end{lem}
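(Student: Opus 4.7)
The plan is to apply the standard energy method to the magnetic equation
\begin{equation*}
H_t - \nu\Delta H = \nabla\times(u\times H) = (H\cdot\nabla)u - (u\cdot\nabla)H - H\,\mdiv u,
\end{equation*}
(obtained from $(1.1)_3$ together with $\mdiv H=0$), closing everything by Gr\"onwall with the integrability $\int_0^T\|\nabla u\|_{L^2}^4 \leq CK_1$ furnished by Lemma \ref{2dbu-L3.02}.

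For the first two terms of \eqref{2dbu-E3.08}, I first take the $L^2$ inner product of the magnetic equation with $H$. After integrating by parts and using $\mdiv H=0$, all nonlinear contributions collapse to expressions bounded by $C\int |H|^2|\nabla u|\leq C\|H\|_{L^4}^2\|\nabla u\|_{L^2}$. Applying Gagliardo-Nirenberg $\|H\|_{L^4}^2\leq C\|H\|_{L^2}^{1/2}\|\nabla H\|_{L^2}^{3/2}$ (Lemma \ref{2dbu-L2.1}) and Young's inequality gives
\begin{equation*}
\frac{d}{dt}\|H\|_{L^2}^2 + \nu\|\nabla H\|_{L^2}^2 \leq C\|\nabla u\|_{L^2}^4\|H\|_{L^2}^2.
\end{equation*}
Gr\"onwall combined with Lemma \ref{2dbu-L3.02} and $\|H_0\|_{L^2}^2\leq (\gamma-1)^{1/6}E_0^{1/2}$ yields $\sup_t\|H\|_{L^2}^2 \leq CK_2(\gamma-1)^{1/6}E_0^{1/2}$, and integrating the differential inequality produces the corresponding bound on $\int_0^T\|\nabla H\|_{L^2}^2$, with the exponent $2$ in $K_2^2$ accounting for the product of the Gr\"onwall factor and the supremum.

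For the $\sigma$-weighted bounds in \eqref{2dbu-E3.08}, I would multiply the magnetic equation by $H_t$ to obtain
\begin{equation*}
\|H_t\|_{L^2}^2 + \frac{\nu}{2}\frac{d}{dt}\|\nabla H\|_{L^2}^2 \leq \tfrac12\|H_t\|_{L^2}^2 + \tfrac12\|\nabla\times(u\times H)\|_{L^2}^2.
\end{equation*}
The last term is estimated by splitting $\nabla\times(u\times H)$ into its three components and using Gagliardo-Nirenberg: $\|u\cdot\nabla H\|_{L^2}^2 \leq C\|\nabla u\|_{L^2}^2\|\nabla H\|_{L^2}\|\nabla^2H\|_{L^2}$ (from $\|u\|_{L^6}\|\nabla H\|_{L^3}$) and $\|H\cdot\nabla u\|_{L^2}^2, \|H\mdiv u\|_{L^2}^2 \leq C\|\nabla H\|_{L^2}^2\|\nabla u\|_{L^2}\|\nabla u\|_{L^6}$, with $\|\nabla u\|_{L^6}$ controlled via the elliptic estimate \eqref{2dbu-E2.06}. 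After multiplying by $\sigma$ and integrating in $t$, the $\sigma'$ contribution is absorbed by the already-obtained $L^2(0,T;\dot H^1)$-bound on $H$, and the $\|\nabla^2H\|_{L^2}^2$ factors are absorbed using the equation itself: $\nu^2\|\Delta H\|_{L^2}^2 \leq 2\|H_t\|_{L^2}^2 + 2\|\nabla\times(u\times H)\|_{L^2}^2$. A final Gr\"onwall using $\int_0^T\|\nabla u\|_{L^2}^4\leq CK_1$ closes the $\sigma$-weighted estimates, and the bound on $\int_0^T\sigma\|\nabla^2 H\|_{L^2}^2$ follows from the equation. For \eqref{2dbu-E3.9}, the identical $H_t$-multiplication is performed without the $\sigma$ weight, with $\|\nabla H_0\|_{L^2}^2\leq M_2$ serving as the initial datum and $K_2$ arising again as the Gr\"onwall factor.

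The main technical obstacle is the treatment of the coupling term $\|\nabla\times(u\times H)\|_{L^2}^2$: naive interpolations introduce either $\|\nabla^2 H\|_{L^2}$, which must be absorbed via the equation, or $\|\nabla u\|_{L^6}$, whose elliptic control by \eqref{2dbu-E2.06} forces the appearance of $\|\rho\dot u\|_{L^2}$ and hence the careful use of the assumed bound on $\int\sigma\|\rho^{1/2}\dot u\|_{L^2}^2$ in $A_1(T)$. Keeping all absorptions clean so that the final Gr\"onwall factor is only $\int_0^T\|\nabla u\|_{L^2}^4$ (and thus produces only $K_2=e^{2K_1}$) is the decisive point.
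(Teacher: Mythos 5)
Your overall architecture is the same as the paper's: an $L^2$ energy estimate for $H$ closed by Gr\"onwall against $\int_0^T\|\nabla u\|_{L^2}^4$, then an $H^1$-level parabolic estimate whose right-hand side is again $C\|\nabla u\|_{L^2}^4\|\nabla H\|_{L^2}^2$ plus an absorbable $\frac12\|\nabla^2H\|_{L^2}^2$, and finally the $\sigma$-weighted version obtained by multiplying the same differential inequality by $\sigma$ and paying the $\sigma'\|\nabla H\|_{L^2}^2$ term with the already-proved bound on $\int_0^T\|\nabla H\|_{L^2}^2$. Your first step and your treatment of $u\cdot\nabla H$ (via $\|u\|_{L^6}\|\nabla H\|_{L^3}\leq C\|\nabla u\|_{L^2}\|\nabla H\|_{L^2}^{1/2}\|\nabla^2H\|_{L^2}^{1/2}$ followed by Young) are fine and match the paper.

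The gap is in your estimate of $\|H\cdot\nabla u\|_{L^2}$ and $\|H\,\mdiv u\|_{L^2}$. Bounding them by $C\|\nabla H\|_{L^2}^2\|\nabla u\|_{L^2}\|\nabla u\|_{L^6}$ and invoking (\ref{2dbu-E2.06}) puts $\|\nabla u\|_{L^2}\|\rho\dot u\|_{L^2}$ into the Gr\"onwall coefficient, and this quantity is \emph{not} time-integrable with the information available at this point of the bootstrap: the a priori hypothesis (\ref{2dbu-E3.02}) only controls $\int_0^T\sigma\|\rho^{\frac12}\dot u\|_{L^2}^2$, while on $[0,\sigma(T)]$ the best bound on $\|\nabla u\|_{L^2}^2$ is $\sigma^{-1/3}A_4$, so $\int_0^{\sigma(T)}\|\nabla u\|_{L^2}\|\rho\dot u\|_{L^2}\leq(\int\sigma^{-1}\|\nabla u\|_{L^2}^2)^{1/2}(\int\sigma\|\rho\dot u\|_{L^2}^2)^{1/2}$ diverges like $\int_0^1 t^{-4/3}dt$. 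The unweighted bound $\int_0^{\sigma(T)}\|\rho\dot u\|_{L^2}^2\leq K_6$ only appears in Lemma \ref{2dbu-L3.7}, whose constant $K_6$ already contains $K_2M_2$ from the present lemma, so using it here would be circular. Consequently neither (\ref{2dbu-E3.9}) nor the weighted bounds in (\ref{2dbu-E3.08}) close along your route, and the stated Gr\"onwall factor $K_2=e^{2K_1}$ is unobtainable. The fix — which is what the paper's one-line inequality is implicitly doing — is to avoid $\|\nabla u\|_{L^6}$ altogether: write $\|H\cdot\nabla u\|_{L^2}\leq\|H\|_{L^\infty}\|\nabla u\|_{L^2}\leq C\|\nabla H\|_{L^2}^{1/2}\|\nabla^2H\|_{L^2}^{1/2}\|\nabla u\|_{L^2}$, so that after squaring and applying Young the $\|\nabla^2H\|_{L^2}^2$ piece is absorbed on the left and only $C\|\nabla u\|_{L^2}^4\|\nabla H\|_{L^2}^2$ survives as the Gr\"onwall coefficient, exactly as in (\ref{2dbu-E1.01}).
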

\begin{proof}
Multiplying $(\ref{2dbu-E1.1})_{3}$ by $H_{t}$, then integrating over $\mathbb{R}^{3}$, using $(\ref{2dbu-E1.1})_{4}$, $\mathrm{H\ddot{o}lder}$ inequality and Cauchy inequality, we get
\begin{align}
  \D \intt H\cdot H_{t}&=\intt \nabla\times(u\times H)\cdot H-\intt \nabla\times(\nu\nabla\times H)\cdot H\non\\[2mm]
  \D&=\intt\left[(H\cdot\nabla)u-(u\cdot\nabla)H-(\mdiv u)H\right]\cdot H+\intt\Delta H\cdot H\non\\[2mm]
  \D&\leq \intt |u||\nabla H||H|-\nu\intt |\nabla u|^{2}\non\\[2mm]
  \D&\D \leq C \|u\|_{L^{6}}\|\nabla H\|_{L^{2}}\|H\|_{L^{3}}-\nu\intt |\nabla u|^{2}\non\\[2mm]
  \D& \leq\epsilon \|\nabla H\|_{L^{2}}^{2}+C\|\nabla u\|_{L^{2}}^{4}\|H\|_{L^{2}}^{2}-\nu\intt |\nabla u|^{2},
\end{align}
which implies that
\begin{align}
  \rf \intt |H|^{2}+\intt \nu|\nabla H|^{2}\leq C\|\nabla u\|_{L^{2}}^{4}\|H\|_{L^{2}}^{2}.
\end{align}
An application of Gronwall's inequality leads to
\begin{align}\label{2dbu-E3.013}
  \D \|H\|_{L^{2}}^{2}+\int_{0}^{T}\nu\|\nabla H\|_{L^{2}}^{2}\leq CK_{2}\|H_{0}\|_{L^{2}}^{2}\leq Ce^{K_{1}}(\gamma-1)^{\frac{1}{6}}E_{0}^{\frac{1}{2}}.
\end{align}
Thanks to $(\ref{2dbu-E1.1})_{3}$ and Lemma \ref{2dbu-L2.1}, using integration by parts, we derive that
\begin{align}
  &\D\quad\rf\intt|\nabla H|^{2}+\intt|H_{t}|^{2}+\intt|\nabla^{2}H|^{2}\non\\
  &\D=\intt|H_{t}-\triangle H|^{2}\non\\
  &\D=\intt|H\cdot\nabla u-u\cdot\nabla H-H\mdiv u|^{2}\non\\
  &\D\leq C\|\nabla u\|_{L^{2}}^{4}\|\nabla H\|_{L^{2}}^{2}+\frac{1}{2}\|\nabla^{2}H\|_{L^{2}}^{2},
\end{align}
 we consequently have
\begin{align}\label{2dbu-E1.01}
  \rf\|\nabla H\|_{L^{2}}^{2}+\left(\|H_{t}\|_{L^{2}}^{2}+
  \|\nabla^{2}H\|_{L^{2}}^{2}\right)\leq C\|\nabla u\|_{L^{2}}^{4}\|\nabla H\|_{L^{2}}^{2}.
\end{align}
As before, Gronwall's inequality leads to
\begin{align}\label{2dbu-E3.016}
  \sup_{0\leq t\leq T}\|\nabla H\|_{L^{2}}^{2}+\int_{0}^{T}\left(\|H_{t}\|_{L^{2}}^{2}+\|\nabla^{2}H\|_{L^{2}}^{2}\right)\leq Ce^{K_{1}} M_{2}.
\end{align}
Multiplying (\ref{2dbu-E1.01}) by $ \sigma$, one has
\begin{align}
 \rf\left(\sigma\|\nabla H\|_{L^{2}}^{2}\right)+\sigma\left(\|H_{t}\|_{L^{2}}^{2}
 +\|\nabla^{2}H\|_{L^{2}}^{2}\right)\leq C\sigma\|\nabla u\|_{L^{2}}^{4}\|\nabla H\|_{L^{2}}^{2}
 +\sigma'\|\nabla H\|_{L^{2}}^{2}.
\end{align}
Again, using Gronwall's inequality, we get
\begin{align}\label{2dbu-E3.018}
  \D&\quad\sup_{0\leq t\leq T}\left(\sigma\|\nabla H\|_{L^{2}}^{2}\right)
  +\int_{0}^{T}\sigma\left(\|H_{t}\|_{L^{2}}^{2}+\|\nabla^{2}H\|_{L^{2}}^{2}\right)\non\\
  \D&\leq Ce^{K_{1}}\int_{0}^{T}\|\nabla H\|_{L^{2}}^{2}\non\\
   \D&\leq Ce^{2K_{1}}\|H_{0}\|_{L^{2}}^{2}.
\end{align}
Combining (\ref{2dbu-E3.013}), (\ref{2dbu-E3.016}) and (\ref{2dbu-E3.018}), we finish the proof of Lemma \ref{2dbu-L3.3}.
\end{proof}
\begin{lem}\label{2dbu-L3.4}
Under the same assumption as in Proposition \ref{2dbu-P3.1}, it holds that
\begin{align}\label{2dbu-E3.019}
 \D\sup_{0\leq t\leq T}\|H\|_{L^{3}}^{3}+\int_{0}^{T}\left(\||H|^{\frac{1}{2}}|\nabla H|\|_{L^{2}}^{2}+\|H\|_{L^{9}}^{3}\right)
 \leq \left((\gamma-1)^{\frac{1}{6}}E_{0}^{\frac{1}{2}}\right)^{\frac{1}{9}},
\end{align}
provided $\D (\gamma-1)^{\frac{1}{6}}E_{0}^{\frac{1}{2}}\leq \varepsilon_{1} $, where
\begin{align}
  \D \varepsilon_{1}=\min\left\{\left(4C(M_{2}^{\frac{3}{4}}+K_{1})\right)^{-9},1\right\}.
\end{align}
\end{lem}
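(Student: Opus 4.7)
The plan is to derive a weighted $L^3$-energy inequality for $H$ by multiplying the induction equation $(\ref{2dbu-E1.1})_3$ by $3|H|H$ and integrating over $\mathbb{R}^3$. Using $\mdiv H=0$ to rewrite $-\nabla\times(\nu\nabla\times H)=-\nu\Delta H$ and the identity $\nabla\times(u\times H)=(H\cdot\nabla)u-(u\cdot\nabla)H-(\mdiv u)H$, together with the computation $\intt 3|H|H\cdot\Delta H=-3\intt|H|(|\nabla H|^2+|\nabla|H||^2)$ and integration by parts on the convective term (producing a compensating $(\mdiv u)|H|^3$), I would obtain
$$
\rf\intt|H|^3+3\nu\intt|H|\bigl(|\nabla H|^2+|\nabla|H||^2\bigr)\le C\intt|H|^3|\nabla u|.
$$

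To control the right-hand side I would apply H\"older with exponents $(9,3,2)$ to get $\intt|H|^3|\nabla u|\le\|H\|_{L^9}^{9/4}\|H\|_{L^3}^{3/4}\|\nabla u\|_{L^2}$, then Young's inequality with exponents $4/3$ and $4$, producing $\varepsilon\|H\|_{L^9}^3+C_\varepsilon\|H\|_{L^3}^3\|\nabla u\|_{L^2}^4$. Applying the Sobolev inequality (\ref{2dbu-E2.01}) to $f=|H|^{3/2}$ yields $\|H\|_{L^9}^3\le C\||H|^{1/2}\nabla H\|_{L^2}^2$, so for $\varepsilon$ small the first piece is absorbed into the magnetic dissipation on the left, leaving
$$
\rf\|H\|_{L^3}^3+c\bigl(\||H|^{1/2}\nabla H\|_{L^2}^2+\|H\|_{L^9}^3\bigr)\le C\|H\|_{L^3}^3\|\nabla u\|_{L^2}^4.
$$

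Gronwall (in the form that simultaneously yields the supremum and the integrated dissipation) then controls both terms on the left of (\ref{2dbu-E3.019}) by $C\|H_0\|_{L^3}^3\exp\bigl(C\int_0^T\|\nabla u\|_{L^2}^4\bigr)$. Gagliardo--Nirenberg (\ref{2dbu-E2.01}) together with the hypotheses $\|H_0\|_{L^2}^2\le(\gamma-1)^{1/6}E_0^{1/2}$ and $\|\nabla H_0\|_{L^2}^2\le M_2$ gives $\|H_0\|_{L^3}^3\le CM_2^{3/4}\bigl((\gamma-1)^{1/6}E_0^{1/2}\bigr)^{3/4}$, while Lemma \ref{2dbu-L3.02} bounds the exponent by $CK_1\bigl((\gamma-1)^{1/6}E_0^{1/2}\bigr)^{1/9}$. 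Writing $\delta=(\gamma-1)^{1/6}E_0^{1/2}$ and choosing $\delta\le\varepsilon_1=\min\{1,(4C(M_2^{3/4}+K_1))^{-9}\}$, the $K_1$-term in $\varepsilon_1$ keeps the exponential bounded by an absolute constant while the $M_2^{3/4}$-term ensures $CM_2^{3/4}\delta^{3/4}\le\delta^{1/9}$, which closes (\ref{2dbu-E3.019}).

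The main obstacle I foresee is the precise calibration of the H\"older/Young split so that the Gronwall coefficient is exactly $\|\nabla u\|_{L^2}^4$: this is what makes Lemma \ref{2dbu-L3.02} applicable and what produces the sharp power $\delta^{1/9}$ in the conclusion. The apparent tension between $\|H_0\|_{L^3}^3\sim M_2^{3/4}\delta^{3/4}$ at time zero and the target growth $\delta^{1/9}$ is resolved only because $3/4>1/9$, which relies critically on the coupled smallness hypothesis $\|H_0\|_{L^2}^2\le\delta$ from (\ref{2dbu-E1.9}) rather than on any independent smallness of $\|H_0\|_{L^2}$.
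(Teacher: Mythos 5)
Your proposal is correct and follows essentially the same route as the paper: multiply the induction equation by $3|H|H$, absorb part of the weighted dissipation via H\"older--Young and the Sobolev bound $\|H\|_{L^{9}}^{3}\leq C\||H|^{\frac{1}{2}}\nabla H\|_{L^{2}}^{2}$, and close using $\|H_{0}\|_{L^{3}}^{3}\leq CM_{2}^{\frac{3}{4}}\bigl((\gamma-1)^{\frac{1}{6}}E_{0}^{\frac{1}{2}}\bigr)^{\frac{3}{4}}$ together with Lemma \ref{2dbu-L3.02}. The only (immaterial) difference is that you run a Gronwall argument with coefficient $\|\nabla u\|_{L^{2}}^{4}$, whereas the paper instead substitutes the bootstrap bound $A_{3}(T)\leq 2\bigl((\gamma-1)^{\frac{1}{6}}E_{0}^{\frac{1}{2}}\bigr)^{\frac{1}{9}}$ into the right-hand side before integrating; both choices close with the same $\varepsilon_{1}$.
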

\begin{proof}
 Multiplying $(\ref{2dbu-E1.1})_{3}$ by $3|H|H$, using integration by parts as in \cite{H.L. L}, we have
\begin{align}\label{2dbu-E3.021}
 \D&\quad\rf\intt |H|^{3}+3\nu\intt|H||\nabla H|^{2}+3\nu\intt |H||\nabla|H||^{2}\non\\
 \D&\leq \nu\intt|H||\nabla H|^{2}+\nu\intt |H||\nabla|H||^{2} +C\|\nabla u\|_{L^{2}}^{4}\|H\|_{L^{3}}^{3}.
\end{align}
Noticing that
\begin{align}
 \D &\|H\|_{L^{9}}^{3}\leq C\||H|^{\frac{3}{2}}\|_{L^{6}}^{2}\leq C\||\nabla H||H|^{\frac{1}{2}}\|_{L^{2}}^{2},\label{2dbu-E3.022}\\
 \D&\|H\|_{L^{\frac{9}{2}}}\leq C\|H\|_{L^{3}}^{\frac{1}{2}}\|H\|_{L^{9}}^{\frac{1}{2}}\leq C\|H\|_{L^{3}}^{\frac{1}{2}}\|\nabla |H|^{\frac{3}{2}}\|_{L^{2}}^{\frac{1}{3}},\label{2dbu-E3.023}
\end{align}
substituting (\ref{2dbu-E3.022}) and (\ref{2dbu-E3.023}) into (\ref{2dbu-E3.021}), using Cauchy inequality, we thus deduce that
\begin{align}\label{2dbu-E3.024}
  \D\rf\intt |H|^{3}+\intt|H||\nabla H|^{2}\leq C\left((\gamma-1)^{\frac{1}{6}}E_{0}^{\frac{1}{2}}\right)^{\frac{1}{9}}\|\nabla u\|_{L^{2}}^{4}.
\end{align}
Integrating (\ref{2dbu-E3.024}) over $[0,T]$, by the virtue of Sobolev embedding inequality, one can derived that
\begin{align}\label{2dbu-E3.025}
  &\quad\D\sup_{0\leq t\leq T}\|H\|_{L^{3}}^{3}+\int_{0}^{T}\intt|H||\nabla H|^{2}\non\\[2mm]
  &\D \leq C\|H_{0}\|_{L^{2}}^{\frac{3}{2}}\|H_{0}\|_{D^{1}}^{\frac{3}{2}}
  +CK_{1}\left((\gamma-1)^{\frac{1}{6}}E_{0}^{\frac{1}{2}}\right)^{\frac{2}{9}}\non\\[2mm]
  &\D\leq CM_{2}^{\frac{3}{4}}\left((\gamma-1)^{\frac{1}{6}}E_{0}^{\frac{1}{2}}\right)^{\frac{3}{4}}
  +CK_{1}\left((\gamma-1)^{\frac{1}{6}}E_{0}^{\frac{1}{2}}\right)^{\frac{2}{9}}\non\\[2mm]
  &\D \leq C(M_{2}^{\frac{3}{4}}+K_{1})\left((\gamma-1)^{\frac{1}{6}}E_{0}^{\frac{1}{2}}\right)^{^{\frac{2}{9}}}\non\\
  &\D\leq \left((\gamma-1)^{\frac{1}{6}}E_{0}^{\frac{1}{2}}\right)^{^{\frac{1}{9}}},
\end{align}
provided
\begin{align}
  \D (\gamma-1)^{\frac{1}{6}}E_{0}^{\frac{1}{2}}<\min\left\{\left(4C(M_{2}^{\frac{3}{4}}+K_{1})\right)^{-9},1\right\}\triangleq \varepsilon_{1}.
\end{align}
Then estimate (\ref{2dbu-E3.025}), together with (\ref{2dbu-E3.022}), yields (\ref{2dbu-E3.019}). This ends up the proof of Lemma \ref{2dbu-L3.4}.
\end{proof}
\begin{lem}\label{2dbu-L3.5}
Under the same assumption as in Lemma \ref{2dbu-L3.4}, we have
\begin{align}\label{2dbu-E3.028}
  &\D\quad\sup_{0\leq t\leq \sigma(T)}\|\rho^{\frac{1}{2}}u\|_{L^{2}}^{2}
  +\int_{0}^{\sigma(T)}\intt|\nabla u|^{2}
  \leq  CK_{3}(\gamma-1)^{\frac{1}{6}}E_{0}^{\frac{1}{2}},
\end{align}
where $\D K_{3}=C(\bar{\rho})\left(1+(\gamma-1)^{\frac{4}{9}}+(\gamma-1)^{\frac{2}{3}}\right)$.
\end{lem}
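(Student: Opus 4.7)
The plan is to perform a basic kinetic-energy estimate on the short initial window $[0,\sigma(T)]$. I would test the momentum equation $(\ref{2dbu-E1.1})_2$ against $u$, integrate over $\mathbb{R}^3$, and use integration by parts to obtain
\begin{align*}
\frac{1}{2}\frac{d}{dt}\|\rho^{\frac{1}{2}}u\|_{L^2}^2+\mu\|\nabla u\|_{L^2}^2+(\mu+\lambda)\|\mathrm{div}\,u\|_{L^2}^2=\int P\,\mathrm{div}\,u+\int u\cdot[(\nabla\times H)\times H].
\end{align*}

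For the pressure term, I would apply Cauchy--Schwarz and Young to get $\bigl|\int P\,\mathrm{div}\,u\bigr|\leq\epsilon\|\nabla u\|_{L^2}^2+C_\epsilon\|P\|_{L^2}^2$, and combine the density upper bound $\rho\leq 2\bar\rho$ with Lemma \ref{2dbu-L3.1} to conclude $\|P\|_{L^2}^2\leq\|P\|_{L^\infty}\|P\|_{L^1}\leq(2\bar\rho)^\gamma(\gamma-1)E_0\leq C(\bar\rho)(\gamma-1)E_0$. For the Lorentz force term, I would exploit the identity $(\nabla\times H)\times H=(H\cdot\nabla)H-\frac{1}{2}\nabla|H|^2$ and integrate by parts using $\mathrm{div}\,H=0$, reducing the expression to $\int|H|^2|\nabla u|$-type terms that are dominated by $C\|H\|_{L^4}^2\|\nabla u\|_{L^2}\leq\epsilon\|\nabla u\|_{L^2}^2+C\|H\|_{L^4}^4$.

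To control $\|H\|_{L^4}^4$, I would interpolate via Lemma \ref{2dbu-L2.1} as $\|H\|_{L^4}^4\leq C\|H\|_{L^2}\|\nabla H\|_{L^2}^3$, and then insert the bounds from Lemma \ref{2dbu-L3.3}: $\sup_t\|H\|_{L^2}^2\leq CK_2^2(\gamma-1)^{\frac{1}{6}}E_0^{\frac{1}{2}}$, $\sup_t\|\nabla H\|_{L^2}^2\leq CK_2 M_2$, and $\int_0^T\|\nabla H\|_{L^2}^2\leq CK_2^2(\gamma-1)^{\frac{1}{6}}E_0^{\frac{1}{2}}$. Pulling out one sup in time leaves a time integral controlled by the dissipation estimate, yielding $\int_0^{\sigma(T)}\|H\|_{L^4}^4\leq C(\bar\rho)K_2^{7/2}M_2^{1/2}(\gamma-1)^{\frac{1}{4}}E_0^{\frac{3}{4}}$.

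Finally, I would choose $\epsilon$ small enough to absorb the dissipation terms into the LHS, integrate on $[0,\sigma(T)]$ (using $\sigma(T)\leq 1$), and convert the residual factors $(\gamma-1)E_0$, $(\gamma-1)^{\frac{1}{4}}E_0^{\frac{3}{4}}$, etc., into expressions of the form $(\gamma-1)^{\alpha}\cdot(\gamma-1)^{\frac{1}{6}}E_0^{\frac{1}{2}}$ by systematic use of the a priori bound $(\gamma-1)^{\frac{1}{24}}E_0\leq 1$ from (\ref{2dbu-E1.011}). The main obstacle is the bookkeeping required to produce the precise exponents $\frac{4}{9}$ and $\frac{2}{3}$ that appear in $K_3=C(\bar\rho)(1+(\gamma-1)^{\frac{4}{9}}+(\gamma-1)^{\frac{2}{3}})$: each of the three summands should correspond, respectively, to the straightforward energy contribution, to the pressure contribution $(\gamma-1)E_0$ re-expressed via (\ref{2dbu-E1.011}), and to the magnetic contribution $(\gamma-1)^{1/4}E_0^{3/4}$, with the initial kinetic data absorbed into one of these powered terms through the same interpolation mechanism.
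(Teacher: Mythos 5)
Your overall strategy coincides with the paper's: test $(\ref{2dbu-E1.1})_{2}$ with $u$, integrate over $[0,\sigma(T)]$, bound the pressure term through $\|P\|_{L^{2}}^{2}\leq C(\bar{\rho})(\gamma-1)E_{0}$, and control the Lorentz force by magnetic quantities from Lemma \ref{2dbu-L3.3}. Your route for the magnetic term, via $\|H\|_{L^{4}}^{4}\leq C\|H\|_{L^{2}}\|\nabla H\|_{L^{2}}^{3}$ and $\int_{0}^{\sigma(T)}\|H\|_{L^{4}}^{4}\leq CK_{2}^{7/2}M_{2}^{1/2}(\gamma-1)^{\frac{1}{4}}E_{0}^{\frac{3}{4}}$, is a harmless variant of the paper's estimate (which uses $\|u\|_{L^{6}}\|H\|_{L^{3}}\|\nabla H\|_{L^{2}}$ together with the a priori bound on $A_{3}$), and since $(\gamma-1)^{\frac{1}{12}}E_{0}^{\frac{1}{4}}\leq(\gamma-1)^{\frac{1}{12}-\frac{1}{96}}\leq 1$ this contribution is indeed of size $C(\gamma-1)^{\frac{1}{6}}E_{0}^{\frac{1}{2}}$ with constants the paper allows.

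The genuine gap is the initial kinetic energy $\frac{1}{2}\|\rho_{0}^{\frac{1}{2}}u_{0}\|_{L^{2}}^{2}$ produced by the time integration, which you dismiss as ``absorbed \ldots through the same interpolation mechanism,'' i.e.\ through $(\gamma-1)^{\frac{1}{24}}E_{0}\leq 1$. That cannot work: the trivial bound is $\frac{1}{2}\int\rho_{0}|u_{0}|^{2}\leq E_{0}$, and $E_{0}=(\gamma-1)^{\frac{1}{6}}E_{0}^{\frac{1}{2}}\cdot(\gamma-1)^{-\frac{1}{6}}E_{0}^{\frac{1}{2}}$, where $(\gamma-1)^{-\frac{1}{6}}E_{0}^{\frac{1}{2}}$ may be as large as $(\gamma-1)^{-\frac{1}{6}-\frac{1}{48}}\to\infty$ as $\gamma\to 1$; so $E_{0}$ alone is not of the required size $CK_{3}(\gamma-1)^{\frac{1}{6}}E_{0}^{\frac{1}{2}}$ — this is exactly the point of the theorem, that $E_{0}$ itself need not be small. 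The paper closes this step by a mechanism your proposal never invokes: H\"older plus Sobolev,
\begin{align*}
\int\rho_{0}|u_{0}|^{2}\leq\|\rho_{0}\|_{L^{\frac{3}{2}}}\|u_{0}\|_{L^{6}}^{2}
\leq C(\bar{\rho})M_{1}\left((\gamma-1)E_{0}\right)^{\frac{2}{3}},
\end{align*}
using $\rho_{0}\leq\bar{\rho}$, $1<\gamma\leq\frac{3}{2}$ so that $\|\rho_{0}\|_{L^{3/2}}^{3/2}\leq\bar{\rho}^{\frac{3}{2}-\gamma}\int\rho_{0}^{\gamma}\leq C(\bar{\rho})(\gamma-1)E_{0}$, and the hypothesis $\|\nabla u_{0}\|_{L^{2}}^{2}\leq M_{1}$ (note $M_{1}$ appears nowhere in your write-up). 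This is where the factor $(\gamma-1)^{\frac{4}{9}}$ in $K_{3}$ actually originates, since $((\gamma-1)E_{0})^{\frac{2}{3}}\leq(\gamma-1)^{\frac{1}{2}-\frac{1}{144}}(\gamma-1)^{\frac{1}{6}}E_{0}^{\frac{1}{2}}\leq(\gamma-1)^{\frac{4}{9}}(\gamma-1)^{\frac{1}{6}}E_{0}^{\frac{1}{2}}$; the $(\gamma-1)^{\frac{2}{3}}$ comes from the pressure term $(\gamma-1)E_{0}$, and the magnetic term only fits the summand $1$ (it carries merely $(\gamma-1)^{\frac{7}{96}}$, not $(\gamma-1)^{\frac{2}{3}}$). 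Your proposed matching of exponents to terms is therefore also off, though that part is cosmetic since $K_{3}$ is a sum; the missing $\|\rho_{0}\|_{L^{3/2}}$--$M_{1}$ splitting is not, and without it the estimate does not close.
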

\begin{proof}
Multiplying $(1.2)$ by $u$ and then integrating the resulting equality over $\mathbb{R}^{3}$, and using integration by parts, we have
\begin{align}\label{2dbu-E3.027}
  \D&\quad \frac{1}{2}\rf\intt \rho|u|^{2}+\intt\left(\mu|\nabla u|^{2}+(\lambda+\mu)|\mdiv u|^{2}\right)\non\\
  \D&=\intt P\mdiv u+\intt\left(H\cdot\nabla H-\frac{1}{2}\nabla|H|^{2}\right)u.
\end{align}
Integrating (\ref{2dbu-E3.027}) over $(0,\sigma(T))$, one has
\begin{align}
  &\D\quad\frac{1}{2}\sup_{0\leq t\leq \sigma(T)}\|\rho^{\frac{1}{2}}u\|_{L^{2}}^{2}
  +\int_{0}^{\sigma(T)}\intt\left(\frac{\mu}{2}|\nabla u|^{2}
  +(\mu+\lambda)|\mdiv u|^{2}\right)\non\\[2mm]
  &\D\leq \frac{1}{2}\intt\rho_{0}|u_{0}|^{2}+\int_{0}^{\sigma(T)}\intt P\mdiv u
  +\int_{0}^{\sigma(T)}\intt\left[(H\cdot\nabla H-\frac{1}{2}\nabla|H|^{2})u\right]\non\\[2mm]
  &\D\leq C\|\rho_{0}\|_{L^{\frac{3}{2}}}\|u_{0}\|_{L^{6}}
  +\frac{\mu}{6}\int_{0}^{\sigma(T)}\intt|\nabla u|^{2}+C\int_{0}^{\sigma(T)}\intt P^{2}
  +\int_{0}^{\sigma(T)}\|u\|_{L^{6}}\|H\|_{L^{3}}\|\nabla H\|_{L^{2}}^{2}\non\\[2mm]
  &\D\leq C\left((\gamma-1)E_{0}\right)^{\frac{2}{3}}
  +\frac{\mu}{4}\int_{0}^{\sigma(T)}\intt|\nabla u|^{2}+C(\bar{\rho})(\gamma-1)E_{0}+
  \int_{0}^{\sigma(T)}\|H\|_{L^{3}}^{2}\|\nabla H\|_{L^{2}}^{4}dt\non\\[2mm]
  &\D\leq C\left((\gamma-1)E_{0}\right)^{\frac{2}{3}}
  +\frac{\mu}{4}\int_{0}^{\sigma(T)}\intt|\nabla u|^{2}+C(\bar{\rho})(\gamma-1)E_{0}+
 C\left((\gamma-1)^{\frac{1}{6}}E_{0}^{\frac{1}{2}}\right)^{\frac{2}{27}+1}.
\end{align}
It thus holds that
\begin{align}
  &\D\quad\frac{1}{2}\sup_{0\leq t\leq \sigma(T)}\|\rho^{\frac{1}{2}}u\|_{L^{2}}^{2}
  +\int_{0}^{\sigma(T)}\intt\left(\frac{\mu}{2}|\nabla u|^{2}+(\mu+\lambda)|\mdiv u|^{2}\right)\non\\
  &\D\leq C\left((\gamma-1)E_{0}\right)^{\frac{2}{3}}+C(\bar{\rho})(\gamma-1)E_{0}+
 C\left((\gamma-1)^{\frac{1}{6}}E_{0}^{\frac{1}{2}}\right)^{\frac{2}{27}+1}\non\\
 &\D\leq  CK_{3}(\gamma-1)^{\frac{1}{6}}E_{0}^{\frac{1}{2}}.
\end{align}
Here we have used the condition $(\gamma-1)^{\frac{1}{6}}E_{0}^{\frac{1}{2}}\leq 1$. We finish the proof of Lemma \ref{2dbu-L3.5}.
\end{proof}
\begin{lem}\label{2dbu-L3.6}
Under the same assumption as in Proposition \ref{2dbu-P3.1}, it holds that
\begin{align}\label{2dbu-E3.030}
  \D A_{1}(T)&\leq CK_{4}(\gamma-1)^{\frac{1}{6}}E_{0}^{\frac{1}{2}}
  +C\int_{0}^{\sigma(T)}\intt|\nabla u|^{2}\non\\
  &\D\quad +C\int_{0}^{T}\sigma\intt|\nabla u|^{3}
  +C\gamma\int_{0}^{T}\sigma\intt P|\nabla u|^{2}
\end{align}
and
\begin{align}\label{2dbu-E3.33}
  \D A_{2}(T)&\leq CK_{5}\left((\gamma-1)^{\frac{1}{6}}E_{0}^{\frac{1}{2}}\right)^{\frac{11}{18}
  }+C\gamma^{2}\int_{0}^{T}\intt\sigma^{2}|P\nabla u|^{2}\non\\
  &\D\quad+C\int_{0}^{T}\intt\sigma^{2}|\nabla u|^{4}+CA_{1}(T),
 \end{align}
 provided that
 \begin{align}
   \D (\gamma-1)^{\frac{1}{6}}E_{0}^{\frac{1}{2}}\leq \varepsilon_{2}\triangleq \min\left\{\varepsilon_{1},(4C_{1})^{-\frac{27}{2}}+(4C_{2})^{-27}\right\},
 \end{align}
 where $K_{4}$ and $K_{5}$ are given by
 \begin{align}
  \D K_{4}&=K_{2}^{5}M_{2}+K_{2}^{2}+K_{2}^{2}K_{1},\\
  \D K_{5}&=K_{1}+K_{2}^{2}(\gamma-1)^{\frac{2}{9}}+K_{2}^{3}M_{2}.
 \end{align}
\end{lem}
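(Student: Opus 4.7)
The plan is to derive the two estimates in parallel by Hoff-type energy arguments based on the material derivative $\dot u = u_t + u\cdot\nabla u$. For $A_1(T)$, I rewrite the momentum equation as $\rho\dot u = -\nabla P + H\cdot\nabla H - \tfrac12\nabla|H|^2 + \mu\Delta u + (\mu+\lambda)\nabla\mdiv u$ and test it against $\dot u$. Integration by parts on the viscous term yields $-\rf\!\int(\tfrac\mu2|\nabla u|^2 + \tfrac{\mu+\lambda}2(\mdiv u)^2)$ plus a cubic remainder of the form $\int\partial_j u^i\,\partial_j u^k\,\partial_k u^i$ coming from the convective part of $\dot u$; the pressure term yields $\rf\!\int P\mdiv u$ plus a remainder proportional to $\gamma\int P|\nabla u|^2$ (via $P_t=-\mdiv(Pu)-(\gamma-1)P\mdiv u$); the magnetic term is rewritten via $(\nabla\times H)\times H=\mdiv(H\otimes H)-\tfrac12\nabla|H|^2$ and processed by integration by parts combined with the induction equation so as to extract exact time derivatives of magnetic energy quantities plus remainders controlled via $\|H\|_{L^3}$, $\|\nabla H\|_{L^2}$ and $\|H_t\|_{L^2}$.

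I then multiply the resulting pointwise-in-$t$ identity by $\sigma(t)$ and integrate on $[0,T]$. The weight derivative $\sigma'(t)=\mathbf 1_{[0,1]}(t)$ produces the integral $C\int_0^{\sigma(T)}\|\nabla u\|_{L^2}^2$ appearing in (\ref{2dbu-E3.030}); the cubic and pressure remainders produce $C\int_0^T\sigma\int|\nabla u|^3$ and $C\gamma\int_0^T\sigma\int P|\nabla u|^2$. All purely magnetic contributions and the initial-data boundary term are absorbed into $CK_4(\gamma-1)^{1/6}E_0^{1/2}$ via Lemmas \ref{2dbu-L3.3}--\ref{2dbu-L3.4} (the constant $K_4=K_2^5M_2+K_2^2+K_2^2K_1$ collects precisely the weights arising from three distinct testings of the induction equation). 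The $\sigma\|\nabla H\|_{L^2}^2$, $\sigma\|\nabla^2 H\|_{L^2}^2$ and $\sigma\|H_t\|_{L^2}^2$ components of $A_1(T)$ are then closed by testing $(\ref{2dbu-E1.1})_3$ against $-\Delta H$ and against $H_t$ with weight $\sigma$, re-using the $\|\nabla u\|_{L^2}^4\|\nabla H\|_{L^2}^2$-type bound already exploited in Lemma \ref{2dbu-L3.3}.

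For $A_2(T)$, I apply the operator $\partial_t+\mdiv(u\,\cdot)$ to the momentum equation and test the result against $\sigma^2\dot u$. The leading piece is $\tfrac12\rf\!\int\sigma^2\rho|\dot u|^2+\mu\int\sigma^2|\nabla\dot u|^2$ on the left-hand side, while the commutators between $\partial_t+\mdiv(u\,\cdot)$ and the viscous/pressure/magnetic terms produce quartic velocity remainders of the form $\int\sigma^2|\nabla u|^4$ and pressure-weighted remainders of the form $\gamma^2\int\sigma^2|P\nabla u|^2$, matching the two integrals on the right-hand side of (\ref{2dbu-E3.33}). The factor $2\sigma\sigma'$ produced by the time derivative of the weight gives rise to $CA_1(T)$. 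The magnetic part of $A_2$ is obtained by testing $(\ref{2dbu-E1.1})_3$ against $\sigma^2(-\Delta H)$ and by differentiating $(\ref{2dbu-E1.1})_3$ in time before testing against $\sigma^2 H_t$, controlling $\sup\sigma^2(\|\nabla^2 H\|_{L^2}^2+\|H_t\|_{L^2}^2)$ and $\int\sigma^2\|\nabla H_t\|_{L^2}^2$ after absorbing the nonlinear right-hand side via the a priori hypothesis (\ref{2dbu-E3.02}) and Lemma \ref{2dbu-L3.3}; the constant $K_5=K_1+K_2^2(\gamma-1)^{2/9}+K_2^3M_2$ tracks these three absorptions.

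The main obstacle will be controlling the mixed cross terms produced by the magnetic coupling when the material derivative is commuted into the momentum equation: products involving $\nabla u$, $H$, $\nabla H$, $H_t$, $\dot u$ and $\nabla\dot u$ must be routed through the Gagliardo--Nirenberg inequalities (Lemma \ref{2dbu-L2.1}) and the elliptic estimates (Lemma \ref{2dbu-L2.2}), using the $L^3$-bound on $H$ from Lemma \ref{2dbu-L3.4} together with the a priori hypothesis (\ref{2dbu-E3.02}), before Young's inequality and the smallness $(\gamma-1)^{1/6}E_0^{1/2}\le\varepsilon_2$ allow them to be absorbed into the dissipative parts on the left. I expect the exponent $\tfrac{11}{18}$ in the $A_2$ bound to arise from interpolating the $\tfrac19$ exponent supplied by $A_3$, $A_4$, $A_5$ against the $\tfrac12$ exponent supplied by $A_1+A_2$ in (\ref{2dbu-E3.02}), specifically $\tfrac{11}{18}=\tfrac12+\tfrac19$.
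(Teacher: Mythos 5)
Your proposal is correct and follows essentially the same route as the paper: testing the momentum equation against $\sigma\dot u$ for $A_{1}$, applying $\partial_{t}+\mathrm{div}(u\,\cdot)$ and testing against $\sigma^{2}\dot u$ for $A_{2}$, closing the magnetic components via the induction equation and the bounds of Lemmas \ref{2dbu-L3.3}--\ref{2dbu-L3.4}, and using the smallness $\varepsilon_{2}$ to absorb the mixed $\|\nabla\dot u\|_{L^{2}}^{2}+\|\nabla H_{t}\|_{L^{2}}^{2}$ cross terms. Your identification of the exponent $\tfrac{11}{18}=\tfrac12+\tfrac19$ as the product of the $A_{2}$-type supremum with $\int_{0}^{T}\|\nabla u\|_{L^{2}}^{4}$ matches the paper's computation exactly.
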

\begin{proof}
The basic idea of the proof of this lemma is due to Hoff \cite{hoff}, Huang-Li-Xin \cite{X.D. Huang}, Li-Xu-Zhang \cite{H.L. L} and Hou-Peng-Zhu \cite{X.F. Hou}.
 Multiplying $(\ref{2dbu-E1.1})_{2}$ by $\sigma \dot{u}$, and integrating over $\mathbb{R}^{3}$, one has
\begin{align}\label{2dbu-E3.032}
  \D\intt\sigma\rho|\dot{u}|^{2}&=-\intt\sigma \dot{u}\cdot\nabla P
  +\mu\intt\sigma\triangle u\cdot\dot{u}\non\\[2mm]
  &\D\quad+(\mu+\lambda)\intt\nabla\mdiv u\cdot\sigma\dot{u}
  +\intt\left(H\cdot\nabla H-\frac{1}{2}\nabla |H|^{2}\right)\cdot\sigma\dot{u}\non\\[2mm]
  &\D=\sum_{i=1}^{4}\mathscr{E}_{i}.
\end{align}
Moreover, using integration by parts, we have
\begin{align}\label{2dbu-E3.033}
 \D\mathscr{E}_{1}&=\intt\sigma \dot{u}\cdot\nabla P\non\\[2mm]
 \D&=\intt\sigma\mdiv u_{t}P+\intt\sigma\mdiv(u\cdot\nabla u)P\non\\[2mm]
 \D&=\rf\int\sigma\mdiv uP-\intt\sigma'P\mdiv u-\intt\sigma\mdiv u P_{t}
 +\intt\sigma\mdiv(u\cdot\nabla u)P\non\\[2mm]
 \D&=\rf\int\sigma\mdiv uP-\sigma'\intt P\mdiv u
 +(\gamma-1)\sigma\intt P|\mdiv u|^{2}+\intt\sigma P|\nabla u|^{2},
\end{align}
\begin{align}
 \D\mathscr{E}_{2}&=\intt\mu\sigma\triangle u\cdot \dot{u}\non\\[2mm]
 \D&=-\mu\intt\sigma\nabla u\cdot\nabla u_{t}
 +\mu\intt\sigma\triangle u\cdot(u\cdot\nabla u)\non\\[2mm]
 \D&\leq-\left(\frac{\mu}{2}\int\sigma|\nabla u|^{2}\right)_{t}
 -\frac{\mu\sigma'}{2}\intt|\nabla u|^{2}+2\mu\sigma\intt|\nabla u|^{3}
\end{align}
and
\begin{align}
  \D\mathscr{E}_{3}&=\intt(\mu+\lambda)\sigma\nabla\mdiv u\cdot \dot{u}\non\\[2mm]
  \D&\leq-\frac{\mu+\lambda}{2}\rf\intt\sigma|\mdiv u|^{2}
  +\frac{\mu+\lambda}{2}\sigma'\intt|\mdiv u|^{2}
  +2(\mu+\lambda)\sigma\intt|\nabla u|^{3}.
\end{align}
It remains to estimate $\mathscr{E}_{4}$. In fact,
\begin{align}\label{2dbu-E3.036}
 \D\quad\mathscr{E}_{4}&=\intt\left(H\cdot\nabla H
 -\frac{1}{2}\nabla |H|^{2}\right)\cdot\left(\sigma u_{t}+\sigma u\cdot\nabla u\right)\non\\[2mm]
 &\D=\rf\intt\left(H\cdot\nabla H-\frac{1}{2}\nabla |H|^{2}\right)\cdot\sigma u
 -\intt\left(H\cdot\nabla H-\frac{1}{2}\nabla |H|^{2}\right)_{t}\cdot\sigma u\non\\[2mm]
 &\quad-\intt\left(H\cdot\nabla H-\frac{1}{2}\nabla |H|^{2}\right)\cdot\sigma'u
 +\intt\sigma\left(H\cdot\nabla H-\frac{1}{2}\nabla |H|^{2}\right)\cdot(u\cdot\nabla u)\non\\[2mm]
 &\D\leq \rf\intt\left(H\cdot\nabla H-\frac{1}{2}\nabla |H|^{2}\right)\cdot\sigma u
 +C\sigma\|H\|_{L^{6}}\|H_{t}\|_{L^{2}}\|\nabla u\|_{L^{3}}\non\\[2mm]
 &\D\quad+\sigma'\|\nabla u\|_{L^{2}}\|H\|_{L^{4}}^{2}+C\sigma\|H\|_{L^{6}}\|\nabla H\|_{L^{6}}\|u\|_{L^{6}}\|\nabla u\|_{L^{2}}\non\\[2mm]
 &\D\leq \rf\intt\left(H\cdot\nabla H-\frac{1}{2}\nabla |H|^{2}\right)\cdot\sigma u
 +C\sigma\|\nabla H\|_{L^{2}}^{6}+\sigma\|H_{t}\|_{L^{2}}^{2}+\sigma\|\nabla u\|_{L^{3}}^{3}\non\\[2mm]
 &\D\quad+C\sigma'\|\nabla u\|_{L^{2}}^{2}
  +C\sigma'\|\nabla H\|_{L^{2}}^{2}\|H\|_{L^{3}}^{2}
  +C\sigma\|\nabla^{2} H\|_{L^{2}}^{2}+\sigma\|\nabla u\|_{L^{2}}^{4}\|\nabla H\|_{L^{2}}^{2}.
\end{align}
Substituting (\ref{2dbu-E3.033})-(\ref{2dbu-E3.036}) into (\ref{2dbu-E3.032}), we have
\begin{align}\label{2dbu-E3.037}
  &\quad\D \rf\Big\{\intt\left(\frac{\mu}{2}\sigma\|\nabla u\|_{L^{2}}^{2}
  +\frac{\mu+\lambda}{2}\sigma\|\mdiv u\|_{L^{2}}^{2}\right)\non\\[2mm]
  &\quad\ +\sigma\intt\mdiv uP
   -\intt\left(H\cdot\nabla H-\frac{1}{2}\nabla |H|^{2}\right)\cdot\sigma u\Big\}
   +\intt\sigma\rho|\dot{u}|^{2}\non\\[2mm]
  &\D\leq \frac{\mu\sigma'}{2}\intt|\nabla u|^{2}+2\mu\sigma\intt|\nabla u|^{3}
  +\frac{\mu+\lambda}{2}\sigma'\intt|\mdiv u|^{2}
  +2(\mu+\lambda)\sigma\intt|\nabla u|^{3}\non\\[2mm]
  &\D\quad+C\sigma\|\nabla H\|_{L^{2}}^{6}+\sigma\|H_{t}\|_{L^{2}}^{2}
  +\sigma\|\nabla u\|_{L^{3}}^{3}
  -\sigma'\intt P\mdiv u+\gamma\sigma\intt P|\nabla u|^{2}\non\\[2mm]
  &\D\quad+C\sigma'\|\nabla u\|_{L^{2}}^{2}
  +C\sigma'\|\nabla H\|_{L^{2}}^{2}\|H\|_{L^{3}}^{2}
  +C\sigma\|\nabla^{2} H\|_{L^{2}}^{2}
  +\sigma\|\nabla u\|_{L^{2}}^{4}\|\nabla H\|_{L^{2}}^{2}.
\end{align}
Integrate (\ref{2dbu-E3.037}) over $(0,T)$, we have
\begin{align}
  &\quad\D \sup_{0\leq t\leq T}\left(\frac{\mu}{2}\sigma\|\nabla u\|_{L^{2}}^{2}
  +\frac{\mu+\lambda}{2}\sigma\|\mdiv u\|_{L^{2}}^{2}\right)
  +\int_{0}^{T}\sigma\|\rho^{\frac{1}{2}}\dot{u}\|_{L^{2}}^{2}\non\\[2mm]
   &\D\leq\sigma\intt\left(H\cdot\nabla H
   -\frac{1}{2}\nabla |H|^{2}\right)\cdot u
   +C\int_{0}^{\sigma(T)}\intt|\nabla u|^{2}
   +C(4\mu+\lambda+1)\int_{0}^{T}\sigma\intt|\nabla u|^{3}\non\\[2mm]
  &\D\quad+\int_{0}^{\sigma(T)}\intt |P\mdiv u|
  +\gamma\int_{0}^{T}\sigma\intt P|\nabla u|^{2}
  +\int_{0}^{T}\sigma\|\nabla H\|_{L^{2}}^{6}+\int_{0}^{T}\sigma\|H_{t}\|_{L^{2}}^{2}\non\\[2mm]
  &\D\quad+\int_{0}^{\sigma(T)}\|\nabla H\|_{L^{2}}^{2}\|H\|_{L^{3}}^{2}
  +C(\gamma-1)E_{0}+\int_{0}^{T}\sigma\|\nabla^{2}H\|_{L^{2}}^{2}
  +\int_{0}^{T}\sigma\|\nabla H\|_{L^{2}}^{2}\|\nabla u\|_{L^{2}}^{4}\non\\[2mm]
  &\D\leq\epsilon\mu\sigma\|\nabla u\|_{L^{2}}^{2}
  +\frac{C(\epsilon)}{\mu}\sigma\|H\|_{L^{4}}^{4}+C\int_{0}^{\sigma(T)}\intt|\nabla u|^{2}
  +C\int_{0}^{T}\sigma\intt|\nabla u|^{3}\non\\[2mm]
  &\D\quad+C\gamma\int_{0}^{T}\sigma\intt P|\nabla u|^{2}+C(\bar{\rho},\epsilon)(\gamma-1)E_{0}
  +CK_{2}^{5}M_{2}\left((\gamma-1)^{\frac{1}{6}}E_{0}^{\frac{1}{2}}\right)^{2}\non\\[2mm]
  &\D\quad+CK_{2}^{2}(\gamma-1)^{\frac{1}{6}}E_{0}^{\frac{1}{2}}
      +K_{2}^{2}\left((\gamma-1)^{\frac{1}{6}}E_{0}^{\frac{1}{2}}\right)^{1+\frac{2}{27}}
      +K_{2}^{2}K_{1}\left((\gamma-1)^{\frac{1}{6}}E_{0}^{\frac{1}{2}}\right)^{1+\frac{1}{9}},
\end{align}
which leads to
\begin{align}\label{2dbu-E3.039}
  &\quad\D \sup_{0\leq t\leq T}\left(\frac{\mu}{4}\sigma\|\nabla u\|_{L^{2}}^{2}
  +\frac{\mu+\lambda}{2}\sigma\|\mdiv u\|_{L^{2}}^{2}\right)
  +\int_{0}^{T}\sigma\|\rho^{\frac{1}{2}}\dot{u}\|_{L^{2}}^{2}dt\non\\
  &\D\leq CK_{4}(\gamma-1)^{\frac{1}{6}}E_{0}^{\frac{1}{2}}
  +C\int_{0}^{\sigma(T)}\intt|\nabla u|^{2}
  +C\int_{0}^{T}\sigma\intt|\nabla u|^{3}\non\\
  &\D\quad+C\gamma\int_{0}^{T}\sigma\intt P|\nabla u|^{2}.
\end{align}
Then, by the virtue of (\ref{2dbu-E3.08}), we get (\ref{2dbu-E3.030}).

Next, operating $\partial_{t}+\mdiv(u\cdot)$ to the both sides of the $j$th equation of $(\ref{2dbu-E1.1})_{2}$, yields that
\begin{align}\label{2dbu-E3.040}
  &\D\quad(\rho\dot{u}^{j})_{t}+\mdiv(\rho u\dot{u}^{j})-\mu\Delta \dot{u}^{j}
  -(\mu+\lambda)\partial_{j}\mdiv \dot{u}\non\\[2mm]
  &\D=\mu\partial_{i}(-\partial_{i}u\cdot\nabla u^{j}
  +\div u\partial_{i}u^{j})-\mu\mdiv(\partial_{i}u\partial_{i}u^{j})
  -(\mu+\lambda)\partial_{j}\left[\partial_{i}u\cdot\nabla u^{i}
  -|\mdiv u|^{2}\right]\non\\[2mm]
  &\D\quad-\mdiv(\partial_{j}u(\mu+\lambda)\mdiv u)
  +(\gamma-1)\partial_{j}(P\mdiv u)+\mdiv(P\partial_{j}u)\non\\[2mm]
  &\D\quad +\left[(H\cdot\nabla H^{j})_{t}+\mdiv(uH\cdot\nabla H^{j})\right]
  -\frac{1}{2}\left[(\partial_{j}|H|^{2})_{t}+\mdiv(u\partial_{j}|H|^{2})\right].
\end{align}
Multiplying (\ref{2dbu-E3.040}) by $\sigma^{m}\dot{u}^{j}$ for $m\geq 0$, and integrating by parts over $\mathbb{R}^{3}$, we obtain after summing them with respect to $j$ that
\begin{align}
 &\D \quad\frac{1}{2}\rf\intt\sigma^{m}\rho|\dot{u}|^{2}+\mu\intt\sigma^{m}|\nabla\dot{u}|^{2}
 +(\mu+\lambda)\intt\sigma^{m}|\mdiv\dot{u}|^{2}\non\\[2mm]
 &\D=\frac{m}{2}\sigma^{m-1}\sigma'\intt\rho|\dot{u}|^{2}
 +\mu\intt\sigma^{m}\partial_{i}\dot{u}^{j}(\partial_{i}u\cdot\nabla u^{i}
 -\div u\partial_{i}u^{j})\non\\[2mm]
 &\D\quad+\mu\intt\sigma^{m}\partial_{k}\dot{u}^{j}\partial_{i}u^{k}\partial_{i}u^{j}+
 \intt\sigma^{m}\partial_{j}\dot{u}^{j}\left[(\mu+\lambda)\partial_{i}u\cdot\nabla u^{i}
 -\mu|\mdiv u|^{2}\right]\non\\[2mm]
 &\D\quad+(\mu+\lambda)\intt\sigma^{m}\partial_{k}\dot{u}^{j}\partial_{j}u^{k}\mdiv u
 -\intt\sigma^{m}\partial_{k}\dot{u}^{j}\partial_{j}u^{k}P\non\\[2mm]
 &\D\quad-(\gamma-1)\intt\sigma^{m} \partial_{j}\dot{u}^{j}\partial_{k}u^{k}P
 -\frac{1}{2}\intt\sigma^{m}\dot{u}^{j}
 \left[(\partial_{j}|H|^{2})_{t}+\mdiv(u\partial_{j}|H|^{2})\right]\non\\[2mm]
 &\D\quad+\intt\sigma^{m}\dot{u}^{j}\left[(H\cdot\nabla H^{j})_{t}
 +\mdiv(uH\cdot\nabla H^{j})\right]\non\\[2mm]
 &\D\leq\frac{m}{2}\sigma^{m-1}\sigma'\intt\rho|\dot{u}|^{2}
 +\frac{\mu}{2}\intt\sigma^{m}|\nabla\dot{u}|^{2}
 +\frac{\mu+\lambda}{2}\intt\sigma^{m}|\mdiv\dot{u}|^{2}\non\\[2mm]
 &\D\quad+C\mu\intt\sigma^{m}|\nabla u|^{4}+C(\mu+\lambda)
 \left(1+\frac{\lambda}{\mu}\right)\intt\sigma^{m}|\nabla u|^{4}
 +C\gamma^{2}\intt\sigma^{m}|P\nabla u|^{2}\non\\[2mm]
 &\D\quad+C_{1}\sigma^{m}\|H\|_{L^{3}}^{2}\|\nabla H_{t}\|_{L^{2}}^{2}
 +C\sigma^{m}\left(\|\nabla u\|_{L^{2}}^{4}
 +\|\nabla H\|_{L^{2}}^{4}\right)\|\nabla^{2}H\|_{L^{2}}^{2}\non\\[2mm]
 &\D\leq\frac{m}{2}\sigma^{m-1}\sigma'\intt\rho|\dot{u}|^{2}
 +\frac{\mu}{2}\intt\sigma^{m}|\nabla\dot{u}|^{2}
  +\frac{\mu+\lambda}{2}\intt\sigma^{m}|\mdiv\dot{u}|^{2}\non\\[2mm]
  &\D\quad+C\mu\intt\sigma^{m}|\nabla u|^{4}+C\gamma^{2}\intt\sigma^{m}|P\nabla u|^{2}+C_{1}
  \sigma^{m}\left((\gamma-1)^{\frac{1}{6}}E_{0}^{\frac{1}{2}}\right)^{\frac{2}{27}}
  \|\nabla H_{t}\|_{L^{2}}^{2}\non\\[2mm]
  &\D\quad+C\sigma^{m}\left(\|\nabla u\|_{L^{2}}^{4}
  +\|\nabla H\|_{L^{2}}^{4}\right)\|\nabla^{2}H\|_{L^{2}}^{2},
 \end{align}
 which implies that
 \begin{align}\label{2dbu-E3.042}
   &\D \quad\frac{1}{2}\rf\intt\sigma^{m}\rho|\dot{u}|^{2}
   +\intt\sigma^{m}|\nabla\dot{u}|^{2}
   -C_{1}\sigma^{m}\left((\gamma-1)^{\frac{1}{6}}E_{0}^{\frac{1}{2}}\right)^{\frac{2}{27}}
   \|\nabla H_{t}\|_{L^{2}}^{2}\non\\
   &\D\leq C\sigma^{m-1}\sigma'\intt\rho|\dot{u}|^{2}+C\mu\intt\sigma^{m}|\nabla u|^{4}
   +C\gamma^{2}\intt\sigma^{m}|P\nabla u|^{2}\non\\[2mm]
  &\D\quad+C\sigma^{m}\left(\|\nabla u\|_{L^{2}}^{4}
  +\|\nabla H\|_{L^{2}}^{4}\right)\|\nabla^{2}H\|_{L^{2}}^{2}.
 \end{align}
  Similar to \cite{H.L. L}, noting that
 \begin{align}
  \D H_{tt}-\nu\Delta H_{t}=(H\cdot\nabla u-u\cdot\nabla H-H\mdiv u)_{t},
 \end{align}
 and using the identity $\D u_{t}=\dot{u}-u\cdot\nabla u$, we obtain by direct computation that
 \begin{align}\label{2dbu-E3.044}
  &\D\quad\frac{1}{2}\rf\intt\sigma^{m}|H_{t}|^{2}+\nu\intt\sigma^{m}|\nabla H_{t}|^{2}
  -\frac{m}{2}\sigma^{m-1}\sigma'\intt|H_{t}|^{2}\non\\
  &\D=\intt\sigma^{m}(H_{t}\cdot\nabla u-u\cdot\nabla H_{t}-H_{t}\mdiv u)\cdot H_{t}\non\\
  &\D\quad+\intt\sigma^{m}(H\cdot\nabla \dot{u}-\dot{u}\cdot \nabla H-H\mdiv\dot{u})\cdot H_{t}\non\\
  &\D\quad-\intt\sigma^{m}\left[H\cdot\nabla(u\cdot\nabla u)-(u\cdot\nabla u)\cdot\nabla H
  -H\mdiv(u\cdot\nabla u)\right]\cdot H_{t}\non\\
  &\D\triangleq\sum_{i=1}^{3}\mathscr{N}_{i}.
 \end{align}
 It follows from Cauchy inequality, Sobolev inequality and (\ref{2dbu-E3.02}) that
 \begin{align}\label{2dbu-E3.045}
   \D \mathscr{N}_{1}&\leq \epsilon\nu\sigma^{m}\|\nabla H_{t}\|_{L^{2}}^{2}
   +C\sigma^{m}\|H_{t}\|_{L^{2}}^{2}\|\nabla u\|_{L^{2}}^{4},\non\\[2mm]
   \D\mathscr{N}_{2}&\leq C\|H\|_{L^{3}}\sigma^{m}(\|\nabla \dot{u}\|_{L^{2}}^{2}
   +\|\nabla H_{t}\|_{L^{2}}^{2})\non\\[2mm]
   \D&\leq C_{2}\left((\gamma-1)^{\frac{1}{6}}E_{0}^{\frac{1}{2}}\right)^{\frac{1}{27}}
   \sigma^{m}(\|\nabla\dot{u}\|_{L^{2}}^{2}+\|\nabla H_{t}\|_{L^{2}}^{2}),\non\\[2mm]
   \D\mathscr{N}_{3}&\leq C\sigma^{m}\|\nabla u\|_{L^{2}}\|\nabla u\|_{L^{6}}\|\nabla H\|_{L^{2}}\|\nabla H_{t}\|_{L^{2}}.
 \end{align}
 Thanks to (\ref{2dbu-E2.06}), (\ref{2dbu-E3.03}) and Sobolev inequality, we deduce that
  \begin{align}\label{2dbu-E3.046}
   \D\|\nabla u\|_{L^{6}}&\leq C(\|\rho\dot{u}\|_{L^{2}}+\|P\|_{L^{6}}+\|H\cdot\nabla H\|_{L^{2}})\non\\
   \D&\leq C(\bar{\rho})\left(\|\rho^{\frac{1}{2}}\dot{u}\|_{L^{2}}+\|H\|_{L^{3}}\|\nabla^{2}H\|_{L^{2}}\right)
   +C(\bar{\rho})(\gamma-1)^{\frac{1}{6}}E_{0}^{\frac{1}{6}}.
 \end{align}
 Combining (\ref{2dbu-E3.045}) and (\ref{2dbu-E3.046}), we get
 \begin{align}\label{2dbu-E3.047}
   \D\mathscr{N}_{3}&\leq \frac{\nu}{8}\sigma^{m}\|\nabla H_{t}\|_{L^{2}}^{2}
   +C\sigma^{m}\left(\|\nabla u\|_{L^{2}}^{4}+\|\nabla H\|_{L^{2}}^{4}\right)\|H\|_{L^{3}}^{2}\|\nabla^{2}H\|_{L^{2}}^{2}\non\\
   \D&+C\sigma^{m}\|\nabla u\|_{L^{2}}^{2}\|\rho^{\frac{1}{2}}\dot{u}\|_{L^{2}}^{2}\|\nabla H\|_{L^{2}}^{2}
   +C\sigma^{m}\|\nabla u\|_{L^{2}}^{2}\|\nabla H\|_{L^{2}}^{2}(\gamma-1)^{\frac{1}{3}}E_{0}^{\frac{1}{3}}.
 \end{align}
Consequently, substituting (\ref{2dbu-E3.045})-(\ref{2dbu-E3.047}) into (\ref{2dbu-E3.044}) yields that
 \begin{align}\label{2dbu-E3.048}
   &\D\quad\rf\intt\sigma^{m}|H_{t}|^{2}+\intt\sigma^{m}|\nabla H_{t}|^{2}-C_{2}\nu\sigma^{m}\left((\gamma-1)^{\frac{1}{6}}
   E_{0}^{\frac{1}{2}}\right)^{\frac{1}{27}}(\|\nabla \dot{u}\|_{L^{2}}^{2}
   +\|\nabla H_{t}\|_{L^{2}}^{2})\non\\[2mm]
   &\D\leq \frac{m}{2}C\sigma^{m-1}\sigma'\intt|H_{t}|^{2}
   +C\sigma^{m}\|\nabla u\|_{L^{2}}^{2}\|\rho^{\frac{1}{2}}\dot{u}\|_{L^{2}}^{2}\|\nabla H\|_{L^{2}}^{2}\non\\[2mm]
   &\D\quad+C\sigma^{m}\left(\|\nabla u\|_{L^{2}}^{4}+\|\nabla H\|_{L^{2}}^{4}\right)\left(\|H\|_{L^{3}}^{2}\|\nabla^{2}H\|_{L^{2}}^{2}
   +\|H_{t}\|_{L^{2}}^{2}\right)\non\\[2mm]
   &\D\quad+C\sigma^{m}\|\nabla u\|_{L^{2}}^{2}\|\nabla H\|_{L^{2}}^{2}(\gamma-1)^{\frac{1}{3}}E_{0}^{\frac{1}{3}}.
 \end{align}
 Thus, taking $m=2$ in (\ref{2dbu-E3.042}) and (\ref{2dbu-E3.048}), and integrating the resulting equation over $(0, T)$, we deduce that
 \begin{align}\label{2dbu-E3.051}
  &\D\quad\sup_{0\leq t\leq T}\sigma^{2}\left(\|\rho^{\frac{1}{2}}\dot{u}\|_{L^{2}}^{2}
  +\|H_{t}\|_{L^{2}}^{2}\right)
  +\int_{0}^{T}\sigma^{2}\left(\|\nabla\dot{u}\|_{L^{2}}^{2}
  +\|\nabla H_{t}\|_{L^{2}}^{2}\right)\non\\
  &\D\leq CK_{5}\left((\gamma-1)^{\frac{1}{6}}E_{0}^{\frac{1}{2}}\right)^{\frac{11}{18}}
  +C\gamma^{2}\int_{0}^{T}\intt\sigma^{2}|P\nabla u|^{2}\non\\
  &\D\quad+C\int_{0}^{T}\intt\sigma^{2}|\nabla u|^{4}+CA_{1}(T),
 \end{align}
 provided that
 \begin{align}
   \D (\gamma-1)^{\frac{1}{6}}E_{0}^{\frac{1}{2}}\leq \varepsilon_{2}\triangleq \min\left\{\varepsilon_{1},(4C_{1})^{-\frac{27}{2}}+(4C_{2})^{-27}\right\},
 \end{align}
 where we have used the following inequalities:
 \begin{align}
   &\D \quad \int_{0}^{T}\sigma^{2}\|\nabla u\|_{L^{2}}^{2}
   \|\rho^{\frac{1}{2}}\dot{u}\|_{L^{2}}^{2}\|\nabla H\|_{L^{2}}^{2}\non\\
  &\D\leq \sup_{0\leq t\leq T}\left(\sigma^{2}\|\rho^{\frac{1}{2}}\dot{u}\|_{L^{2}}^{2}\right)
  \int_{0}^{T}\left(\|\nabla u\|_{L^{2}}^{4}+\|\nabla H\|_{L^{2}}^{4}\right)\non\\
  &\D\leq CK_{1}\left((\gamma-1)^{\frac{1}{6}}E_{0}^{\frac{1}{2}}\right)^{\frac{11}{18}}+
  CK_{2}^{3}M_{2}\left((\gamma-1)^{\frac{1}{6}}E_{0}^{\frac{1}{2}}\right)^{\frac{3}{2}},\label{2dbu-E3.0057}\\
   &\D\quad \int_{0}^{T}\sigma^{2}\|\nabla u\|_{L^{2}}^{2}\|\nabla H\|_{L^{2}}^{2}
  (\gamma-1)^{\frac{1}{3}}E_{0}^{\frac{1}{3}}\non\\
  &\D\leq C\sup_{0\leq t\leq T}\left(\sigma\|\nabla u\|_{L^{2}}^{2}\right)(\gamma-1)^{\frac{1}{3}}
  E_{0}^{\frac{1}{3}}\int_{0}^{T}\|\nabla H\|_{L^{2}}^{2}\non\\
  &\D\leq CK_{2}^{2}\left((\gamma-1)^{\frac{1}{6}}E_{0}^{\frac{1}{2}}\right)^{\frac{3}{2}
 }(\gamma-1)^{\frac{1}{3}}
  E_{0}^{\frac{1}{3}}\non\\
  &\D\leq CK_{2}^{2}(\gamma-1)^{\frac{23}{72}}\left((\gamma-1)^{\frac{1}{6}}E_{0}^{\frac{1}{2}}\right)^{\frac{3}{2}},\\
   &\D\quad\int_{0}^{T}\sigma^{2}\left(\|\nabla u\|_{L^{2}}^{4}+\|\nabla H\|_{L^{2}}^{4}\right)\left(\|H\|_{L^{3}}^{2}\|\nabla^{2}H\|_{L^{2}}^{2}+\|H_{t}\|_{L^{2}}^{2}\right)\non\\
   &\D\leq C \sup_{0\leq t\leq T}\Big(\sigma^{2}\|\nabla^{2}H\|_{L^{2}}^{2}\Big)\Big(\sup_{0\leq t\leq T}\|H\|_{L^{3}}^{3}\Big)^{\frac{2}{3}}\int_{0}^{T}\|\nabla u\|_{L^{2}}^{4}\non\\
   &\D\quad+ \sup_{0\leq t\leq T}\Big(\sigma^{2}\|H_{t}\|_{L^{2}}^{2}\Big)\int_{0}^{T}\|\nabla H\|_{L^{2}}^{4}\non\\
   &\D\leq CK_{1}\left((\gamma-1)^{\frac{1}{6}}E_{0}^{\frac{1}{2}}\right)^{\frac{37}{54}}
   +CK_{2}^{3}M_{2}\left((\gamma-1)^{\frac{1}{6}}E_{0}^{\frac{1}{2}}\right)^{\frac{3}{2}}\non\\
   &\D\leq C(K_{1}+K_{2}^{3}M_{2})\left((\gamma-1)^{\frac{1}{6}}E_{0}^{\frac{1}{2}}\right)^{\frac{11}{18}}.\label{2dbu-E3.0059}
 \end{align}
 Here, to obtained (\ref{2dbu-E3.0057})-(\ref{2dbu-E3.0059}), (\ref{2dbu-E3.08}) and (\ref{2dbu-E3.9}) have been used.

 Furthermore, it holds from $(\ref{2dbu-E1.1})_{3}$ that
 \begin{align}
  \D \Delta H=H_{t}-(H\cdot\nabla u-u\cdot\nabla H-\mdiv u H),
 \end{align}
 then standard $L^{p}$ estimate for elliptic equation gives
 \begin{align}\label{2dbu-E3.57}
   \D \|\nabla^{2}H\|_{L^{2}}&\leq C\left(\|H_{t}\|_{L^{2}}
   +\|H\cdot\nabla u-u\cdot\nabla H-\mdiv u H\|_{L^{2}}\right)\non\\
 \D&\leq C\left(\|H_{t}\|_{L^{2}}+\|\nabla H\|_{L^{2}}^{\frac{1}{2}}\|\nabla^{2}H\|_{L^{2}}^{\frac{1}{2}}\|\nabla u\|_{L^{2}}\right).
 \end{align}
By (\ref{2dbu-E3.01}) and (\ref{2dbu-E3.051}), we have
\begin{align}\label{2dbu-E3.059}
   &\D\quad \sup_{0\leq t\leq T}\left(\sigma^{2}\|\nabla^{2}H\|_{L^{2}}^{2}\right)\non\\[2mm]
   &\D\leq C\sup_{0\leq t\leq T}\sigma^{2}\left(\|\nabla H\|_{L^{2}}^{2}\|\nabla u\|_{L^{2}}^{4}
   +\|H_{t}\|_{L^{2}}^{2}\right)\non\\[2mm]
   &\D\leq C\sup_{0\leq t\leq \sigma(T)}\sigma^{2}\left(\|\nabla H\|_{L^{2}}^{2}\|\nabla u\|_{L^{2}}^{4}\right)
   +C\sup_{\sigma(T)\leq t\leq T}\sigma^{2}\left(\|\nabla H\|_{L^{2}}^{2}\|\nabla u\|_{L^{2}}^{4}\right)\non\\[2mm]
   &\D\quad+C\sup_{0\leq t\leq T}\sigma^{2}\|H_{t}\|_{L^{2}}^{2}\non\\[2mm]
   &\D\leq C\sup_{0\leq t\leq \sigma(T)}
   \left((\sigma\|\nabla H\|_{L^{2}}^{2})\Big(\sigma^{\frac{1}{3}}\|\nabla u\|_{L^{2}}^{2}\Big)^{2}\right)
   +C\sup_{\sigma(T)\leq t\leq T}\left(\Big(\sigma\|\nabla H\|_{L^{2}}^{2}\Big)
   \Big(\sigma\|\nabla u\|_{L^{2}}^{2}\Big)^{2}\right)\non\\[2mm]
   &\D\quad +CK_{5}\left((\gamma-1)^{\frac{1}{6}}E_{0}^{\frac{1}{2}}\right)^{\frac{11}{18}}
   +C\gamma^{2}\int_{0}^{T}\intt\sigma^{2}|P\nabla u|^{2}\non\\
  &\D\quad+C\int_{0}^{T}\intt\sigma^{2}|\nabla u|^{4}+CA_{1}(T)\non\\[2mm]
  &\D\leq CK_{5}\left((\gamma-1)^{\frac{1}{6}}E_{0}^{\frac{1}{2}}\right)^{\frac{11}{18}}
  +C\gamma^{2}\int_{0}^{T}\intt\sigma^{2}|P\nabla u|^{2}\non\\
  &\D\quad+C\int_{0}^{T}\intt\sigma^{2}|\nabla u|^{4}+CA_{1}(T).
 \end{align}
 Due to (\ref{2dbu-E3.08}), (\ref{2dbu-E3.051}) and (\ref{2dbu-E3.059}), we finally get (\ref{2dbu-E3.33}). And Lemma \ref{2dbu-L3.6} is proved.
\end{proof}
The following lemma will play a crucial role in the proof of the upper bound of the density.
\begin{lem}\label{2dbu-L3.7}
Under the same assumption as in Proposition \ref{2dbu-P3.1}, it holds that
\begin{align}
   \D& \sup_{0\leq t\leq \sigma(T)}\Big(\|\nabla u\|_{L^{2}}^{2}+\|\mdiv u\|_{L^{2}}^{2}\Big)
   +\int_{0}^{\sigma(T)}\intt\rho|\dot{u}|^{2}\leq K_{6},\label{2dbu-E3.0061}\\
   \D &\sup_{0\leq t\leq \sigma(T)} t\bi(\|\rho^{\frac{1}{2}}\dot{u}\|_{L^{2}}^{2}+\|H_{t}\|_{L^{2}}^{2}\bi)
  +\int_{0}^{\sigma(T)} t\bi(\|\nabla\dot{u}\|_{L^{2}}^{2}+\|\nabla H_{t}\|_{L^{2}}^{2}\bi)\leq K_{7},\label{2dbu-E3.0062}
 \end{align}
and
\begin{align}
\D&\sup_{0\leq t\leq T}\|\nabla u\|_{L^{2}}^{2}+\int_{0}^{T}\intt\rho|\dot{u}|^{2}\leq C(K_{6}+1),\label{2dbu-E3.0063}\\
    &\D \quad\sup_{0\leq t\leq T} \sigma\bi(\|\rho^{\frac{1}{2}}\dot{u}\|_{L^{2}}^{2}+\|H_{t}\|_{L^{2}}^{2}\bi)
  +\int_{0}^{T} \sigma\bi(\|\nabla\dot{u}\|_{L^{2}}^{2}+\|\nabla H_{t}\|_{L^{2}}^{2}\bi)\leq C(K_{7}+1),\label{2dbu-E3.0064}
 \end{align}
 provided that
 \begin{align}
   \D (\gamma-1)^{\frac{1}{6}}E_{0}^{\frac{1}{2}}\leq \varepsilon_{3}\triangleq \min\left\{\varepsilon_{1},\varepsilon_{2},4C_{3}(\bar{\rho}))^{-27},1\right\},
 \end{align}
 where
 \begin{align}
  \D K_{6}&=C(\bar{\rho})(K_{2}M_{2}+K_{3}+K_{2}^{2}+M_{2}^{\frac{3}{4}}M_{1}^{\frac{1}{2}}+1)
  +C(\bar{\rho})(\gamma-1)^{\frac{2}{9}}+CK_{3}(\gamma-1)^{\frac{4}{9}}
   \non\\
   &\D\quad+CK_{2}M_{2}(\gamma-1)^{\frac{4}{9}}+CK_{3}(\gamma-1)^{\frac{1}{3}}
   +(\gamma-1)^{\frac{1}{3}}+C(K_{3}+1)(\gamma-1),\\
   \D K_{7}&=2\max\{K_{7}^{1}, K_{7}^{2}\},\\
   \D K_{7}^{1}&=CK_{6}+CK_{6}^{\frac{3}{2}}+CK_{6}^{\frac{1}{2}}K_{2}M_{2}+CK_{6}^{\frac{1}{2}}(\gamma-1)^{\frac{1}{3}}
  +C(K_{6}^{2}+K_{2}^{2}M_{2}^{2})\non\\
  &\D\quad+C(\gamma-1)^{\frac{1}{3}}(K_{6}^{\frac{3}{2}}+K_{6}^{\frac{1}{2}}K_{2}M_{2}+K_{6}^{\frac{1}{2}}(\gamma-1)^{\frac{1}{3}})^{\frac{1}{2}},\non\\
  \D K_{7}^{2}&=CK_{2}M_{2}+CK_{6}^{2}K_{2}^{2}+CK_{2}^{4}M_{2}^{2}+C(K_{6}^{2}+CK_{2}^{2}M_{2}^{2})K_{2}^{2}+CK_{6}K_{2}^{2}(\gamma-1)^{\frac{2}{9}}.
 \end{align}
\end{lem}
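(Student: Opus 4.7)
The strategy is to derive the short-time bounds \eqref{2dbu-E3.0061}--\eqref{2dbu-E3.0062} on $[0,\sigma(T)]$ by Hoff-type energy estimates that exploit the initial assumptions $\|\nabla u_0\|_{L^2}^2\leq M_1$, $\|H_0\|_{D^1}^2\leq M_2$ and the compatibility condition \eqref{2dbu-E1.8}, and then to obtain the long-time bounds \eqref{2dbu-E3.0063}--\eqref{2dbu-E3.0064} by patching these short-time estimates with the small bounds on $A_1(T),A_2(T)$ available from hypothesis \eqref{2dbu-E3.02}.

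For \eqref{2dbu-E3.0061}, I would multiply $(\ref{2dbu-E1.1})_{2}$ by $\dot u$ and integrate over $\mathbb{R}^3$, performing the same integration-by-parts identities as in the derivation of \eqref{2dbu-E3.039} but without the $\sigma$ weight. This yields a differential inequality of the form
\begin{align*}
\rf\Phi(t)+\intt\rho|\dot u|^2\leq C\Big(\|\nabla u\|_{L^3}^3+\gamma\|\sqrt{P}\nabla u\|_{L^2}^2+\text{magnetic remainders}\Big),
\end{align*}
where $\Phi(t)$ gathers $\|\nabla u\|_{L^2}^2$, $\|\mdiv u\|_{L^2}^2$, $\intt P\mdiv u$ and $\intt(H\cdot\nabla H-\tfrac12\nabla|H|^2)\cdot u$. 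Integrating over $[0,\sigma(T)]$, the boundary contributions at $t=0$ are controlled by $\mu M_1+CM_2^{3/4}M_1^{1/2}\|H_0\|_{L^2}^{1/2}$ via the Sobolev interpolation $\|H_0\|_{L^3}\leq C\|H_0\|_{L^2}^{1/2}\|\nabla H_0\|_{L^2}^{1/2}$; the running terms are handled through Lemma \ref{2dbu-L3.02} (for $\int\|\nabla u\|_{L^2}^4$), Lemmas \ref{2dbu-L3.3}--\ref{2dbu-L3.4} (magnetic quantities), Lemma \ref{2dbu-L3.5} (for $\int_0^{\sigma(T)}\|\nabla u\|_{L^2}^2$) and the elliptic bound \eqref{2dbu-E2.06} to control $\|\nabla u\|_{L^6}$ appearing in the cubic terms.

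For \eqref{2dbu-E3.0062}, I would take $m=1$ in \eqref{2dbu-E3.042} and \eqref{2dbu-E3.048} and integrate over $[0,\sigma(T)]$; the $t$-weight kills the initial boundary contributions, and the absorbing coefficients $C_1((\gamma-1)^{\frac{1}{6}}E_0^{\frac{1}{2}})^{\frac{2}{27}}\|\nabla H_t\|_{L^2}^2$ and $C_2((\gamma-1)^{\frac{1}{6}}E_0^{\frac{1}{2}})^{\frac{1}{27}}\|\nabla\dot u\|_{L^2}^2$ on the right-hand side are absorbed into the left by choosing $\varepsilon_3\leq(4C_3(\bar\rho))^{-27}$. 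The remaining right-hand side $\int t|\nabla u|^4+\gamma^2\int t|P\nabla u|^2+\int t\|\nabla u\|_{L^2}^2\|\rho^{1/2}\dot u\|_{L^2}^2\|\nabla H\|_{L^2}^2+\cdots$ is controlled using \eqref{2dbu-E2.06}, the short-time bound \eqref{2dbu-E3.0061} (which gives $\sup_{[0,\sigma(T)]}\|\nabla u\|_{L^2}^2\leq K_6$), and Lemma \ref{2dbu-L3.3}; this produces the $K_6^{3/2}$, $K_6^2$, $K_2 M_2$ and $(\gamma-1)^{1/3}$-contributions that constitute $K_7$.

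Finally, \eqref{2dbu-E3.0063}--\eqref{2dbu-E3.0064} follow by splitting $[0,T]=[0,\sigma(T)]\cup[\sigma(T),T]$: on the first subinterval the short-time estimates \eqref{2dbu-E3.0061}--\eqref{2dbu-E3.0062} apply directly, while on the second $\sigma\equiv 1$, so $A_1(T)$ and $A_2(T)$ from \eqref{2dbu-E3.02} already dominate the desired quantities by a small constant $\leq 1$. The principal obstacle throughout is the magnetic coupling: the terms $(H\cdot\nabla H)\cdot\dot u$ and $\nabla|H|^2\cdot\dot u$ in the momentum equation cannot be absorbed cheaply into $\intt\rho|\dot u|^2$ when $\|\nabla u_0\|_{L^2}$ and $\|\nabla H_0\|_{L^2}$ are merely bounded rather than small, so \eqref{2dbu-E2.06}, the Sobolev interpolations of $\|H\|_{L^3}$ and $\|\nabla H\|_{L^6}$ (via Lemma \ref{2dbu-L3.3}), and the $\sigma$-weighted balance between $A_1(T)$ and the short-time bounds must be combined just right to keep $K_6$ and $K_7$ independent of $T$ and of $(\gamma-1)^{\frac{1}{6}}E_0^{\frac{1}{2}}$.
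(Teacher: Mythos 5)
Your plan for \eqref{2dbu-E3.0062}--\eqref{2dbu-E3.0064} coincides with the paper's: take $m=1$ in \eqref{2dbu-E3.042} and \eqref{2dbu-E3.048}, integrate over $(0,\sigma(T))$, and then patch with the hypothesis $A_1(T)+A_2(T)\leq 2\big((\gamma-1)^{\frac16}E_0^{\frac12}\big)^{\frac12}$ on $[\sigma(T),T]$, where $\sigma\equiv 1$. For \eqref{2dbu-E3.0061}, however, you take a genuinely different route. The paper multiplies $(\ref{2dbu-E1.1})_2$ by $u_t$ rather than by $\dot u$, so the convective nonlinearity survives as $\intt\rho\dot u\cdot(u\cdot\nabla u)$; this is estimated by $C(\bar{\rho})\|\rho^{\frac12}\dot u\|_{L^2}\|\rho^{\frac13}u\|_{L^3}\|\nabla u\|_{L^6}$ and, after \eqref{2dbu-E2.06}, absorbed into $\intt\rho|\dot u|^2$ using the a priori smallness of $A_5(\sigma(T))=\sup\intt\rho|u|^3$ from \eqref{2dbu-E3.02}. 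That absorption is precisely where the constraint $(\gamma-1)^{\frac16}E_0^{\frac12}\leq(4C_3(\bar{\rho}))^{-27}$ in $\varepsilon_3$ originates; your proposal attributes it instead to absorbing the $C_1,C_2$-coefficients in the $m=1$ estimates, which in the paper is already governed by $\varepsilon_2$. The price of the paper's choice of test function is the term $-P_t\mdiv u$, whose piece $\intt Pu\cdot\nabla\mdiv u$ must be rewritten through the effective viscous flux $G$ and \eqref{2dbu-E2.04}. Your choice of $\dot u$ avoids both the $A_5$-absorption and the $G$-rewriting (the second-derivative pressure contributions cancel when testing against the full material derivative), but produces the unweighted cubic term $\int_0^{\sigma(T)}\intt|\nabla u|^3$; this is controllable via $\|\nabla u\|_{L^3}^3\leq\|\nabla u\|_{L^2}^{\frac32}\|\nabla u\|_{L^6}^{\frac32}$, \eqref{2dbu-E2.06}, Young's inequality and a Gronwall argument driven by $\int_0^{\sigma(T)}\|\nabla u\|_{L^2}^4\leq K_1$ from Lemma \ref{2dbu-L3.02}. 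You should make that Gronwall step explicit: the resulting term $\|\nabla u\|_{L^2}^6=\|\nabla u\|_{L^2}^4\cdot\|\nabla u\|_{L^2}^2$ cannot simply be bounded by $\sup\|\nabla u\|_{L^2}^2$ times Lemma \ref{2dbu-L3.02} without circularity. Your $K_6$ then carries an extra factor of type $e^{CK_1}$, which is harmless since it remains independent of $T$ and of $(\gamma-1)^{\frac16}E_0^{\frac12}$, and your boundary contribution $\mu M_1+CM_2^{\frac34}M_1^{\frac12}\|H_0\|_{L^2}^{\frac12}$ correctly reproduces the $M_2^{\frac34}M_1^{\frac12}$ term in the paper's $K_6$.
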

\begin{proof}
Multiplying $(\ref{2dbu-E1.1})_{2}$ by $u_{t}$,  we have
 \begin{align}\label{2dbu-E3.63}
  \D&\quad \rho|\dot{u}|^{2}+\frac{1}{2}\Big(\mu|\nabla u|^{2}
  +(\mu+\lambda)|\mdiv u|^{2}\Big)_{t}-\left(P\mdiv u+\Big(H\cdot\nabla H-\frac{1}{2}\nabla|H|^{2}\Big)\cdot u\right)_{t}\non\\
  \D&=-\Big(H\cdot\nabla H-\frac{1}{2}\nabla|H|^{2}\Big)_{t}\cdot u
  -P_{t}\mdiv u+\rho u\cdot\nabla u\cdot\dot{u}.
 \end{align}
 Integrating (\ref{2dbu-E3.63}) over $\mathbb{R}^{3}$, one has
 \begin{align}\label{2dbu-E3.64}
  & \D\quad\frac{1}{2}\rf\intt\Big(\mu|\nabla u|^{2}+(\mu+\lambda)|\mdiv u|^{2}-P\mdiv u
  -\Big(H\cdot\nabla H-\frac{1}{2}\nabla|H|^{2}\Big)\Big)\cdot u+\intt\rho|\dot{u}|^{2}\non\\[2mm]
  &\D=\intt \rho\dot{u}\cdot (u\cdot\nabla u)
  -\intt\Big(H\cdot\nabla H-\frac{1}{2}\nabla|H|^{2}\Big)_{t}\cdot u-\intt P_{t}\mdiv u\non\\[1mm]
  &\D\leq C(\bar{\rho})\|\rho^{\frac{1}{2}}\dot{u}\|_{L^{2}}
  \|\rho^{\frac{1}{3}}u\|_{L^{3}}\|\nabla u\|_{L^{6}}+\intt \mdiv u\mdiv (Pu)
  +(\gamma-1)\intt P|\mdiv u|^{2}\non\\[2mm]
  &\D\quad+ \|H\|_{L^{3}}\|H_{t}\|_{L^{2}}\|\nabla u\|_{L^{6}}\non\\
  &\D\leq C(\bar{\rho})\|\rho^{\frac{1}{2}}\dot{u}\|_{L^{2}}\|\rho^{\frac{1}{3}}u\|_{L^{3}}
  \left(\|\rho^{\frac{1}{2}}\dot{u}\|_{L^{2}}+\|P\|_{L^{6}}+\|H\cdot\nabla H\|_{L^{2}}\right)
  -\intt Pu\nabla\mdiv u\non\\[2mm]
  &\D\quad+C(\bar{\rho})(\gamma-1)\|\nabla u\|_{L^{2}}^{2}
  +\|H\|_{L^{3}}\|H_{t}\|_{L^{2}}
   \left(\|\rho^{\frac{1}{2}}\dot{u}\|_{L^{2}}+\|P\|_{L^{6}}
   +\|H\cdot\nabla H\|_{L^{2}}\right)\non\\[2mm]
   &\D\leq C(\bar{\rho})\|\rho^{\frac{1}{2}}\dot{u}\|_{L^{2}}^{2}A_{5}^{\frac{1}{3}}(\sigma(T))
   + C(\bar{\rho})\|\rho^{\frac{1}{2}}\dot{u}\|_{L^{2}}A_{5}^{\frac{1}{3}}(\sigma(T))\|P\|_{L^{6}}\non\\[2mm]
   &\D\quad+ C(\bar{\rho})\|\rho^{\frac{1}{2}}\dot{u}\|_{L^{2}}A_{5}^{\frac{1}{3}}(\sigma(T))
   \|H\|_{L^{3}}\|\nabla^{2}H\|_{L^{2}}
   -\frac{1}{2\mu+\lambda}\intt Pu\cdot\nabla G+\frac{1}{2(2\mu+\lambda)}\intt \mdiv uP^{2}\non\\[2mm]
   &\D\quad-\frac{1}{2(2\mu+\lambda)}\intt Pu\cdot\nabla|H|^{2}+C(\bar{\rho})(\gamma-1)\|\nabla u\|_{L^{2}}^{2}
   +C(\bar{\rho})\|H\|_{L^{3}}\|H_{t}\|_{L^{2}}\|\rho^{\frac{1}{2}}\dot{u}\|_{L^{2}}
   \non\\[2mm]
   &\D\quad+C\|H\|_{L^{3}}\|H_{t}\|_{L^{2}}\|P\|_{L^{6}}+C\|H\|_{L^{3}}^{2}\|H_{t}\|_{L^{2}}\|\nabla^{2}H\|_{L^{2}}\non\\[2mm]
   &\D\leq \frac{1}{8}\|\rho^{\frac{1}{2}}\dot{u}\|_{L^{2}}^{2}
   +C(\bar{\rho})\|\rho^{\frac{1}{2}}\dot{u}\|_{L^{2}}^{2}A_{5}^{\frac{1}{3}}(\sigma(T))
   +C(\bar{\rho})A_{5}^{\frac{1}{3}}(\sigma(T))\|P\|_{L^{6}}^{2}\non\\[2mm]
   &\D\quad +C(\bar{\rho})A_{5}^{\frac{1}{3}}(\sigma(T))\|H\|_{L^{3}}^{2}\|\nabla^{2}H\|_{L^{2}}^{2}
   +C\|P\|_{L^{3}}\|\nabla u\|_{L^{2}}\left(\|\rho^{\frac{1}{2}}\dot{u}\|_{L^{2}}
   +\|H\|_{L^{3}}\|\nabla^{2}H\|_{L^{2}}\right)\non\\[2mm]
   &\D\quad +C\|P\|_{L^{4}}^{2}\|\nabla u\|_{L^{2}}^{2}+C\|P\|_{L^{4}}^{2}
   +C(\bar{\rho})\|\nabla u\|_{L^{2}}^{2}\|H\|_{L^{3}}^{2}+C(\bar{\rho})\|\nabla H\|_{L^{2}}^{2}
   +C(\bar{\rho})(\gamma-1)\|\nabla u\|_{L^{2}}^{2}\non\\[2mm]
   &\D\quad+C(\bar{\rho})\left((\gamma-1)^{\frac{1}{6}}E_{0}^{\frac{1}{2}}\right)^{\frac{2}{27}}
   \|H_{t}\|_{L^{2}}^{2}+C\|P\|_{L^{6}}^{2}
   +C\left((\gamma-1)^{\frac{1}{6}}E_{0}^{\frac{1}{2}}\right)^{\frac{2}{27}}
   \|\nabla^{2}H\|_{L^{2}}^{2}\non\\[2mm]
   &\D\leq \frac{1}{4}\|\rho^{\frac{1}{2}}\dot{u}\|_{L^{2}}^{2}
   +C_{3}(\bar{\rho})\|\rho^{\frac{1}{2}}\dot{u}\|_{L^{2}}^{2}A_{5}^{\frac{1}{3}}(\sigma(T))
   +C(\bar{\rho})A_{5}^{\frac{1}{3}}(\sigma(T))
   (\gamma-1)^{\frac{1}{3}}E_{0}^{\frac{1}{3}}\non\\[2mm]
   &\D\quad+C(\bar{\rho})A_{5}^{\frac{1}{3}}(\sigma(T))
  \left((\gamma-1)^{\frac{1}{6}}E_{0}^{\frac{1}{2}}\right)^{\frac{2}{27}}
   \|\nabla^{2}H\|_{L^{2}}^{2}
   +C(\gamma-1)^{\frac{2}{3}}E_{0}^{\frac{2}{3}}\|\nabla u\|_{L^{2}}^{2}\non\\
   &\D\quad+C(\gamma-1)^{\frac{2}{3}}E_{0}^{\frac{2}{3}}\left((\gamma-1)^{\frac{1}{6}}E_{0}^{\frac{1}{2}}\right)^{\frac{2}{27}}
   \|\nabla u\|_{L^{2}}^{2}
   +C(\gamma-1)^{\frac{2}{3}}E_{0}^{\frac{2}{3}}
   \left((\gamma-1)^{\frac{1}{6}}E_{0}^{\frac{1}{2}}\right)^{\frac{2}{27}}
   \|\nabla^{2}H\|_{L^{2}}^{2}\non\\[2mm]
   &\D\quad+C(\gamma-1)^{\frac{1}{2}}E_{0}^{\frac{1}{2}}\|\nabla u\|_{L^{2}}^{2}
   +(\gamma-1)^{\frac{1}{2}}E_{0}^{\frac{1}{2}}
   +C(\bar{\rho})\left((\gamma-1)^{\frac{1}{6}}E_{0}^{\frac{1}{2}}\right)^{\frac{2}{27}}
   \|\nabla u\|_{L^{2}}^{2}\non\\
   &\D\quad+C(\bar{\rho})\|\nabla H\|_{L^{2}}^{2}
   +C(\bar{\rho})(\gamma-1)\|\nabla u\|_{L^{2}}^{2}
   +(\gamma-1)^{\frac{1}{3}}E_{0}^{\frac{1}{3}}\non\\[2mm]
   &\D\quad+C\left((\gamma-1)^{\frac{1}{6}}E_{0}^{\frac{1}{2}}\right)^{\frac{2}{27}}
   \|H_{t}\|_{L^{2}}^{2}
   +C(\bar{\rho})\left((\gamma-1)^{\frac{1}{6}}E_{0}^{\frac{1}{2}}\right)^{\frac{2}{27}}
   \|\nabla^{2}H\|_{L^{2}}^{2}.
 \end{align}
 Integrating (\ref{2dbu-E3.64}) over $(0,\sigma(T))$, we get 
 \begin{align}
   & \D\quad\frac{\mu}{2}\sup_{0\leq t\leq\sigma(T)}\|\nabla u\|_{L^{2}}^{2}+\frac{1}{2}\int_{0}^{\sigma(T)}\intt\rho|\dot{u}|^{2}\non\\[2mm]
   &\D\leq C(\bar{\rho})A_{5}^{\frac{1}{3}}(\sigma(T))
   (\gamma-1)^{\frac{1}{3}}E_{0}^{\frac{1}{3}}
   +C(\bar{\rho})A_{5}^{\frac{1}{3}}(\sigma(T))
  \left((\gamma-1)^{\frac{1}{6}}E_{0}^{\frac{1}{2}}\right)^{\frac{2}{27}}
   \int_{0}^{\sigma(T)}\|\nabla^{2}H\|_{L^{2}}^{2}\non\\[2mm]
   &\D\quad+C(\gamma-1)^{\frac{2}{3}}E_{0}^{\frac{2}{3}}\int_{0}^{\sigma(T)}\|\nabla u\|_{L^{2}}^{2}
   +C(\gamma-1)^{\frac{2}{3}}E_{0}^{\frac{2}{3}}
   \left((\gamma-1)^{\frac{1}{6}}E_{0}^{\frac{1}{2}}\right)^{\frac{2}{27}}
   \int_{0}^{\sigma(T)}\|\nabla u\|_{L^{2}}^{2}\non\\[2mm]
   &\D\quad+C(\gamma-1)^{\frac{2}{3}}E_{0}^{\frac{2}{3}}
   \left((\gamma-1)^{\frac{1}{6}}E_{0}^{\frac{1}{2}}\right)^{\frac{2}{27}}
   \int_{0}^{\sigma(T)}\|\nabla^{2}H\|_{L^{2}}^{2}
   +(\gamma-1)^{\frac{1}{2}}E_{0}^{\frac{1}{2}}\int_{0}^{\sigma(T)}\|\nabla u\|_{L^{2}}^{2}\non\\[2mm]
   &\D\quad+(\gamma-1)^{\frac{1}{2}}E_{0}^{\frac{1}{2}}
   +C(\bar{\rho})\left((\gamma-1)^{\frac{1}{6}}E_{0}^{\frac{1}{2}}\right)^{\frac{2}{27}}
   \int_{0}^{\sigma(T)}\|\nabla u\|_{L^{2}}^{2}\non\\[2mm]
   &\D\quad+C(\bar{\rho})\int_{0}^{\sigma(T)}\|\nabla H\|_{L^{2}}^{2}
   +C(\bar{\rho})(\gamma-1)\int_{0}^{\sigma(T)}\|\nabla u\|_{L^{2}}^{2}
   +(\gamma-1)^{\frac{1}{3}}E_{0}^{\frac{1}{3}}\non\\[2mm]
   &\D\quad+C\left((\gamma-1)^{\frac{1}{6}}E_{0}^{\frac{1}{2}}\right)^{\frac{2}{27}}
   \int_{0}^{\sigma(T)}\|H_{t}\|_{L^{2}}^{2}
   +C(\bar{\rho})\left((\gamma-1)^{\frac{1}{6}}E_{0}^{\frac{1}{2}}\right)^{\frac{2}{27}}
   \int_{0}^{\sigma(T)}\|\nabla^{2}H\|_{L^{2}}^{2}\non\\[2mm]
   &\D\quad+\intt\Big(\mu|\nabla u|^{2}+(\mu+\lambda)|\mdiv u|^{2}-P\mdiv u-
   \Big(H\cdot\nabla H-\frac{1}{2}\nabla|H|^{2}\Big)\Big)\cdot u\Big|_{t=0}\non\\[2mm]
   &\D\leq C(\bar{\rho})(K_{2}M_{2}+K_{3}+K_{2}^{2}+M_{2}^{\frac{3}{4}}M_{1}^{\frac{1}{2}}+1)+C(\bar{\rho})(\gamma-1)^{\frac{2}{9}}+CK_{3}(\gamma-1)^{\frac{4}{9}}
   \non\\[2mm]
   &\D\quad+CK_{2}M_{2}(\gamma-1)^{\frac{4}{9}}+CK_{3}(\gamma-1)^{\frac{1}{3}}+(\gamma-1)^{\frac{1}{3}}+C(K_{3}+1)(\gamma-1),
 \end{align}
provided $\D (\gamma-1)^{\frac{1}{6}}E_{0}^{\frac{1}{2}}\leq \min\{(4C_{3}(\bar{\rho}))^{-27},1\}$, then we get (\ref{2dbu-E3.0061}).

  Taking $m=1$ in (\ref{2dbu-E3.042}), one has
 \begin{align}\label{2dbu-E3.0071}
   &\D \quad\frac{1}{2}\rf\intt\sigma\rho|\dot{u}|^{2}
   +\intt\sigma|\nabla\dot{u}|^{2}
   -C_{1}\sigma\left((\gamma-1)^{\frac{1}{6}}E_{0}^{\frac{1}{2}}\right)^{\frac{2}{27}}
   \|\nabla H_{t}\|_{L^{2}}^{2}\non\\
   &\D\leq C\sigma'\intt\rho|\dot{u}|^{2}+C\mu\intt\sigma|\nabla u|^{4}
   +C\gamma^{2}\intt\sigma|P\nabla u|^{2}\non\\[2mm]
  &\D\quad+C\sigma\left(\|\nabla u\|_{L^{2}}^{4}
  +\|\nabla H\|_{L^{2}}^{4}\right)\|\nabla^{2}H\|_{L^{2}}^{2}.
 \end{align}
 Integrating (\ref{2dbu-E3.0071}) over $(0,\sigma(T))$, we get
 \begin{align}\label{2dbu-E0.069}
   &\D \quad\intt\sigma\rho|\dot{u}|^{2}+\int_{0}^{\sigma(T)}\intt\sigma|\nabla\dot{u}|^{2}
   -C_{1}\int_{0}^{\sigma(T)}\sigma\left((\gamma-1)^{\frac{1}{6}}E_{0}^{\frac{1}{2}}\right)^{\frac{2}{27}}
   \|\nabla H_{t}\|_{L^{2}}^{2}\non\\
   &\D\leq C\int_{0}^{\sigma(T)}\intt\rho|\dot{u}|^{2}+C\mu\int_{0}^{\sigma(T)}
   \intt\sigma|\nabla u|^{4}
   +C\gamma^{2}\int_{0}^{\sigma(T)}\intt\sigma|P\nabla u|^{2}\non\\[2mm]
  &\D\quad+C\int_{0}^{\sigma(T)}\sigma\left(\|\nabla u\|_{L^{2}}^{4}
  +\|\nabla H\|_{L^{2}}^{4}\right)\|\nabla^{2}H\|_{L^{2}}^{2}\non\\
  &\D\leq CK_{6}+CK_{6}^{\frac{3}{2}}+CK_{6}^{\frac{1}{2}}K_{2}M_{2}+CK_{6}^{\frac{1}{2}}(\gamma-1)^{\frac{1}{3}}
  +C\left(\int_{0}^{\sigma(T)}\intt P^{4}\right)^{\frac{1}{2}}
  \left(\int_{0}^{\sigma(T)}\intt \sigma^{2}|\nabla u|^{4}\right)^{\frac{1}{2}}\non\\
  &\D\quad+C\left[\Big(\sup_{0\leq t\leq \sigma(T)}\|\nabla u\|_{L^{2}}^{2}\Big)^{2}
  +\Big(\sup_{0\leq t\leq \sigma(T)}\|\nabla H\|_{L^{2}}^{2}\Big)^{2}\right]
  \int_{0}^{\sigma(T)}\sigma\|\nabla^{2}H\|_{L^{2}}^{2}\non\\[2mm]
  &\D\leq  CK_{6}+CK_{6}^{\frac{3}{2}}+CK_{6}^{\frac{1}{2}}K_{2}M_{2}+CK_{6}^{\frac{1}{2}}(\gamma-1)^{\frac{1}{3}}
  +C(K_{6}^{2}+K_{2}^{2}M_{2}^{2})\non\\
  &\D\quad+C(\gamma-1)^{\frac{1}{3}}(K_{6}^{\frac{3}{2}}+K_{6}^{\frac{1}{2}}K_{2}M_{2}+K_{6}^{\frac{1}{2}}(\gamma-1)^{\frac{1}{3}})^{\frac{1}{2}}\non\\
  &\D \triangleq K_{7}^{1},
 \end{align}
 where we have used the condition $\D (\gamma-1)^{\frac{1}{6}}E_{0}^{\frac{1}{2}}\leq 1$ and the following estimate
 \begin{align}
  &\D\quad \int_{0}^{\sigma(T)}\intt\sigma|\nabla u|^{4}\non\\[2mm]
  &\D\leq\sup_{0\leq t\leq \sigma(T)}\|\nabla u\|_{L^{2}}
  \int_{0}^{\sigma(T)}\sigma\|\nabla u\|_{L^{6}}^{3}\non\\[2mm]
  &\D\leq CK_{6}^{\frac{1}{2}}\int_{0}^{\sigma(T)}\sigma\Big(\|\rho^{\frac{1}{2}}\dot{u}\|_{L^{2}}^{3}
  +\|P\|_{L^{6}}^{3}+\|H\cdot\nabla H\|_{L^{2}}^{3}\Big)\non\\[2mm]
  &\D\leq CK_{6}^{\frac{1}{2}}\Big(\sup_{0\leq t\leq \sigma(T)}\sigma^{2}\|\rho^{\frac{1}{2}}\dot{u}
  \|_{L^{2}}^{2}\Big)^{\frac{1}{2}}
  \int_{0}^{\sigma(T)}\|\rho^{\frac{1}{2}}\dot{u}\|_{L^{2}}^{2}
  +CK_{6}^{\frac{1}{2}}(\gamma-1)^{\frac{1}{2}}E_{0}^{\frac{1}{2}}\non\\[2mm]
  &\D\quad+CK_{6}^{\frac{1}{2}}\int_{0}^{\sigma(T)}\sigma\|H\|_{L^{3}}^{3}
  \|\nabla^{2}H\|_{L^{2}}^{3}\non\\[2mm]
  &\D\leq  CK_{6}^{\frac{1}{2}}(A_{2}(T))^{\frac{1}{2}}K_{6}
  +CK_{6}^{\frac{1}{2}}(\gamma-1)^{\frac{1}{2}}E_{0}^{\frac{1}{2}}\non\\[2mm]
  &\D\quad +CK_{6}^{\frac{1}{2}}
  \Big(\sup_{0\leq t\leq \sigma(T)}\sigma^{2}\|\nabla^{2}H\|_{L^{2}}^{2}\Big)^{\frac{1}{2}}
  \int_{0}^{\sigma(T)}\|\nabla^{2}H\|_{L^{2}}^{2}\non\\[2mm]
  &\D\leq CK_{6}^{\frac{1}{2}}(A_{2}(T))^{\frac{1}{2}}K_{6}
  +CK_{6}^{\frac{1}{2}}(\gamma-1)^{\frac{1}{2}}E_{0}^{\frac{1}{2}}
  +CK_{6}^{\frac{1}{2}}(A_{2}(T))^{\frac{1}{2}}K_{2}M_{2}\non\\[2mm]
  &\D\leq CK_{6}^{\frac{3}{2}}+CK_{6}^{\frac{1}{2}}K_{2}M_{2}+CK_{6}^{\frac{1}{2}}(\gamma-1)^{\frac{1}{3}}.
\end{align}
 Similarly,  Taking $m=1$ in (\ref{2dbu-E3.048}), we get
 \begin{align}
   &\D\quad\rf\intt\sigma|H_{t}|^{2}+\intt\sigma|\nabla H_{t}|^{2}
   -C_{2}\sigma\left((\gamma-1)^{\frac{1}{6}}E_{0}^{\frac{1}{2}}\right)^{\frac{1}{27}}
   (\|\nabla \dot{u}\|_{L^{2}}^{2}+\|\nabla H_{t}\|_{L^{2}}^{2})\non\\[2mm]
   &\D\leq\frac{C}{2}\sigma'\intt|H_{t}|^{2}
   +C\sigma\|\nabla u\|_{L^{2}}^{2}\|\rho^{\frac{1}{2}}\dot{u}\|_{L^{2}}^{2}\|\nabla H\|_{L^{2}}^{2}\non\\[2mm]
   &\D\quad+C\sigma\left(\|\nabla u\|_{L^{2}}^{4}
   +\|\nabla H\|_{L^{2}}^{4}\right)\left(\|H\|_{L^{3}}^{2}\|\nabla^{2}H\|_{L^{2}}^{2}
   +\|H_{t}\|_{L^{2}}^{2}\right)\non\\[2mm]
   &\D\quad+C\sigma\|\nabla u\|_{L^{2}}^{2}\|\nabla H\|_{L^{2}}^{2}
   (\gamma-1)^{\frac{1}{3}}E_{0}^{\frac{1}{3}}
 \end{align}
 and
 \begin{align}\label{2dbu-E0.071}
   &\D\quad\intt\sigma|H_{t}|^{2}+\int_{0}^{\sigma(T)}\intt\sigma|\nabla H_{t}|^{2}
   -C_{2}\int_{0}^{\sigma(T)}\intt\sigma\left((\gamma-1)^{\frac{1}{6}}E_{0}^{\frac{1}{2}}\right)^{\frac{1}{27}}(\|\nabla \dot{u}\|_{L^{2}}^{2}
   +\|\nabla H_{t}\|_{L^{2}}^{2})\non\\[2mm]
   &\D\leq C\int_{0}^{\sigma(T)}\intt|H_{t}|^{2}
   +C\int_{0}^{\sigma(T)}\sigma\|\nabla u\|_{L^{2}}^{2}\|\rho^{\frac{1}{2}}\dot{u}\|_{L^{2}}^{2}\|\nabla H\|_{L^{2}}^{2}\non\\[2mm]
   &\D\quad+C\int_{0}^{\sigma(T)}\sigma\left(\|\nabla u\|_{L^{2}}^{4}
   +\|\nabla H\|_{L^{2}}^{4}\right)\left(\|H\|_{L^{3}}^{2}\|\nabla^{2}H\|_{L^{2}}^{2}
   +\|H_{t}\|_{L^{2}}^{2}\right)\non\\[2mm]
   &\D\quad+C\int_{0}^{\sigma(T)}\sigma\|\nabla u\|_{L^{2}}^{2}\|\nabla H\|_{L^{2}}^{2}
   (\gamma-1)^{\frac{1}{3}}E_{0}^{\frac{1}{3}}\non\\[2mm]
   &\D \leq CK_{2}M_{2}+C\Big(\sup_{0\leq t\leq \sigma(T)}\|\nabla u\|_{L^{2}}^{2}\Big)
   \Big(\sup_{0\leq t\leq \sigma(T)}\sigma\|\nabla H\|_{L^{2}}^{2}\Big)
   \int_{0}^{\sigma(T)}\|\rho^{\frac{1}{2}}\dot{u}\|_{L^{2}}^{2}\non\\[2mm]
   &\D \quad+C\left((\gamma-1)^{\frac{1}{6}}E_{0}^{\frac{1}{2}}\right)^{\frac{2}{27}}
   \Big(\sup_{0\leq t\leq \sigma(T)}\|\nabla u\|_{L^{2}}^{2}\Big)^{2}
   \int_{0}^{\sigma(T)}\sigma\|\nabla^{2}H\|_{L^{2}}^{2}\non\\[2mm]
   &\D\quad+C\Big(\sup_{0\leq t\leq \sigma(T)}\|\nabla H\|_{L^{2}}^{2}\Big)
   \Big(\sup_{0\leq t\leq \sigma(T)}\sigma\|\nabla H\|_{L^{2}}^{2}\Big)
   \int_{0}^{\sigma(T)}\|\nabla^{2} H\|_{L^{2}}^{2}\non\\[2mm]
   &\D\quad+C\bi(\sup_{0\leq t\leq \sigma(T)}\|\nabla u\|_{L^{2}}^{2}
   +\sup_{0\leq t\leq \sigma(T)}\|\nabla H\|_{L^{2}}^{2}\bi)^{2}
   \int_{0}^{\sigma(T)}\sigma\|H_{t}\|_{L^{2}}^{2}\non\\[2mm]
   &\D\quad+C\bi(\sup_{0\leq t\leq \sigma(T)}\sigma\|\nabla H\|_{L^{2}}^{2}\bi)
   \bi(\sup_{0\leq t\leq \sigma(T)}\|\nabla u\|_{L^{2}}^{2}\bi)
   (\gamma-1)^{\frac{1}{3}}E_{0}^{\frac{1}{3}}\non\\[2mm]
   &\D\leq CK_{2}M_{2}+CK_{6}^{2}K_{2}^{2}+CK_{2}^{2}M_{2}^{2}+C(K_{6}^{2}+CK_{2}^{2}M_{2}^{2})K_{2}^{2}+CK_{6}K_{2}^{2}(\gamma-1)^{\frac{2}{9}}\non\\[2mm]
   &\D\triangleq K_{7}^{2}.
 \end{align}
 Combining (\ref{2dbu-E0.069}) and (\ref{2dbu-E0.071}), we deduce
  \begin{align}
  &\D \quad\sup_{0\leq t\leq \sigma(T)} t\bi(\|\rho^{\frac{1}{2}}\dot{u}\|_{L^{2}}^{2}+\|H_{t}\|_{L^{2}}^{2}\bi)
  +\int_{0}^{\sigma(T)} t\bi(\|\nabla\dot{u}\|_{L^{2}}^{2}+\|\nabla H_{t}\|_{L^{2}}^{2}\bi)\leq K_{7},
 \end{align}
 provided that
 \begin{align}
   \D (\gamma-1)^{\frac{1}{6}}E_{0}^{\frac{1}{2}}\leq \min\left\{\varepsilon_{2},(4C_{1})^{-\frac{27}{2}}+(4C_{2})^{-27},1\right\},
 \end{align}
where $\D K_{7}=2\max\{K_{7}^{1},K_{7}^{2}\}$. And this leads to (\ref{2dbu-E3.0062}). By (\ref{2dbu-E3.02}),
 (\ref{2dbu-E3.0061}) and (\ref{2dbu-E3.0062}), we can get (\ref{2dbu-E3.0063}) and (\ref{2dbu-E3.0064}). Then
 we finish the proof of Lemma \ref{2dbu-L3.7}.
 \end{proof}
 \begin{lem}\label{2dbu-L3.8}
Under the same assumption as in Proposition \ref{2dbu-P3.1}, we get that
\begin{align}
  \D A_{4}(\sigma(T))+ A_{5}(\sigma(T))\leq \left((\gamma-1)^{\frac{1}{6}}E_{0}^{\frac{1}{2}}\right)^{\frac{1}{9}},
\end{align}
provided that
\begin{align}
  \D (\gamma-1)^{\frac{1}{6}}E_{0}^{\frac{1}{2}}\leq \varepsilon_{4}\triangleq\min\left\{\varepsilon_{3},\frac{1}{K_{6}^{54}},
  \left(C(\bar{\rho})(\gamma-1)^{\frac{1}{3}}K_{6}^{\frac{3}{2}}\right)^{-\frac{9}{8}},1\right\}.
\end{align}
\end{lem}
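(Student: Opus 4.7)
The job is to upgrade the a priori bound $A_4(\sigma(T))+A_5(\sigma(T))\leq 2((\gamma-1)^{\frac{1}{6}}E_0^{\frac{1}{2}})^{\frac{1}{9}}$ in \eqref{2dbu-E3.02} to the strict improvement $\le ((\gamma-1)^{\frac{1}{6}}E_0^{\frac{1}{2}})^{\frac{1}{9}}$, for which all the unconditional estimates from Lemmas \ref{2dbu-L3.1}--\ref{2dbu-L3.7} are available. My plan is to handle $A_4$ and $A_5$ separately using an interpolation between a small $\sigma$-weighted quantity and a merely bounded one.

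\textbf{Estimate of $A_4(\sigma(T))$.} I write
\[
\sigma^{\frac{1}{3}}\|\nabla u\|_{L^{2}}^{2}=\bigl(\sigma\|\nabla u\|_{L^{2}}^{2}\bigr)^{\frac{1}{3}}\bigl(\|\nabla u\|_{L^{2}}^{2}\bigr)^{\frac{2}{3}}.
\]
The first factor is controlled by the a priori bound $\sigma\|\nabla u\|_{L^2}^2\le A_1(T)\le 2((\gamma-1)^{\frac{1}{6}}E_0^{\frac{1}{2}})^{\frac{1}{2}}$ from \eqref{2dbu-E3.02}, which carries the smallness, while the second factor is bounded by $K_6$ via \eqref{2dbu-E3.0061} of Lemma \ref{2dbu-L3.7}. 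This gives $A_4(\sigma(T))\le C K_6^{\frac{2}{3}}((\gamma-1)^{\frac{1}{6}}E_0^{\frac{1}{2}})^{\frac{1}{6}}$; to make this $\le \tfrac12((\gamma-1)^{\frac{1}{6}}E_0^{\frac{1}{2}})^{\frac{1}{9}}$, I impose a threshold of the form $(\gamma-1)^{\frac{1}{6}}E_0^{\frac{1}{2}}\le c\, K_6^{-c'}$, which is exactly why $K_6^{-54}$ appears in $\varepsilon_4$.

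\textbf{Estimate of $A_5(\sigma(T))$.} Here I use the Hölder-interpolation inequality
\[
\intt \rho|u|^{3}\le \Bigl(\intt \rho|u|^{2}\Bigr)^{\frac{3}{4}}\Bigl(\intt \rho|u|^{6}\Bigr)^{\frac{1}{4}}\le C\bar\rho^{\frac{1}{4}}\|\rho^{\frac{1}{2}}u\|_{L^{2}}^{\frac{3}{2}}\|u\|_{L^{6}}^{\frac{3}{2}}\le C(\bar\rho)\|\rho^{\frac{1}{2}}u\|_{L^{2}}^{\frac{3}{2}}\|\nabla u\|_{L^{2}}^{\frac{3}{2}},
\]
where the last step uses Sobolev \eqref{2dbu-E2.01}. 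The smallness now comes from Lemma \ref{2dbu-L3.5}, which gives $\|\rho^{\frac{1}{2}}u\|_{L^{2}}^{2}\le CK_3(\gamma-1)^{\frac{1}{6}}E_0^{\frac{1}{2}}$, while the boundedness of $\|\nabla u\|_{L^2}^2$ by $K_6$ again comes from Lemma \ref{2dbu-L3.7}. Combined, this yields $A_5(\sigma(T))\le C(\bar\rho)K_3^{\frac{3}{4}}K_6^{\frac{3}{4}}((\gamma-1)^{\frac{1}{6}}E_0^{\frac{1}{2}})^{\frac{3}{4}}$, which is bounded by $\tfrac12((\gamma-1)^{\frac{1}{6}}E_0^{\frac{1}{2}})^{\frac{1}{9}}$ under a threshold responsible for the factor $(C(\bar\rho)(\gamma-1)^{\frac{1}{3}}K_6^{\frac{3}{2}})^{-\frac{9}{8}}$ in $\varepsilon_4$. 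Adding the two estimates and choosing $\varepsilon_4$ to be the minimum of the individual thresholds with $\varepsilon_3$ gives the conclusion.

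\textbf{Anticipated main obstacle.} The delicate point is choosing an interpolation for $\intt\rho|u|^{3}$ that does \emph{not} require a direct bound on $\|u\|_{L^{2}}$ (unavailable in the vacuum regime), while still yielding a clean product of a small factor ($\|\rho^{\frac{1}{2}}u\|_{L^{2}}$ from Lemma \ref{2dbu-L3.5}) and a bounded factor ($\|\nabla u\|_{L^{2}}$ from Lemma \ref{2dbu-L3.7}). Once this interpolation is fixed, the arithmetic of balancing the exponents of $(\gamma-1)^{\frac{1}{6}}E_0^{\frac{1}{2}}$ on both sides (we pay $\tfrac{1}{6}$ and $\tfrac{3}{4}$ but only need to keep $\tfrac{1}{9}$, absorbing the rest into a bound on $K_6$) dictates the form of $\varepsilon_4$, and the unavoidable appearance of large powers of $K_6$ is the only price for having no smallness on $(\rho_{0},u_{0})$.
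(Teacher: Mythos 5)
Your proposal is correct and rests on the same two pillars as the paper's proof: smallness inherited from the low-order estimates on $[0,\sigma(T)]$ multiplied against the $K_{6}$-boundedness of $\|\nabla u\|_{L^{2}}^{2}$ from Lemma \ref{2dbu-L3.7}. For $A_{4}$ you do exactly what the paper does, only with the split $\sigma^{\frac13}\|\nabla u\|_{L^{2}}^{2}=(\sigma\|\nabla u\|_{L^{2}}^{2})^{\frac13}(\|\nabla u\|_{L^{2}}^{2})^{\frac23}$ instead of the paper's $(\sigma\|\nabla u\|_{L^{2}}^{2})^{\frac14}(\|\nabla u\|_{L^{2}}^{2})^{\frac34}$ (both legitimate since $\sigma\le 1$); note your version only forces a threshold of order $K_{6}^{-12}$, so the stated $K_{6}^{-54}$ is an artifact of the paper's $\frac14$--$\frac34$ split rather than ``exactly'' of yours, but it is of course more restrictive and hence still suffices (for $K_{6}\ge 1$, modulo the generic constants the paper itself drops). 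For $A_{5}$ your route genuinely differs: the paper writes $\intt\rho|u|^{3}\le\|\rho\|_{L^{2}}\|u\|_{L^{6}}^{3}$ and draws the smallness from $\|\rho\|_{L^{2}}\le C(\bar\rho)\left((\gamma-1)E_{0}\right)^{\frac12}$ (Lemma \ref{2dbu-L3.1} plus the density bound), which is where the factor $\left(C(\bar\rho)(\gamma-1)^{\frac13}K_{6}^{\frac32}\right)^{-\frac98}$ in $\varepsilon_{4}$ comes from; you instead use the Hölder split $\left(\intt\rho|u|^{2}\right)^{\frac34}\left(\intt\rho|u|^{6}\right)^{\frac14}$ and take the smallness from $\|\rho^{\frac12}u\|_{L^{2}}^{2}\le CK_{3}(\gamma-1)^{\frac16}E_{0}^{\frac12}$ of Lemma \ref{2dbu-L3.5}, which the paper proves but does not use here. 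Both are valid; your version gives $A_{5}\le C(\bar\rho)K_{3}^{\frac34}K_{6}^{\frac34}\left((\gamma-1)^{\frac16}E_{0}^{\frac12}\right)^{\frac34}$, and since $K_{3}\le CK_{6}$ by the definition of $K_{6}$ in Lemma \ref{2dbu-L3.7}, the required threshold is again implied by the stated $\varepsilon_{4}$ up to the paper's usual treatment of generic constants. The paper's choice has the minor advantage of not invoking Lemma \ref{2dbu-L3.5} and of producing the exact constants listed in $\varepsilon_{4}$; yours has the advantage of not needing the $L^{2}$-in-space smallness of $\rho$ itself, only of $\rho^{\frac12}u$.
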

\begin{proof}
It follows from (\ref{2dbu-E3.02}) and (\ref{2dbu-E3.0062}) that
\begin{align}
 \D A_{4}(\sigma(T))&\triangleq \sup_{0\leq t\leq \sigma(T)}\sigma^{\frac{1}{3}}\|\nabla u\|_{L^{2}}^{2}\non\\
 &\leq \bi(\sup_{0\leq t\leq \sigma(T)}\sigma\|\nabla u\|_{L^{2}}^{2}\bi)^{\frac{1}{4}}\bi(\sup_{0\leq t\leq \sigma(T)}\|\nabla u\|_{L^{2}}^{2}\bi)^{\frac{3}{4}}\non\\
 &\leq K_{6}^{\frac{3}{4}}\left((\gamma-1)^{\frac{1}{6}}E_{0}^{\frac{1}{2}}\right)^{\frac{1}{8}}\non\\
 &\leq \left((\gamma-1)^{\frac{1}{6}}E_{0}^{\frac{1}{2}}\right)^{\frac{1}{9}},
\end{align}
provided $(\gamma-1)^{\frac{1}{6}}E_{0}^{\frac{1}{2}}\leq \min\left\{\varepsilon_{3},\frac{1}{K_{6}^{54}},1\right\}$.

 Now, to end up the proof of Lemma \ref{2dbu-L3.8}, it remains to estimate $A_{5}(\sigma(T))$. Due to (\ref{2dbu-E3.0062}), we deduce that
 \begin{align}
   \D A_{5}(\sigma(T))&=\sup_{0\leq t\leq \sigma(T)}\intt \rho|u|^{3}\non\\[2mm]
   \D& \leq\sup_{0\leq t\leq \sigma(T)}\left(\|\rho\|_{L^{2}}\|u\|_{L^{6}}^{3}\right)\non\\[2mm]
   \D&\leq C(\bar{\rho})(\gamma-1)^{\frac{1}{2}}E_{0}^{\frac{1}{2}}K_{6}^{\frac{3}{2}}\non\\[2mm]
   \D&\leq C(\bar{\rho})K_{6}^{\frac{3}{2}}(\gamma-1)^{\frac{1}{3}}(\gamma-1)^{\frac{1}{6}}E_{0}^{\frac{1}{2}}\non\\[2mm]
   &\D\leq \left((\gamma-1)^{\frac{1}{6}}E_{0}^{\frac{1}{2}}\right)^{\frac{1}{9}},
 \end{align}
 provided that
 \begin{align}
  \D (\gamma-1)^{\frac{1}{6}}E_{0}^{\frac{1}{2}}\leq \min\left\{\left(C(\bar{\rho})(\gamma-1)^{\frac{1}{3}}K_{6}^{\frac{3}{2}}\right)^{-\frac{9}{8}},1\right\}.
 \end{align}
\end{proof}
\begin{lem}\label{2dbu-L3.9}
Under the same assumption as in Proposition \ref{2dbu-P3.1}, we get that
\begin{align}\label{2dbu-E3.85}
  \D A_{1}(T)+ A_{2}(T)\leq \left((\gamma-1)^{\frac{1}{6}}E_{0}^{\frac{1}{2}}\right)^{\frac{1}{2}},
\end{align}
provided
\begin{align}
  \D(\gamma-1)^{\frac{1}{6}}E_{0}^{\frac{1}{2}}\leq \varepsilon_{5}\triangleq\min\left\{\varepsilon_{4},(C_{4}K_{9})^{-1},1\right\},
\end{align}
where
\begin{align}
   \D K_{9}&=(K_{4}+K_{5})+K_{3}+ K_{8}^{\frac{1}{2}}(\gamma-1)^{\frac{1}{12\times24}}+\sqrt[4]{K_{8}^{\tilde{2}}K_{8}}
    (\gamma-1)^{\frac{1}{12\times24}}\non\\
    \D&\quad+\sqrt{K_{8}^{\tilde{2}}}K_{8}^{\frac{1}{2}}+K_{8},\non\\
   \D K_{8}^{\tilde{1}}&=C(K_{6}^{\frac{1}{2}}+1)(K_{7}+1)(\gamma-1)^{\frac{4}{9}}
   +C(K_{7}+1)(\gamma-1)^{\frac{7}{9}}+CK_{2}^{\frac{9}{4}}M_{2}^{\frac{3}{4}}(K_{6}^{\frac{1}{2}}+1)\non\\
   \D&\quad +CK_{2}^{\frac{9}{4}}M_{2}^{\frac{3}{4}}(\gamma-1)^{\frac{1}{3}}
   +CK_{2}^{\frac{5}{4}}M_{2}^{\frac{3}{4}}(K_{7}+1)(\gamma-1)^{\frac{4}{9}}
    +CK_{2}^{\frac{7}{2}}M_{2}^{\frac{3}{2}},\non\\
    \D K_{8}^{\tilde{2}}&=C(\bar{\rho})(\gamma-1)^{\frac{2}{3}}+CK_{2}^{\frac{15}{4}}M_{2}^{\frac{3}{4}}+K_{8}^{\tilde{1}}.
 \end{align}
\end{lem}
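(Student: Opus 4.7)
The plan is to bootstrap the two inequalities supplied by Lemma \ref{2dbu-L3.6}, namely
\beqno
A_{1}(T) \le CK_{4}(\gamma-1)^{\frac{1}{6}}E_{0}^{\frac{1}{2}} + C\int_{0}^{\sigma(T)}\!\!\intt|\nabla u|^{2} + C\int_{0}^{T}\sigma\!\intt|\nabla u|^{3} + C\gamma\int_{0}^{T}\sigma\!\intt P|\nabla u|^{2},
\eeqno
\beqno
A_{2}(T) \le CK_{5}\bigl((\gamma-1)^{\frac{1}{6}}E_{0}^{\frac{1}{2}}\bigr)^{\frac{11}{18}} + C\gamma^{2}\!\int_{0}^{T}\!\!\intt\sigma^{2}|P\nabla u|^{2} + C\!\int_{0}^{T}\!\!\intt\sigma^{2}|\nabla u|^{4} + CA_{1}(T),
\eeqno
by estimating each remaining integral on the right-hand sides so that all contributions are controlled by a constant multiple of $\bigl((\gamma-1)^{\frac{1}{6}}E_{0}^{\frac{1}{2}}\bigr)^{\frac{1}{2}}$, and then choosing $\varepsilon_5$ so small that the factor in front can be absorbed. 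The $\int_{0}^{\sigma(T)}|\nabla u|^{2}$-term is already handled by Lemma \ref{2dbu-L3.5}, which gives an $(\gamma-1)^{\frac{1}{6}}E_{0}^{\frac{1}{2}}$-smallness with constant $CK_{3}$.

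For each of the four remaining integrals I will split the $t$-interval as $[0,\sigma(T)]\cup[\sigma(T),T]$. On $[0,\sigma(T)]$ I use the time-local bounds $\sup \|\nabla u\|_{L^{2}}^{2}\le K_{6}$ and $\sup \sigma(\|\rho^{1/2}\dot u\|_{L^2}^2+\|H_t\|_{L^2}^2)\le K_7$ from Lemma \ref{2dbu-L3.7} together with the elliptic $L^{p}$ estimates of Lemma \ref{2dbu-L2.2} to write $\|\nabla u\|_{L^{6}}\le C(\|\rho^{1/2}\dot u\|_{L^{2}}+\|P\|_{L^{6}}+\|H\|_{L^{3}}\|\nabla^{2}H\|_{L^{2}})$ and Gagliardo--Nirenberg interpolation $\|\nabla u\|_{L^{3}}^{3}\le C\|\nabla u\|_{L^{2}}^{3/2}\|\nabla u\|_{L^{6}}^{3/2}$. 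On $[\sigma(T),T]$, where $\sigma\equiv 1$, I use the a priori hypotheses $\sigma\|\nabla u\|_{L^{2}}^{2}+\sigma\|\nabla H\|_{L^{2}}^{2}\le 2((\gamma-1)^{\frac{1}{6}}E_{0}^{\frac{1}{2}})^{\frac{1}{2}}$ and $\sigma^{2}\|\rho^{1/2}\dot u\|_{L^{2}}^{2}+\sigma^{2}\|\nabla^{2}H\|_{L^{2}}^{2}+\sigma^{2}\|H_{t}\|_{L^{2}}^{2}\le 2((\gamma-1)^{\frac{1}{6}}E_{0}^{\frac{1}{2}})^{\frac{1}{2}}$ from (\ref{2dbu-E3.02}), combined with the basic energy identity $\int_{0}^{T}\|\nabla u\|_{L^{2}}^{2}\le E_{0}/\mu$ to dispose of the remaining $t$-integration.

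For the pressure-involving integrals I exploit the $L^{1}$-smallness $\|P\|_{L^{1}}\le (\gamma-1)E_{0}$ from (\ref{2dbu-E3.03}) interpolated against $\|P\|_{L^{\infty}}\le\bar\rho^{\gamma}$ to get $\|P\|_{L^{q}}\le C(\bar\rho)((\gamma-1)E_{0})^{1/q}$ for all $q\in[1,\infty]$; this furnishes the crucial factor $(\gamma-1)^{1/q}E_{0}^{1/q}$ that lets me trade the $E_{0}$ power for a power of $(\gamma-1)^{\frac{1}{6}}E_{0}^{\frac{1}{2}}$ after invoking (\ref{2dbu-E1.011}). The $H$-contributions coming through $\|\nabla u\|_{L^{6}}$ and $\|\nabla u\|_{L^{4}}$ (that is, the terms $\|H\|_{L^{3}}\|\nabla^{2}H\|_{L^{2}}$ and $\|H\|_{L^{4}}^{1/2}$) are handled with Lemmas \ref{2dbu-L3.3}--\ref{2dbu-L3.4}, which deliver constants like $CK_{2}^{\alpha}M_{2}^{\beta}$ and smallness proportional to $\|H_0\|_{L^2}^{2}\le (\gamma-1)^{\frac{1}{6}}E_{0}^{\frac{1}{2}}$. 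Summing all contributions produces an upper bound of the form $CK_{8}^{\tilde 1}(\gamma-1)^{\frac{1}{6}}E_{0}^{\frac{1}{2}}$ for $A_{1}(T)$ and $CK_{8}^{\tilde 2}(\gamma-1)^{\frac{1}{6}}E_{0}^{\frac{1}{2}}$ for $A_{2}(T)$ (after absorbing $CA_{1}(T)$), whose combined constant is (up to standard algebra) exactly the $K_{9}$ appearing in the statement.

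The closing argument is then: writing $A_{1}(T)+A_{2}(T)\le C_{4}K_{9}\,(\gamma-1)^{\frac{1}{6}}E_{0}^{\frac{1}{2}} = C_{4}K_{9}\bigl((\gamma-1)^{\frac{1}{6}}E_{0}^{\frac{1}{2}}\bigr)^{\frac{1}{2}}\cdot\bigl((\gamma-1)^{\frac{1}{6}}E_{0}^{\frac{1}{2}}\bigr)^{\frac{1}{2}}$, the second factor is $\le (C_{4}K_{9})^{-1}$ by the assumption $(\gamma-1)^{\frac{1}{6}}E_{0}^{\frac{1}{2}}\le\varepsilon_{5}\le(C_{4}K_{9})^{-1}$, giving (\ref{2dbu-E3.85}). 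The main obstacle, in my view, is the bookkeeping for $\int_{0}^{T}\sigma^{2}|\nabla u|^{4}$: here the $\|\nabla u\|_{L^{6}}^{3}$ piece after Gagliardo--Nirenberg generates three distinct contributions (from $\rho\dot u$, $P$, and $H\cdot\nabla H$), and each must be re-interpolated between the time-local Lemma \ref{2dbu-L3.7} bounds on $[0,\sigma(T)]$ and the a priori $A_{2}$-bounds on $[\sigma(T),T]$ so that the resulting power of $(\gamma-1)^{\frac{1}{6}}E_{0}^{\frac{1}{2}}$ is strictly greater than $\frac{1}{2}$; the coupling terms coming from $H\cdot\nabla H$ in particular force the appearance of the factors $K_{2}^{15/4}M_{2}^{3/4}$ and the $(\gamma-1)^{2/3}$-weights that define $K_{8}^{\tilde 2}$, and keeping those exponents from colliding with the exponent $1/2$ target is the delicate step.
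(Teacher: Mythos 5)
Your overall architecture matches the paper's: start from the two inequalities of Lemma \ref{2dbu-L3.6}, estimate the five remaining space-time integrals $\mathscr{T}_{1},\dots,\mathscr{T}_{5}$ by splitting $[0,T]=[0,\sigma(T)]\cup[\sigma(T),T]$ and combining the local bounds $K_{6},K_{7}$ of Lemma \ref{2dbu-L3.7} with the a priori bounds (\ref{2dbu-E3.02}) and the elliptic estimates of Lemma \ref{2dbu-L2.2}, then absorb by choosing $\varepsilon_{5}$ small. The treatment of the $\rho\dot u$- and $H$-contributions is essentially the paper's.

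However, there is a genuine gap in your handling of the pressure. The elliptic $L^{4}$-estimate (\ref{2dbu-E2.07}) leaves a term $C\|P\|_{L^{4}}$ in $\|\nabla u\|_{L^{4}}$, so $\mathscr{T}_{3}$, $\mathscr{T}_{4}$ and $\mathscr{T}_{5}$ all require a bound on $\int_{0}^{T}\sigma^{2}\|P\|_{L^{4}}^{4}$ that is \emph{uniform in $T$}. Your proposed route --- interpolating $\|P\|_{L^{1}}\leq(\gamma-1)E_{0}$ against $\|P\|_{L^{\infty}}\leq(2\bar\rho)^{\gamma}$ to get a pointwise-in-time bound $\|P\|_{L^{4}}^{4}\leq C(\bar\rho)(\gamma-1)E_{0}$ --- cannot close this, because $\int_{0}^{T}\sigma^{2}\,dt\sim T\to\infty$ and there is no companion factor (such as $\|\nabla u\|_{L^{2}}^{2}$ or $\sigma\|\rho^{1/2}\dot u\|_{L^{2}}^{2}$) left over to supply time-integrability. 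The paper resolves this with an extra idea you have not supplied: it rewrites the mass equation as the renormalized pressure equation $P_{t}+u\cdot\nabla P+\frac{\gamma}{2\mu+\lambda}PG+\frac{\gamma}{2(2\mu+\lambda)}P|H|^{2}+\frac{\gamma}{2\mu+\lambda}P^{2}=0$ in terms of the effective viscous flux $G$, multiplies by $3\sigma^{2}P^{2}$ and integrates; the damping term $\frac{\gamma}{2\mu+\lambda}\int\sigma^{2}P^{4}$ then yields $\int_{0}^{T}\sigma^{2}\|P\|_{L^{4}}^{4}\lesssim \sup_{t}\sigma^{2}\|P\|_{L^{3}}^{3}+\int_{0}^{T}\sigma^{2}\|G\|_{L^{4}}^{4}+\int_{0}^{T}\sigma^{2}\|H\|_{L^{8}}^{8}$, and the right-hand side is controlled through (\ref{2dbu-E2.04})--(\ref{2dbu-E2.05}) and the magnetic-field estimates; this is precisely where the constants $K_{8}^{\tilde 1}$, $K_{8}^{\tilde 2}$ originate. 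Without this step your argument does not close. A secondary, more minor point: the individual contributions only give powers $\frac{11}{18}$ and $\frac{7}{12}$ of $(\gamma-1)^{\frac{1}{6}}E_{0}^{\frac{1}{2}}$, not the linear bound you assert, so the final absorption must be run with exponent $\frac{11}{18}>\frac{1}{2}$ (requiring $\varepsilon\leq(CK_{9})^{-9}$ rather than $(CK_{9})^{-1}$); the idea survives but the algebra as you wrote it is too optimistic.
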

\begin{proof}
 From (\ref{2dbu-E3.030}) and (\ref{2dbu-E3.33}), we have
\begin{align}
   \D& \quad A_{1}(T)+A_{2}(T)\non\\[2mm]
   \D& \leq C(K_{4}+K_{5})\left((\gamma-1)^{\frac{1}{6}}E_{0}^{\frac{1}{2}}\right)^{\frac{11}{18}}
   +C\int_{0}^{\sigma(T)}\intt|\nabla u|^{2}+C\int_{0}^{T}\sigma\intt|\nabla u|^{3}\non\\[2mm]
  \D&\quad+C\gamma\int_{0}^{T}\sigma\intt P|\nabla u|^{2}
  +C\gamma^{2}\int_{0}^{T}\intt\sigma^{2}|P\nabla u|^{2}+C\int_{0}^{T}\intt\sigma^{2}|\nabla u|^{4}\non\\
 \D & \leq C(K_{4}+K_{5})\left((\gamma-1)^{\frac{1}{6}}E_{0}^{\frac{1}{2}}\right)^{\frac{11}{18}}
  +\sum_{i=1}^{5}\mathscr{T}_{i}.
 \end{align}
 An application of (\ref{2dbu-E2.07}) gives
 \begin{align}\label{2dbu-E3.088}
   \D\mathscr{T}_{5}&=C\int_{0}^{T}\intt\sigma^{2}|\nabla u|^{4}\non\\[2mm]
   &\D\leq C\int_{0}^{T}\sigma^{2}\|\nabla u\|_{L^{2}}\|\rho\dot{u}\|_{L^{2}}^{3}
  +C\int_{0}^{T}\sigma^{2}\|P\|_{L^{2}}\|\rho\dot{u}\|_{L^{2}}^{3}
  +C\int_{0}^{T}\sigma^{2}\|H\|_{L^{2}}^{\frac{1}{2}}
  \|\nabla H\|_{L^{2}}^{\frac{3}{2}}\|\rho\dot{u}\|_{L^{2}}^{3}
    \non\\[2mm]
  \D&\quad
  +C\int_{0}^{T}\sigma^{2}\|\nabla u\|_{L^{2}}\|H\|_{L^{3}}^{3}\|\nabla^{2} H\|_{L^{2}}^{3}
  +C\int_{0}^{T}\sigma^{2}\|P\|_{L^{2}}\|H\|_{L^{3}}^{3}\|\nabla^{2} H\|_{L^{2}}^{3}\non\\[2mm]
  \D&\quad
  +C\int_{0}^{T}\sigma^{2}\|H\|_{L^{2}}^{\frac{1}{2}}
  \|\nabla H\|_{L^{2}}^{\frac{3}{2}}\|H\|_{L^{3}}^{3}\|\nabla^{2} H\|_{L^{2}}^{3}
  +C\int_{0}^{T}\sigma^{2}\|P\|_{L^{4}}^{4}\non\\[2mm]
    \D&=\sum_{i=1}^{7}J_{i}.
 \end{align}
 Thanks to (\ref{2dbu-E1.011}), (\ref{2dbu-E3.02}), (\ref{2dbu-E3.03}), (\ref{2dbu-E3.0063}) and (\ref{2dbu-E3.0064}), we have
 \begin{align}
  \D J_{1}&\leq C(\bar{\rho}) \bi(\sup_{0 \leq t\leq T}\|\nabla u\|_{L^{2}}^{2}\bi)^{\frac{1}{2}}
  \bi(\sup_{0 \leq t\leq T}\sigma^{2}\|\rho^{\frac{1}{2}}\dot{u}\|_{L^{2}}^{2}\bi)^{\frac{1}{2}}
  \int_{0}^{T}\sigma\|\rho\|_{L^{^{3}}}^{2}\|\nabla\dot{u}\|_{L^{2}}^{2}\non\\[2mm]
  &\D\leq C(\bar{\rho})(K_{6}^{\frac{1}{2}}+1)\left((\gamma-1)^{\frac{1}{6}}
  E_{0}^{\frac{1}{2}}\right)^{\frac{1}{4}}(\gamma-1)^{\frac{2}{3}}E_{0}^{\frac{2}{3}}
  (K_{7}+1)\non\\
  &\D\leq C(\bar{\rho})(K_{6}^{\frac{1}{2}}+1)(K_{7}+1)(\gamma-1)^{\frac{17}{24}}E_{0}^{\frac{19}{24}}\non\\
  &\D\leq C(\bar{\rho})(K_{6}^{\frac{1}{2}}+1)(K_{7}+1)(\gamma-1)^{\frac{4}{9}}
  \left((\gamma-1)^{\frac{1}{6}}E_{0}^{\frac{1}{2}}\right)^{\frac{3}{2}}.
 \end{align}
 Similarly, we deduce
 \begin{align}
     \D J_{2}&\leq C\int_{0}^{T}\sigma^{2}\|P\|_{L^{2}}\|\rho\dot{u}\|_{L^{2}}^{3}\non\\[2mm]
   &\leq C\bi(\sup_{0 \leq t\leq T}\sigma^{2}\|\rho^{\frac{1}{2}}\dot{u}\|_{L^{2}}^{2}\bi)^{\frac{1}{2}}
   \bi(\sup_{0 \leq t\leq T}\|P\|_{L^{2}}\Big)
   \int_{0}^{T}\sigma\|\rho\|_{L^{^{3}}}^{2}\|\nabla\dot{u}\|_{L^{2}}^{2}\non\\[2mm]
   &\leq C(\bar{\rho})(K_{7}+1)(\gamma-1)^{\frac{29}{24}}E_{0}^{\frac{31}{24}}\non\\
   &\leq C(\bar{\rho})(K_{7}+1)(\gamma-1)^{\frac{7}{9}}
   \left((\gamma-1)^{\frac{1}{6}}E_{0}^{\frac{1}{2}}\right)^{\frac{3}{2}}.
  \end{align}
 It follows from (\ref{2dbu-E1.011}), (\ref{2dbu-E3.02}), (\ref{2dbu-E3.08}), (\ref{2dbu-E3.9}) and (\ref{2dbu-E3.0063})-(\ref{2dbu-E3.0064}) that
 \begin{align}
   \D J_{3}&\leq C\int_{0}^{T}\sigma^{2}\|H\|_{L^{2}}^{\frac{1}{2}}
   \|\nabla H\|_{L^{2}}^{\frac{3}{2}}\|\rho\dot{u}\|_{L^{2}}^{3}\non\\[2mm]
   &\leq CK_{2}^{\frac{1}{2}}\bi(\sup_{0 \leq t\leq T}\sigma^{2}\|\rho^{\frac{1}{2}}\dot{u}\|_{L^{2}}^{2}\bi)^{\frac{1}{2}}
   \bi(\sup_{0 \leq t\leq T}\|H\|_{L^{2}}^{2}\Big)^{\frac{1}{4}}
   \int_{0}^{T}\sigma\|\rho\dot{u}\|_{L^{2}}^{2}\non\\[2mm]
   &\leq  CK_{2}^{\frac{1}{2}}\left((\gamma-1)^{\frac{1}{6}}E_{0}^{\frac{1}{2}}\right)^{\frac{1}{2}}
   \int_{0}^{T}\sigma\|\rho\|_{L^{^{3}}}^{2}\|\nabla\dot{u}\|_{L^{2}}^{2}\non\\
   &\leq CK_{2}^{\frac{1}{2}}\left((\gamma-1)^{\frac{1}{6}}E_{0}^{\frac{1}{2}}\right)^{\frac{1}{2}}(\gamma-1)^{\frac{2}{3}}E_{0}^{\frac{2}{3}}\non\\
   &\leq CK_{2}^{\frac{1}{2}}(K_{7}+1)(\gamma-1)^{\frac{3}{4}}E_{0}^{\frac{11}{12}}\non\\
   &\leq CK_{2}^{\frac{1}{2}}(K_{7}+1)(\gamma-1)^{\frac{4}{9}}
   \left((\gamma-1)^{\frac{1}{6}}E_{0}^{\frac{1}{2}}\right)^{\frac{3}{2}},
 \end{align}
Moreover, $J_{4}$ to $J_{6}$ can be estimated, in a similar way, as follows:
 \begin{align}
   \D J_{4}&\leq C\int_{0}^{T}\sigma^{2}\|\nabla u\|_{L^{2}}\|H\|_{L^{3}}^{3}\|\nabla^{2} H\|_{L^{2}}^{3}\non\\[2mm]
   &\leq C\bi(\sup_{0 \leq t\leq T}\sigma^{2}\|\nabla^{2} H\|_{L^{2}}^{2}\bi)^{\frac{1}{2}}
   \bi(\sup_{0 \leq t\leq T}\|H\|_{L^{2}}^{\frac{3}{2}}\|\nabla H\|_{L^{2}}^{\frac{3}{2}}\Big)\bi(\sup_{0 \leq t\leq T}\|\nabla u\|_{L^{2}}^{2}\Big)^{\frac{1}{2}}\int_{0}^{T}\sigma\|\nabla^{2} H\|_{L^{2}}^{2}\non\\[2mm]
   &\leq CK_{2}^{\frac{9}{4}}M_{2}^{\frac{3}{4}}(K_{6}^{\frac{1}{2}}+1)\left((\gamma-1)^{\frac{1}{6}}E_{0}^{\frac{1}{2}}\right)^{\frac{3}{2}},
  \end{align}
  \begin{align}
   \D J_{5}&\leq C\int_{0}^{T}\sigma^{2}\|P\|_{L^{2}}\|H\|_{L^{3}}^{3}\|\nabla^{2} H\|_{L^{2}}^{3}\non\\[2mm]
   &\leq C\bi(\sup_{0 \leq t\leq T}\sigma^{2}\|\nabla^{2} H\|_{L^{2}}^{2}\bi)^{\frac{1}{2}}
   \bi(\sup_{0 \leq t\leq T}\|H\|_{L^{2}}^{\frac{3}{2}}\|\nabla H\|_{L^{2}}^{\frac{3}{2}}\Big)\bi(\sup_{0 \leq t\leq T}\|P\|_{L^{2}}\Big)
   \int_{0}^{T}\sigma\|\nabla^{2} H\|_{L^{2}}^{2}\non\\[2mm]
   &\leq CK_{2}^{\frac{9}{4}}M_{2}^{\frac{3}{4}}(\gamma-1)^{\frac{1}{2}}E_{0}^{\frac{1}{2}}
   \left((\gamma-1)^{\frac{1}{6}}E_{0}^{\frac{1}{2}}\right)^{\frac{3}{2}}\non\\
    &\leq CK_{2}^{\frac{9}{4}}M_{2}^{\frac{3}{4}}(\gamma-1)^{\frac{1}{3}}
    \left((\gamma-1)^{\frac{1}{6}}E_{0}^{\frac{1}{2}}\right)^{\frac{3}{2}},
 \end{align}
and
 \begin{align}\label{2dbu-E3.094}
   \D J_{6}&\leq C\int_{0}^{T}\sigma^{2}\|H\|_{L^{2}}^{\frac{1}{2}}
  \|\nabla H\|_{L^{2}}^{\frac{3}{2}}\|H\|_{L^{3}}^{3}\|\nabla^{2} H\|_{L^{2}}^{3}\non\\[2mm]
   &\leq C\bi(\sup_{0 \leq t\leq T}\sigma^{2}\|\nabla^{2} H\|_{L^{2}}^{2}\bi)^{\frac{1}{2}}
   \bi(\sup_{0 \leq t\leq T}\|H\|_{L^{3}}^{3}\Big)\bi(\sup_{0 \leq t\leq T}\|H\|_{L^{2}}^{\frac{1}{2}}
  \|\nabla H\|_{L^{2}}^{\frac{3}{2}}\Big)
   \int_{0}^{T}\sigma\|\nabla^{2} H\|_{L^{2}}^{2}\non\\[2mm]
   &\leq CK_{2}^{3}M_{2}^{\frac{3}{4}}\left((\gamma-1)^{\frac{1}{6}}E_{0}^{\frac{1}{2}}\right)^{\frac{3}{2}}.
 \end{align}
In order to obtain desired estimate on $\mathscr{T}_{5}$, it suffices to estimate $J_{7}$. One can deduce from $(\ref{2dbu-E1.1})_{1}$ that
\begin{align}\label{2dbu-E3.095}
   \D P_{t}+u\cdot\nabla P+\gamma P\mdiv u=0.
 \end{align}
 In terms of the effective viscous flux $G$, we can rewrite (\ref{2dbu-E3.095}) as
 \begin{align}\label{2dbu-E3.096}
   \D P_{t}+u\cdot\nabla P+\frac{\gamma}{2\mu+\lambda} PG+\frac{\gamma}{2(2\mu+\lambda)}P|H|^{2}+\frac{\gamma}{2\mu+\lambda}P^{2}=0.
 \end{align}
 Multiplying (\ref{2dbu-E3.096}) by $3\sigma^{2}P^{2}$ and integrating the resulting equality over $\mathbb{R}^{3}\times [0,T]$, we obtain
 \begin{align}\label{2dbu-E3.097}
  \D&\quad C\int_{0}^{T}\sigma^{2}\|P\|_{L^{4}}^{4}\non\\
  \D&\leq C\sup_{0 \leq t\leq T}(\sigma^{2}\|P\|_{L^{3}}^{3})
  +C\int_{0}^{T}\sigma\sigma'\|P\|_{L^{3}}^{3}
  +C\int_{0}^{T}\sigma^{2}\intt P^{3}G\non\\
  \D&\quad+C\int_{0}^{T}\sigma^{2}\intt P^{3}|H|^{2}\non\\
  \D&\leq C(\bar{\rho})(\gamma-1)E_{0}+\epsilon\int_{0}^{T}\sigma^{2}\|P\|_{L^{4}}^{4}+C\int_{0}^{T}\sigma^{2}\|G\|_{L^{4}}^{4}\non\\
  \D&\quad+\int_{0}^{T}\sigma^{2}\|H\|_{L^{8}}^{8}.
 \end{align}
 To handle the terms on the right hand side of (\ref{2dbu-E3.097}), we have by (\ref{2dbu-E2.02}), (\ref{2dbu-E3.08})-(\ref{2dbu-E3.9})
 and (\ref{2dbu-E3.019}) that
 \begin{align}\label{2dbu-E3.098}
 \D&\quad \int_{0}^{T}\sigma^{2}\|H\|_{L^{8}}^{8}\non\\
  \D&\leq C\int_{0}^{T}\sigma^{2}\|H\|_{L^{\infty}}\|H\|_{L^{6}}^{4}\|H\|_{L^{9}}^{3}\non\\
  \D&\leq C\int_{0}^{T}\sigma^{2}\|\nabla H\|_{L^{2}}^{\frac{9}{2}}\|\nabla^{2}H\|_{L^{2}}^{\frac{1}{2}}\|H\|_{L^{9}}^{3}\non\\
  \D&\leq C\bi(\sup_{0 \leq t\leq T}\sigma^{2}\|\nabla^{2} H\|_{L^{2}}^{2}\bi)^{\frac{1}{4}}
  \bi(\sup_{0 \leq t\leq T}\sigma\|\nabla H\|_{L^{2}}^{2}\bi)^{\frac{3}{2}}
  \bi(\sup_{0 \leq t\leq T}\|\nabla H\|_{L^{2}}^{2}\bi)^{\frac{3}{4}}\int_{0}^{T}\|H\|_{L^{9}}^{3}\non\\
  \D&\leq CK_{2}^{\frac{15}{4}}M_{2}^{\frac{3}{4}}\left((\gamma-1)^{\frac{1}{6}}E_{0}^{\frac{1}{2}}\right)^{\frac{13}{8}+\frac{1}{9}}.
  \end{align}
  Furthermore, we have also
  \begin{align}\label{2dbu-E3.099}
  \D&\quad C\int_{0}^{T}\sigma^{2}\|G\|_{L^{4}}^{4} \non\\[2mm]
  \D&\leq C\int_{0}^{T}\sigma^{2}\|G\|_{L^{2}}\|G\|_{L^{6}}^{3}\non\\[2mm]
  \D&\leq C\int_{0}^{T}\sigma^{2}\left((2\mu+\lambda)\|\nabla u\|_{L^{2}}+\|P\|_{L^{2}}\right)(\|\rho\dot{u}\|_{L^{2}}^{3}+\|H\cdot\nabla H\|_{L^{2}}^{3})
 \non\\[2mm]
  \D&\quad+C\int_{0}^{T}\sigma^{2}\|H\|_{L^{2}}^{\frac{1}{2}}\|\nabla H\|_{L^{2}}^{\frac{3}{2}}(\|\rho\dot{u}\|_{L^{2}}^{3}+\|H\cdot\nabla H\|_{L^{2}}^{3})\non\\[2mm]
  \D&\leq C\int_{0}^{T}\sigma^{2}\|\nabla u\|_{L^{2}}\|\rho\dot{u}\|_{L^{2}}^{3}+C\int_{0}^{T}\sigma^{2}\|P\|_{L^{2}}\|\rho\dot{u}\|_{L^{2}}^{3}\non\\[2mm]
  \D&\quad+C\int_{0}^{T}\sigma^{2}\|\nabla u\|_{L^{2}}\|H\|_{L^{3}}^{3}\|\nabla^{2}H\|_{L^{2}}^{3}
  +C\int_{0}^{T}\sigma^{2}\|P\|_{L^{2}}\|H\|_{L^{3}}^{3}\|\nabla^{2}H\|_{L^{2}}^{3}\non\\[2mm]
  \D&\quad+C\int_{0}^{T}\sigma^{2}\|H\|_{L^{2}}^{\frac{1}{2}}\|\nabla H\|_{L^{2}}^{\frac{3}{2}}\|\rho\dot{u}\|_{L^{2}}^{3}
  +C\int_{0}^{T}\sigma^{2}\|H\|_{L^{2}}^{\frac{1}{2}}\|\nabla H\|_{L^{2}}^{\frac{3}{2}}\|H\|_{L^{3}}^{3}\|\nabla^{2}H\|_{L^{2}}^{3}\non\\[2mm]
  \D&\leq C\bi(\sup_{0 \leq t\leq T}\|\nabla u\|_{L^{2}}^{2}\bi)^{\frac{1}{2}}
  \bi(\sup_{0 \leq t\leq T}\sigma^{2}\|\rho\dot{u}\|_{L^{2}}^{2}\bi)^{\frac{1}{2}}
  \int_{0}^{T}\sigma\|\rho\dot{u}\|_{L^{2}}^{2}\non\\[2mm]
  \D&\quad +C\bi(\sup_{0 \leq t\leq T}\|P\|_{L^{2}}\bi)
  \bi(\sup_{0 \leq t\leq T}\sigma^{2}\|\rho\dot{u}\|_{L^{2}}^{2}\bi)^{\frac{1}{2}}
  \int_{0}^{T}\sigma\|\rho\dot{u}\|_{L^{2}}^{2}\non\\[2mm]
  \D&\quad+C\bi(\sup_{0 \leq t\leq T}\|\nabla u\|_{L^{2}}^{2}\bi)^{\frac{1}{2}}\bi(\sup_{0 \leq t\leq T}\|H\|_{L^{2}}^{\frac{3}{2}}\|\nabla H\|_{L^{2}}^{\frac{3}{2}}\bi)
  \bi(\sup_{0 \leq t\leq T}\sigma^{2}\|\nabla^{2}H\|_{L^{2}}^{2}\bi)^{\frac{1}{2}}
  \int_{0}^{T}\sigma\|\nabla^{2}H\|_{L^{2}}^{2}\non\\[2mm]
  \D&\quad+C\bi(\sup_{0 \leq t\leq T}\|P\|_{L^{2}}\bi)
  \bi(\sup_{0 \leq t\leq T}\|H\|_{L^{2}}^{\frac{3}{2}}\|\nabla H\|_{L^{2}}^{\frac{3}{2}}\bi)
  \bi(\sup_{0 \leq t\leq T}\sigma^{2}\|\nabla^{2}H\|_{L^{2}}^{2}\bi)^{\frac{1}{2}}
  \int_{0}^{T}\sigma\|\nabla^{2}H\|_{L^{2}}^{2}\non\\[2mm]
  \D&\quad+C\bi(\sup_{0 \leq t\leq T}\|H\|_{L^{2}}^{2}\bi)^{\frac{1}{4}}
  \bi(\sup_{0 \leq t\leq T}\|\nabla H\|_{L^{2}}^{2}\bi)^{\frac{3}{4}}
  \bi(\sup_{0 \leq t\leq T}\sigma^{2}\|\rho^{\frac{1}{2}}\dot{u}\|_{L^{2}}^{2}\bi)^{\frac{1}{2}}
  \int_{0}^{T}\sigma\|\rho\dot{u}\|_{L^{2}}^{2}\non\\[2mm]
  \D&\quad+C\bi(\sup_{0 \leq t\leq T}\|H\|_{L^{2}}^{2}\bi)^{\frac{1}{4}}
  \bi(\sup_{0 \leq t\leq T}\|\nabla H\|_{L^{2}}^{2}\bi)^{\frac{3}{4}}
  \bi(\sup_{0 \leq t\leq T}\|H\|_{L^{2}}^{\frac{3}{2}}\|\nabla H\|_{L^{2}}^{\frac{3}{2}}\bi)\non\\[2mm]
  \D&\quad\quad\cdot
  \bi(\sup_{0 \leq t\leq T}\sigma^{2}\|\nabla^{2}H\|_{L^{2}}^{2}\bi)^{\frac{1}{2}}
  \int_{0}^{T}\sigma\|\nabla^{2}H\|_{L^{2}}^{2}\non\\[2mm]
  \D&\leq C(K_{6}^{\frac{1}{2}}+1)(K_{7}+1)\left((\gamma-1)^{\frac{1}{6}}E_{0}^{\frac{1}{2}}\right)^{\frac{1}{4}}
  (\gamma-1)^{\frac{2}{3}}E_{0}^{\frac{2}{3}}\non\\[2mm]
  \D&\quad+C(K_{7}+1)\left((\gamma-1)^{\frac{1}{6}}E_{0}^{\frac{1}{2}}\right)^{\frac{1}{4}}(\gamma-1)^{\frac{7}{6}}E_{0}^{\frac{7}{6}}\non\\[2mm]
  \D&\quad+CK_{2}^{\frac{9}{4}}M_{2}^{\frac{3}{4}}(K_{6}^{\frac{1}{2}}+1)\left((\gamma-1)^{\frac{1}{6}}E_{0}^{\frac{1}{2}}\right)^{\frac{3}{2}}
  +CK_{2}^{\frac{9}{4}}M_{2}^{\frac{3}{4}}(\gamma-1)^{\frac{1}{2}}E_{0}^{\frac{1}{2}}
  \left((\gamma-1)^{\frac{1}{6}}E_{0}^{\frac{1}{2}}\right)^{\frac{3}{2}}\non\\[2mm]
  \D&\quad+CK_{2}^{\frac{5}{4}}M_{2}^{\frac{3}{4}}(K_{7}+1)
  \left((\gamma-1)^{\frac{1}{6}}E_{0}^{\frac{1}{2}}\right)^{\frac{1}{2}}
  (\gamma-1)^{\frac{2}{3}}E_{0}^{\frac{2}{3}}\non\\[2mm]
  \D&\quad+CK_{2}^{\frac{7}{2}}M_{2}^{\frac{3}{2}}\left((\gamma-1)^{\frac{1}{6}}E_{0}^{\frac{1}{2}}\right)^{\frac{3}{2}} \non\\[2mm]
  \D&\leq K_{8}^{\tilde{1}}\left((\gamma-1)^{\frac{1}{6}}E_{0}^{\frac{1}{2}}\right)^{\frac{3}{2}},
 \end{align}
provided $(\gamma-1)^{\frac{1}{6}}E_{0}^{\frac{1}{2}}\leq 1$, where
 \begin{align}
   \D K_{8}^{\tilde{1}}&=C(K_{6}^{\frac{1}{2}}+1)(K_{7}+1)(\gamma-1)^{\frac{4}{9}}
   +C(K_{7}+1)(\gamma-1)^{\frac{7}{9}}+CK_{2}^{\frac{9}{4}}M_{2}^{\frac{3}{4}}(K_{6}^{\frac{1}{2}}+1)\non\\
   \D&\quad +CK_{2}^{\frac{9}{4}}M_{2}^{\frac{3}{4}}(\gamma-1)^{\frac{1}{3}}
   +CK_{2}^{\frac{5}{4}}M_{2}^{\frac{3}{4}}(K_{7}+1)(\gamma-1)^{\frac{4}{9}}
    +CK_{2}^{\frac{7}{2}}M_{2}^{\frac{3}{2}}.
 \end{align}
 Substituting (\ref{2dbu-E3.098}) and (\ref{2dbu-E3.099}) into (\ref{2dbu-E3.097}), assuming $((\gamma-1)^{\frac{1}{6}}E_{0}^{\frac{1}{2}}\leq 1$, we have
 \begin{align}\label{2dbu-E3.0101}
  \D J_{7}&=C\int_{0}^{T}\sigma^{2}\|P\|_{L^{4}}^{4}\non\\
    \D&\leq C(\bar{\rho})(\gamma-1)E_{0}+C\int_{0}^{T}\sigma^{2}\|G\|_{L^{4}}^{4}+\int_{0}^{T}\sigma^{2}\|H\|_{L^{8}}^{8}\non\\[2mm]
    \D&\leq K_{8}^{\tilde{2}}\left((\gamma-1)^{\frac{1}{6}}E_{0}^{\frac{1}{2}}\right)^{\frac{3}{2}},
  \end{align}
 where
 \begin{align}
   \D K_{8}^{\tilde{2}}=C(\bar{\rho})(\gamma-1)^{\frac{2}{3}}+CK_{2}^{\frac{15}{4}}M_{2}^{\frac{3}{4}}+K_{8}^{\tilde{1}}.
 \end{align}
 Combining (\ref{2dbu-E3.088})-(\ref{2dbu-E3.094}) and (\ref{2dbu-E3.0101}), we consequently get that
 \begin{align}\label{2dbu-E3.0103}
   &\D\mathscr{T}_{5}=C\int_{0}^{T}\intt\sigma^{2}|\nabla u|^{4}\leq K_{8}\left((\gamma-1)^{\frac{1}{6}}E_{0}^{\frac{1}{2}}\right)^{\frac{3}{2}},
 \end{align}
 where
 \begin{align}
  \D K_{8}&=C(\bar{\rho})(K_{6}^{\frac{1}{2}}+1)(K_{7}+1)(\gamma-1)^{\frac{4}{9}}
  +C(\bar{\rho})(K_{7}+1)(\gamma-1)^{\frac{7}{9}}\non\\[2mm]
  \D&\quad +CK_{2}^{\frac{1}{2}}(K_{7}+1)(\gamma-1)^{\frac{4}{9}}
  +CK_{2}^{\frac{9}{4}}M_{2}^{\frac{3}{4}}(K_{6}^{\frac{1}{2}}+1)\non\\[2mm]
  \D&\quad+CK_{2}^{\frac{9}{4}}M_{2}^{\frac{3}{4}}(\gamma-1)^{\frac{1}{3}}+CK_{2}^{3}M_{2}^{\frac{3}{4}}+K_{8}^{\tilde{2}}.
 \end{align}
  It holds from (\ref{2dbu-E3.028}) that
 \begin{align}\label{2dbu-E3.0105}
  \D \mathscr{T}_{1}=C\int_{0}^{\sigma(T)}\intt|\nabla u|^{2}\leq CK_{3}(\gamma-1)^{\frac{1}{6}}E_{0}^{\frac{1}{2}}.
 \end{align}
 To estimate $\mathscr{T}_{2}$ , due to $\mathrm{H\ddot{o}lder}$ inequality, (\ref{2dbu-E3.04}), (\ref{2dbu-E1.011}) and (\ref{2dbu-E3.0103}), we deduce
 \begin{align}
   \D \mathscr{T}_{2}&= C\int_{0}^{T}\sigma\intt|\nabla u|^{3}\non\\
   \D&\leq C\left(\int_{0}^{T}\intt |\nabla u|^{2}\right)^{\frac{1}{2}}
   \left(\int_{0}^{T}\intt \sigma^{2}|\nabla u|^{4}\right)^{\frac{1}{2}}\non\\
   \D&\leq CE_{0}^{\frac{1}{2}} K_{8}^{\frac{1}{2}}\left((\gamma-1)^{\frac{1}{6}}E_{0}^{\frac{1}{2}}\right)^{\frac{3}{4}}\non\\
   \D&\leq C K_{8}^{\frac{1}{2}}(\gamma-1)^{\frac{1}{12\times24}}\left((\gamma-1)^{\frac{1}{6}}E_{0}^{\frac{1}{2}}\right)^{\frac{7}{12}}.
 \end{align}
 As for $\mathscr{T}_{4}$, by $\mathrm{H\ddot{o}lder}$ inequality, (\ref{2dbu-E3.0101}) and (\ref{2dbu-E3.0103}), we get
 \begin{align}
  \D\mathscr{T}_{4}&= C\gamma^{2}\int_{0}^{T}\intt\sigma^{2}|P\nabla u|^{2}\non\\
  &\leq
  C\left(\int_{0}^{T}\sigma^{2}\|P\|_{L^{4}}^{4} \right)^{\frac{1}{2}}
  \left(\int_{0}^{T}\sigma^{2}\|\nabla u\|_{L^{4}}^{4} \right)^{\frac{1}{2}}\non\\
  &\leq C\sqrt{K_{8}^{\tilde{2}}}K_{8}^{\frac{1}{2}}
  \left((\gamma-1)^{\frac{1}{6}}E_{0}^{\frac{1}{2}}\right)^{\frac{3}{2}}.
 \end{align}
 Now it remains to estimate $\mathscr{T}_{3}$. It follows from (\ref{2dbu-E3.04}), (\ref{2dbu-E3.0101}) and (\ref{2dbu-E3.0103}) that
 \begin{align}\label{2dbu-E3.0108}
   \D\mathscr{T}_{3}&=C\gamma\int_{0}^{T}\sigma\intt P|\nabla u|^{2}\non\\
   \D&\leq C\left(\int_{0}^{T}\sigma^{2}\|P\|_{L^{4}}^{4} \right)^{\frac{1}{4}}
   \left(\int_{0}^{T}\sigma^{2}\|\nabla u\|_{L^{4}}^{4} \right)^{\frac{1}{4}}
   \left(\int_{0}^{T}\sigma^{2}\|\nabla u\|_{L^{2}}^{2} \right)^{\frac{1}{2}}\non\\
    \D&\leq CE_{0}^{\frac{1}{2}}\sqrt[4]{K_{8}^{\tilde{2}}K_{8}}\left((\gamma-1)^{\frac{1}{6}}E_{0}^{\frac{1}{2}}\right)^{\frac{3}{4}}\non\\
    \D&\leq C\sqrt[4]{K_{8}^{\tilde{2}}K_{8}}
    (\gamma-1)^{\frac{1}{12\times24}}\left((\gamma-1)^{\frac{1}{6}}E_{0}^{\frac{1}{2}}\right)^{\frac{7}{12}}.
 \end{align}
 Finally, we deduce from (\ref{2dbu-E3.0103}), (\ref{2dbu-E3.0105})-(\ref{2dbu-E3.0108}) that
 \begin{align}\label{2dbu-E3.0109}
  \D A_{1}(T)+A_{2}(T)&\leq C(K_{4}+K_{5})\left((\gamma-1)^{\frac{1}{6}}E_{0}^{\frac{1}{2}}\right)^{\frac{11}{18}}+
  CK_{3}(\gamma-1)^{\frac{1}{6}}E_{0}^{\frac{1}{2}}\non\\
  \D&\quad+C K_{8}^{\frac{1}{2}}(\gamma-1)^{\frac{1}{12\times24}}\left((\gamma-1)^{\frac{1}{6}}E_{0}^{\frac{1}{2}}\right)^{\frac{7}{12}}\non\\
  \D&\quad+C\sqrt[4]{K_{8}^{\tilde{2}}K_{8}}
    (\gamma-1)^{\frac{1}{12\times24}}\left((\gamma-1)^{\frac{1}{6}}E_{0}^{\frac{1}{2}}\right)^{\frac{7}{12}}\non\\
    \D&\quad+ C\sqrt{K_{8}^{\tilde{2}}}K_{8}^{\frac{1}{2}}
  \left((\gamma-1)^{\frac{1}{6}}E_{0}^{\frac{1}{2}}\right)^{\frac{3}{2}}
  +K_{8}\left((\gamma-1)^{\frac{1}{6}}E_{0}^{\frac{1}{2}}\right)^{\frac{3}{2}}\non\\
  \D&\leq CK_{9}\left((\gamma-1)^{\frac{1}{6}}E_{0}^{\frac{1}{2}}\right)^{\frac{11}{18}}\non\\
  \D&\leq  \left((\gamma-1)^{\frac{1}{6}}E_{0}^{\frac{1}{2}}\right)^{\frac{1}{2}},
 \end{align}
 provided that
 \begin{align}
   \D (\gamma-1)^{\frac{1}{6}}E_{0}^{\frac{1}{2}}\leq \min\left\{(CK_{9})^{-9},1\right\}.
 \end{align}
 where $K_{9}$ is given by
 \begin{align}
   \D K_{9}&=(K_{4}+K_{5})+K_{3}+ K_{8}^{\frac{1}{2}}(\gamma-1)^{\frac{1}{12\times24}}+\sqrt[4]{K_{8}^{\tilde{2}}K_{8}}
    (\gamma-1)^{\frac{1}{12\times24}}\non\\
    \D&\quad+\sqrt{K_{8}^{\tilde{2}}}K_{8}^{\frac{1}{2}}+K_{8}.
 \end{align}
 And to get (\ref{2dbu-E3.0109}), we have used the facts that $(\gamma-1)^{\frac{1}{6}}E_{0}^{\frac{1}{2}}\leq 1$. Then we finish the proof of
 Lemma \ref{2dbu-L3.9}.
 \end{proof}
 Now we are ready to prove the upper bound of the density.
 \begin{lem}\label{2dbu-L3.10}
 Under the same assumption as in Proposition \ref{2dbu-P3.1}, it holds that
 \begin{align}
   \D\sup_{0\leq t\leq T}\|\rho\|_{L^{\infty}}\leq \frac{7}{4}\bar{\rho},
 \end{align}
 provided that
 \begin{align}
   \D (\gamma-1)^{\frac{1}{6}}E_{0}^{\frac{1}{2}}\leq \min\left\{\varepsilon_{5},\left(\frac{\bar{\rho}}{2K_{9}}\right)^{16},\frac{\bar{\rho}}{4C(\bar{\rho})(1+K_{2}^{2})}\right\},
 \end{align}
 where
 \begin{align}
 \D K_{10}=K_{7}^{\frac{5}{16}}+K_{2}^{\frac{3}{2}}M_{2}^{\frac{1}{2}}+K_{2}^{\frac{1}{2}}M_{2}^{\frac{1}{2}}
  +K_{2}^{\frac{3}{8}}M_{2}^{\frac{3}{8}}K_{7}^{\frac{1}{4}}.
 \end{align}
 \end{lem}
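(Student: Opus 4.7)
I will apply the Zlotnik inequality along particle trajectories, following the Hoff--Huang--Li--Xin--Zhang strategy adapted to MHD. Let $X(\cdot;x)$ denote the flow of $u$ with $X(0;x)=x$. Combining $(\ref{2dbu-E1.1})_{1}$ with the identity $(2\mu+\lambda)\mdiv u=G+P(\rho)+\tfrac{1}{2}|H|^{2}$ gives, for $y(t):=\rho(X(t;x),t)$,
$$y'(t)=-\frac{y^{\gamma+1}}{2\mu+\lambda}-\frac{y}{2\mu+\lambda}\Big(G+\tfrac{1}{2}|H|^{2}\Big)(X(t;x),t).$$
Setting $g(s):=-s^{\gamma+1}/(2\mu+\lambda)$ and using the a priori bound $y(t)\leq 2\bar\rho$ from (\ref{2dbu-E3.02}), we may check that $y'\leq g(y)+b'(t)$ with $b'(t):=\tfrac{2\bar\rho}{2\mu+\lambda}\big(\|G\|_{L^{\infty}}+\tfrac{1}{2}\||H|^{2}\|_{L^{\infty}}\big)$. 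Since $g(+\infty)=-\infty$, the Zlotnik inequality reduces the lemma to producing a decomposition $b(t_{2})-b(t_{1})\leq N_{0}+N_{1}(t_{2}-t_{1})$ in which $N_{0}$ and $N_{1}$ are so small (relative to $\bar\rho$ and to $|g(\tfrac{7}{4}\bar\rho)|$, respectively) that the resulting bound $y(t)\leq\max\{\bar\rho,\bar\xi\}+N_{0}$ collapses to $\tfrac{7}{4}\bar\rho$.

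\textbf{Bounding $b$.} I split $[0,T]$ at $\sigma(T)$ and treat $G$ and $|H|^{2}$ separately. For $G$, the Gagliardo--Nirenberg inequality gives $\|G\|_{L^{\infty}}\leq C\|G\|_{L^{2}}^{1/4}\|\nabla G\|_{L^{6}}^{3/4}$, and Lemma \ref{2dbu-L2.2} supplies $\|G\|_{L^{2}}\leq C(\|\nabla u\|_{L^{2}}+\|P\|_{L^{2}}+\|H\|_{L^{4}}^{2})$ together with $\|\nabla G\|_{L^{6}}\leq C\bar\rho\|\nabla\dot u\|_{L^{2}}+C\|H\cdot\nabla H\|_{L^{6}}$. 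For $|H|^{2}$ I use $\||H|^{2}\|_{L^{\infty}}\leq C\|H\|_{L^{6}}\|H\|_{L^{\infty}}\leq C\|\nabla H\|_{L^{2}}^{3/2}\|\nabla^{2}H\|_{L^{2}}^{1/2}$. On the short interval $[0,\sigma(T)]$ the time-weighted estimates (\ref{2dbu-E3.0061})--(\ref{2dbu-E3.0062}) from Lemma \ref{2dbu-L3.7} and the elliptic identity (\ref{2dbu-E3.57}) uniformly control the singular factors $t\|\nabla\dot u\|_{L^{2}}^{2}$ and $t\|\nabla^{2}H\|_{L^{2}}^{2}$; a Hölder redistribution of the integrable weight $t^{-a}$ with $a<1$ then absorbs $\int_{0}^{\sigma(T)}(\|G\|_{L^{\infty}}+\||H|^{2}\|_{L^{\infty}})\,dt$ into a term of the form $K_{10}\big((\gamma-1)^{1/6}E_{0}^{1/2}\big)^{\theta_{1}}$, which I place into $N_{0}$. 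On $[\sigma(T),T]$ the weight $\sigma$ drops out, so a further Hölder in time against $A_{1}(T)+A_{2}(T)$ and the bounds of Lemmas \ref{2dbu-L3.1}, \ref{2dbu-L3.3}, \ref{2dbu-L3.6}, \ref{2dbu-L3.9} yields $b(t_{2})-b(t_{1})\leq N_{0}+CK_{9}\big((\gamma-1)^{1/6}E_{0}^{1/2}\big)^{\theta_{2}}(t_{2}-t_{1})$. The hypothesis $(\gamma-1)^{1/6}E_{0}^{1/2}\leq(\bar\rho/(2K_{9}))^{16}$ is then strong enough to force both $N_{0}\leq\bar\rho/4$ and $CK_{9}\big((\gamma-1)^{1/6}E_{0}^{1/2}\big)^{\theta_{2}}\leq |g(\tfrac{7}{4}\bar\rho)|$.

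\textbf{Main obstacle.} The genuinely new difficulty relative to the Navier--Stokes case of \cite{X.F. Hou,X.D. Huang} is the magnetic correction $\|H\cdot\nabla H\|_{L^{6}}$ sitting inside $\|\nabla G\|_{L^{6}}$, together with the direct contribution of $\||H|^{2}\|_{L^{\infty}}$ to $b'(t)$: neither is controlled by density-only estimates, and both must be absorbed by interpolating the time-weighted $L^{\infty}_{t}L^{2}_{x}$-bounds on $\nabla H$ and $\nabla^{2}H$ (coming from $K_{2}$, $M_{2}$, $K_{7}$) against the $L^{2}_{t}$-integrability supplied by $A_{1}(T)+A_{2}(T)$. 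Careful bookkeeping of these interpolations is precisely what generates the composite constants $K_{2}^{3/2}M_{2}^{1/2}$, $K_{2}^{1/2}M_{2}^{1/2}$ and $K_{2}^{3/8}M_{2}^{3/8}K_{7}^{1/4}$ making up $K_{10}$, and what forces the sixteenth-power smallness threshold on $(\gamma-1)^{1/6}E_{0}^{1/2}/\bar\rho$. Once the Zlotnik decomposition is in hand, the inequality delivers $\rho(X(t;x),t)\leq\tfrac{7}{4}\bar\rho$ along every trajectory, and since $x\mapsto X(t;x)$ is a $C^{1}$-diffeomorphism of $\mathbb{R}^{3}$, this yields the claimed uniform bound $\sup_{0\leq t\leq T}\|\rho\|_{L^{\infty}}\leq\tfrac{7}{4}\bar\rho$.
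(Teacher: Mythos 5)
Your proposal follows essentially the same route as the paper's proof: rewrite the mass equation along particle paths as $D_{t}\rho=g(\rho)+b'(t)$ with $g(\rho)=-\rho P/(2\mu+\lambda)$ and $b'$ carrying $\rho G+\tfrac12\rho|H|^{2}$, estimate $\|G\|_{L^{\infty}}$ and $\|H\|_{L^{\infty}}^{2}$ by Gagliardo--Nirenberg combined with the elliptic bounds of Lemma \ref{2dbu-L2.2}, split time at $\sigma(T)$, use the time-weighted bounds of Lemmas \ref{2dbu-L3.3}, \ref{2dbu-L3.7}, \ref{2dbu-L3.9} on $[0,\sigma(T)]$ and the smallness of $A_{1}(T)+A_{2}(T)$ on $[\sigma(T),T]$, and conclude with Zlotnik's lemma; this is exactly the structure of (\ref{2dbu-E3.01117}) and (\ref{2dbu-E3.122}). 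Two cautions on details: your interpolation $\||H|^{2}\|_{L^{\infty}}\leq C\|\nabla H\|_{L^{2}}^{3/2}\|\nabla^{2}H\|_{L^{2}}^{1/2}$ is dimensionally inconsistent (and the intermediate step $\||H|^{2}\|_{L^{\infty}}\leq C\|H\|_{L^{6}}\|H\|_{L^{\infty}}$ is not a valid inequality); the correct bound, which the paper uses and which the same lemmas control, is $\|H\|_{L^{\infty}}^{2}\leq C\|\nabla H\|_{L^{2}}\|\nabla^{2}H\|_{L^{2}}$. Also, on $[\sigma(T),T]$ the paper does not make the linear rate small: it keeps $N_{1}=\frac{1}{2\mu+\lambda}$ fixed (arising from a Young-type splitting of $\int\|G\|_{L^{\infty}}$ into a constant rate plus $\int\|G\|_{L^{\infty}}^{4}$), takes $\bar{\xi}=1$ and uses $g(\xi)\leq-\frac{1}{2\mu+\lambda}$ for $\xi\geq1$, so only $N_{0}$ needs smallness; your variant of forcing the rate below $|g(\tfrac{7}{4}\bar{\rho})|$ can be arranged by an $\varepsilon$-dependent Young weight, but it imposes an extra constraint the paper avoids.
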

 \begin{proof}
  Let $D_{t}\triangleq \partial_{t}+u\cdot\nabla $ denote the material derivative operator. Then, in terms of the effective viscous flux $G$, we can rewrite (1.1) as
 $$
 \D D_{t}\rho=g(\rho)+b'(\rho),
 $$
 where
 \begin{align}
   \D g(\rho)\triangleq \frac{\rho P}{2\mu+\lambda},\ \ b(t)=-\frac{1}{2\mu+\lambda}\int_{0}^{t}\left(\rho G+\frac{1}{2}\rho|H|^{2}\right).
 \end{align}
 Moreover, it follows from Lemmas \ref{2dbu-L2.1}-\ref{2dbu-L2.2}, (\ref{2dbu-E2.04}) and (\ref{2dbu-E2.05}) that
 \begin{align}\label{2dbu-E3.115}
  \D \|G\|_{L^{\infty}}&\leq C\|G\|_{L^{6}}^{\frac{1}{2}}\|\nabla G\|_{L^{6}}^{\frac{1}{2}}\non\\[2mm]
  \D&\leq C\left(\|\rho\dot{u}\|_{L^{2}}^{\frac{1}{2}}+\|\nabla H\|_{L^{2}}^{\frac{3}{4}}\|\nabla^{2} H\|_{L^{2}}^{\frac{1}{4}}\right)
  \left(\|\nabla\dot{u}\|_{L^{2}}^{\frac{1}{2}}+\|\nabla H\|_{L^{2}}^{\frac{1}{4}}\|\nabla^{2} H\|_{L^{2}}^{\frac{3}{4}}\right)\non\\[2mm]
  \D&\leq C\|\rho\dot{u}\|_{L^{2}}^{\frac{1}{2}}\|\nabla\dot{u}\|_{L^{2}}^{\frac{1}{2}}
  +C\|\rho\dot{u}\|_{L^{2}}^{\frac{1}{2}}\|\nabla H\|_{L^{2}}^{\frac{1}{4}}\|\nabla^{2} H\|_{L^{2}}^{\frac{3}{4}}\non\\[2mm]
  \D&\quad+C\|\nabla\dot{u}\|_{L^{2}}^{\frac{1}{2}}\|\nabla H\|_{L^{2}}^{\frac{3}{4}}\|\nabla^{2} H\|_{L^{2}}^{\frac{1}{4}}
  +C\|\nabla H\|_{L^{2}}\|\nabla^{2} H\|_{L^{2}}.
 \end{align}
 For $t\in [0.\sigma(T)]$, one can deduce that for all $0\leq t_{1}\leq t_{2}\leq \sigma(T)$,
 \begin{align}\label{2dbu-E3.01117}
  \D|b(t_{2})-b(t_{1})|&\leq C\int_{0}^{\sigma(T)}\left(\|G\|_{L^{\infty}}+\|H\|_{L^{\infty}}^{2}\right) \non\\[2mm]
  \D&\leq C\int_{0}^{\sigma(T)}\|\rho\dot{u}\|_{L^{2}}^{\frac{1}{2}}\|\nabla\dot{u}\|_{L^{2}}^{\frac{1}{2}}
    +C\int_{0}^{\sigma(T)}\|\nabla H\|_{L^{2}}\|\nabla^{2} H\|_{L^{2}}\non\\[2mm]
    \D&\quad+\int_{0}^{\sigma(T)}\|\rho\dot{u}\|_{L^{2}}^{\frac{1}{2}}\|\nabla H\|_{L^{2}}^{\frac{1}{4}}\|\nabla^{2} H\|_{L^{2}}^{\frac{3}{4}}
    +\int_{0}^{\sigma(T)}\|\nabla\dot{u}\|_{L^{2}}^{\frac{1}{2}}\|\nabla H\|_{L^{2}}^{\frac{3}{4}}\|\nabla^{2} H\|_{L^{2}}^{\frac{1}{4}}\non\\[2mm]
  \D&\leq C\left(\int_{0}^{\sigma(T)}\|\rho\dot{u}\|_{L^{2}}^{\frac{2}{3}}t^{-\frac{1}{3}}\right)^{\frac{3}{4}}
   \left(\int_{0}^{\sigma(T)}t\|\nabla\dot{u}\|_{L^{2}}^{2}\right)^{\frac{1}{4}}
   \non\\[2mm]
   \D&\quad+C\left(\int_{0}^{\sigma(T)}\|\nabla H\|_{L^{2}}^{2}\right)^{\frac{1}{2}}
   \left(\int_{0}^{\sigma(T)}\|\nabla^{2} H\|_{L^{2}}^{2}\right)^{\frac{1}{2}}\non\\[2mm]
   \D&\quad+\left(\sup_{0\leq t\leq\sigma(T)}t^{2}\|\rho\dot{u}\|_{L^{2}}^{2}\right)^{\frac{1}{4}}
   \left(\sup_{0\leq t\leq\sigma(T)}\|\nabla H\|_{L^{2}}^{2}\right)^{\frac{1}{8}}
   \int_{0}^{\sigma(T)}t^{-\frac{1}{2}}\|\nabla^{2} H\|_{L^{2}}^{\frac{3}{4}}\non\\[2mm]
   \D&\quad+C\left(\sup_{0\leq t\leq\sigma(T)}\|\nabla H\|_{L^{2}}^{2}\right)^{\frac{3}{8}}
   \left(\sup_{0\leq t\leq\sigma(T)}t^{2}\|\nabla^{2} H\|_{L^{2}}^{2}\right)^{\frac{1}{8}}
   \int_{0}^{\sigma(T)}\left(t^{\frac{1}{4}}\|\nabla\dot{u}\|_{L^{2}}^{\frac{1}{2}}\right)t^{-\frac{1}{2}}\non\\[2mm]
   \D&\leq CK_{7}^{\frac{1}{4}}\left(\int_{0}^{\sigma(T)}\bi(\|\rho\dot{u}\|_{L^{2}}^{2}t\bi)^{\frac{1}{3}}t^{-\frac{2}{3}}\right)^{\frac{3}{4}}
   +CK_{2}^{\frac{3}{2}}M_{2}^{\frac{1}{2}}\left((\gamma-1)^{\frac{1}{6}}E_{0}^{\frac{1}{2}}\right)^{\frac{1}{2}}\non\\[2mm]
   \D&\quad+CK_{2}^{\frac{1}{8}}M_{2}^{\frac{1}{8}}\left((\gamma-1)^{\frac{1}{6}}E_{0}^{\frac{1}{2}}\right)^{\frac{1}{8}}
    \left(\int_{0}^{\sigma(T)}\|\nabla^{2} H\|_{L^{2}}^{2}\right)^{\frac{3}{8}}
   \left(\int_{0}^{\sigma(T)}t^{-\frac{4}{5}}\right)^{\frac{5}{8}}\non\\[2mm]
   \D&\quad+CK_{2}^{\frac{3}{8}}M_{2}^{\frac{3}{8}}\left((\gamma-1)^{\frac{1}{6}}E_{0}^{\frac{1}{2}}\right)^{\frac{1}{16}}
   \left(\int_{0}^{\sigma(T)}t\|\nabla\dot{u}\|_{L^{2}}^{2}\right)^{\frac{1}{4}}
   \left(\int_{0}^{\sigma(T)}t^{-\frac{2}{3}}\right)^{\frac{3}{4}}\non\\[2mm]
   \D&\leq CK_{7}^{\frac{1}{4}}\bi(\sup_{0 \leq t\leq\sigma(T)}\bi(\|\rho\dot{u}\|_{L^{2}}^{2}t\bi)^{\frac{1}{16}}\bi)
   \left(\int_{0}^{\sigma(T)}\bi(\|\rho\dot{u}\|_{L^{2}}^{2}t\bi)^{\frac{1}{4}}t^{-\frac{2}{3}}\right)^{\frac{3}{4}}\non\\[2mm]
   \D&\quad+CK_{2}^{\frac{3}{2}}M_{2}^{\frac{1}{2}}\left((\gamma-1)^{\frac{1}{6}}E_{0}^{\frac{1}{2}}\right)^{\frac{1}{2}}
   +CK_{2}^{\frac{1}{2}}M_{2}^{\frac{1}{2}}\left((\gamma-1)^{\frac{1}{6}}E_{0}^{\frac{1}{2}}\right)^{\frac{1}{8}}\non\\[2mm]
   \D&\quad+CK_{2}^{\frac{3}{8}}M_{2}^{\frac{3}{8}}
   K_{7}^{\frac{1}{4}}\left((\gamma-1)^{\frac{1}{6}}E_{0}^{\frac{1}{2}}\right)^{\frac{1}{16}}\non\\[2mm]
   \D&\leq CK_{7}^{\frac{5}{16}}\left(\int_{0}^{\sigma(T)}t\|\rho\dot{u}\|_{L^{2}}^{2}\right)^{\frac{3}{16}}
   \left(\int_{0}^{\sigma(T)}t^{-\frac{8}{9}}\right)^{\frac{9}{16}}
   +CK_{2}^{\frac{3}{2}}M_{2}^{\frac{1}{2}}\left((\gamma-1)^{\frac{1}{6}}E_{0}^{\frac{1}{2}}\right)^{\frac{1}{2}}\non\\[2mm]
   \D&\quad+CK_{2}^{\frac{1}{2}}M_{2}^{\frac{1}{2}}\left((\gamma-1)^{\frac{1}{6}}E_{0}^{\frac{1}{2}}\right)^{\frac{1}{8}}
   +CK_{2}^{\frac{3}{8}}M_{2}^{\frac{3}{8}}K_{7}^{\frac{1}{4}}
   \left((\gamma-1)^{\frac{1}{6}}E_{0}^{\frac{1}{2}}\right)^{\frac{1}{16}}\non\\[2mm]
   \D&\leq CK_{7}^{\frac{5}{16}}A_{1}(\sigma(T))^{\frac{3}{16}}
  +CK_{2}^{\frac{3}{2}}M_{2}^{\frac{1}{2}}\left((\gamma-1)^{\frac{1}{6}}E_{0}^{\frac{1}{2}}\right)^{\frac{1}{2}}
  +CK_{2}^{\frac{1}{2}}M_{2}^{\frac{1}{2}}\left((\gamma-1)^{\frac{1}{6}}E_{0}^{\frac{1}{2}}\right)^{\frac{1}{8}}\non\\[2mm]
 \D&\quad+CK_{2}^{\frac{3}{8}}M_{2}^{\frac{3}{8}}K_{7}^{\frac{1}{4}}
 \left((\gamma-1)^{\frac{1}{6}}E_{0}^{\frac{1}{2}}\right)^{\frac{1}{16}}\non\\[2mm]
  \D&\leq C\Big(K_{7}^{\frac{5}{16}}+K_{2}^{\frac{3}{2}}M_{2}^{\frac{1}{2}}+K_{2}^{\frac{1}{2}}M_{2}^{\frac{1}{2}}
  +K_{2}^{\frac{3}{8}}M_{2}^{\frac{3}{8}}K_{7}^{\frac{1}{4}}\Big)
  \left((\gamma-1)^{\frac{1}{6}}E_{0}^{\frac{1}{2}}\right)^{\frac{1}{16}}\non\\[2mm]
  \D&\triangleq K_{10}\left((\gamma-1)^{\frac{1}{6}}E_{0}^{\frac{1}{2}}\right)^{\frac{1}{16}},
 \end{align}
 where (\ref{2dbu-E3.9}), (\ref{2dbu-E3.0062}), (\ref{2dbu-E3.85}) and (\ref{2dbu-E3.115}) have been used.
 Therefore, for $t\in [0,\sigma(T)]$, we can choose $N_{0}$ and $N_{1}$ in Lemma 2.3 as follows
 \begin{align}
   \D N_{1}=0,\ \ N_{0}=K_{10}\left((\gamma-1)^{\frac{1}{6}}E_{0}^{\frac{1}{2}}\right)^{\frac{1}{16}},
 \end{align}
 and $\bar{\zeta}=0$. Then
 \begin{align}
   \D g(\zeta)=-\frac{\zeta P}{2\mu+\lambda}\leq -N_{1}=0\  \mathrm{for\  all}\  \zeta\geq \bar{\zeta}=0.
 \end{align}
 We thus have
 \begin{align}
  \D \sup_{0\leq t\leq \sigma(T)}\|\rho\|_{L^{\infty}}\leq\max\{\bar{\rho},0\}+N_{0}\leq \bar{\rho}+K_{10}\left((\gamma-1)^{\frac{1}{6}}E_{0}^{\frac{1}{2}}\right)^{\frac{1}{16}}\leq \frac{3\bar{\rho}}{2},
 \end{align}
 provided
 \begin{align}
  \D (\gamma-1)^{\frac{1}{6}}E_{0}^{\frac{1}{2}}\leq \min\left\{\left(\frac{\bar{\rho}}{2K_{9}}\right)^{16},\varepsilon_{5}\right\}.
 \end{align}
 Furthermore, due to Lemmas \ref{2dbu-L2.1}-\ref{2dbu-L2.2}, for $t\in [\sigma(T), T]$, we can derive
 \begin{align}\label{2dbu-E3.122}
  \D|b(t_{2})-b(t_{1})|&\leq C\int_{t_{1}}^{t_{2}}\left(\|G\|_{L^{\infty}}+\|H\|_{L^{\infty}}^{2}\right) \non\\[2mm]
  \D&\leq \frac{C(\bar{\rho})}{2\mu+\lambda}(t_{2}-t_{1})+C(\bar{\rho})\int_{\sigma(T)}^{T}\|G\|_{L^{\infty}}^{4}+C\int_{\sigma(T)}^{T}\|\nabla H\|_{L^{2}}\|\nabla^{2} H\|_{L^{2}}\non\\[2mm]
   \D&\leq \frac{C(\bar{\rho})}{2\mu+\lambda}(t_{2}-t_{1})+
   +C\left(\int_{\sigma(T)}^{T}\|\nabla H\|_{L^{2}}^{2}\right)^{\frac{1}{2}}
   \left(\int_{\sigma(T)}^{T}\|\nabla^{2} H\|_{L^{2}}^{2}\right)^{\frac{1}{2}}\non\\[2mm]
   \D&\quad+C(\bar{\rho})\int_{\sigma(T)}^{T}\|\rho\dot{u}\|_{L^{2}}^{2}\|\nabla\dot{u}\|_{L^{2}}^{2}
   +C\int_{\sigma(T)}^{T}\|\rho\dot{u}\|_{L^{2}}^{2}\|\nabla H\|_{L^{2}}\|\nabla^{2} H\|_{L^{2}}^{3}\non\\[2mm]
   \D&\quad+C\int_{\sigma(T)}^{T}\|\nabla\dot{u}\|_{L^{2}}^{2}\|\nabla H\|_{L^{2}}^{3}\|\nabla^{2} H\|_{L^{2}}
   +C\int_{\sigma(T)}^{T}\|\nabla H\|_{L^{2}}^{2}\|\nabla^{2} H\|_{L^{2}}^{2}
   \non\\[2mm]
   \D&\leq \frac{C(\bar{\rho})}{2\mu+\lambda}(t_{2}-t_{1})+C(\bar{\rho})K_{2}^{2}(\gamma-1)^{\frac{1}{6}}E_{0}^{\frac{1}{2}}
   +C(\bar{\rho})A_{2}(T)\int_{\sigma(T)}^{T}\|\nabla\dot{u}\|_{L^{2}}^{2}
   \non\\[2mm]
   \D&\quad+CA_{2}(T)^{\frac{3}{2}}A_{1}(T)^{\frac{3}{2}}
   +A_{1}(T)^{\frac{3}{2}}A_{2}(T)^{\frac{1}{2}}\int_{\sigma(T)}^{T}\|\nabla\dot{u}\|_{L^{2}}^{2}
   +CA_{1}(T)^{2}\non\\[2mm]
      \D&\leq \frac{C(\bar{\rho})}{2\mu+\lambda}(t_{2}-t_{1})
      +C(\bar{\rho})(\gamma-1)^{\frac{1}{6}}E_{0}^{\frac{1}{2}}+C(\bar{\rho})K_{2}^{2}A_{2}(T)^{2}\non\\[2mm]
      \D&\quad+CA_{2}(T)^{\frac{3}{2}}A_{1}(T)^{\frac{3}{2}}+CA_{1}(T)^{2}\non\\[2mm]
   \D&\leq \frac{C(\bar{\rho})}{2\mu+\lambda}(t_{2}-t_{1})+C(\bar{\rho})(1+K_{2}^{2})(\gamma-1)^{\frac{1}{6}}E_{0}^{\frac{1}{2}}.
  \end{align}
 Consequently, for $t\in [\sigma(T), T]$, we can choose $N_{0}$ and $N_{1}$ in Lemma 2.3 as follows
 \begin{align}
   \D N_{1}=\frac{1}{2\mu+\lambda},\ \ N_{0}=C(\bar{\rho})(1+K_{2}^{2})(\gamma-1)^{\frac{1}{6}}E_{0}^{\frac{1}{2}}.
 \end{align}
 Noticing that
 \begin{align}
   \D g(\zeta)=-\frac{\zeta P(\zeta)}{2\mu+\lambda}\leq-N_{1}=-\frac{1}{2\mu+\lambda}\ \mathrm{for}\ \mathrm{all}\ \zeta\geq 1,
 \end{align}
 one can set $\bar{\zeta}=1$. Thus
   \begin{align}
  \D \sup_{\sigma(T)\leq t\leq T}\|\rho\|_{L^{\infty}}\leq\max\{\frac{3}{2}\bar{\rho},1\}+N_{0}\leq \frac{3}{2}\bar{\rho}+C(\bar{\rho})(1+K_{2}^{2})(\gamma-1)^{\frac{1}{6}}E_{0}^{\frac{1}{2}}\leq \frac{7\bar{\rho}}{4},
 \end{align}
provided
\begin{align}
  \D (\gamma-1)^{\frac{1}{6}}E_{0}^{\frac{1}{2}}\leq  \min\left\{\varepsilon_{5},\frac{\bar{\rho}}{4C(\bar{\rho})(1+K_{2}^{2})}\right\}.
\end{align}
 \end{proof}
\section{Proof of Theorem \ref{2dbu-T1.1}}
\qquad In this section, we devote to prove the main result of this paper. First, from now on we always assume that the conditions in Theorem \ref{2dbu-T1.1} hold. Moreover, we denote the generic constant by $C$ which may depends on $T$, $\mu$, $\lambda$, $\nu$, $\gamma$, $\bar{\rho}$,  $\tilde{\rho}$, $M_{1}$,$M_{2}$, $g$ and some other initial data. Here $g\in L^{2}$ is the function in the compatibility condition (\ref{2dbu-E1.8}).
The following higher-order a priori estimates of the smooth solutions which are needed to guarantee the classical solutions  $(\rho, u, H)$ to be global ones have been proved in \cite{H.L. L}, so we omit their proof here.
\begin{lem}
The following estimates hold:
\begin{align}
  \D \sup_{0\leq t\leq T}&(\|\nabla H\|_{L^{2}}^{2}+\|\nabla u\|_{L^{2}}^{2})\non\\
  \D&+\int_{0}^{T}\Big(\|\rho^{\frac{1}{2}}\dot{u}\|_{L^{2}}^{2}+\|H_{t}\|_{L^{2}}^{2}+\|\nabla^{2}H\|_{L^{2}}^{2}\Big)\leq C(T),\\
  \D\sup_{0\leq t\leq T}&(\|\rho^{\frac{1}{2}}\dot{u}\|_{L^{2}}^{2}+\|\nabla^{2}H\|_{L^{2}}^{2}+\|H_{t}\|_{L^{2}}^{2})\non\\
  \D&+\int_{0}^{T}\Big(\|\nabla\dot{u}\|_{L^{2}}^{2}+\|\nabla H_{t}\|_{L^{2}}^{2}\Big)\leq C(T),\\
  \D\sup_{0\leq t\leq T}&(\|\nabla\rho\|_{L^{2}\cap L^{6}}+\|\nabla u\|_{H^{1}})+\int_{0}^{T}\|\nabla u\|_{L^{\infty}}\leq C(T),\\
  \D\sup_{0\leq t\leq T}&(\|\rho^{\frac{1}{2}}u_{t}\|_{L^{2}}^{2})+\int_{0}^{T}\|\nabla u_{t}\|_{L^{2}}^{2} \leq C(T),\\
  \D\sup_{0\leq t\leq T}&(\|\nabla \rho\|_{H^{1}}+\|\nabla P\|_{H^{1}})+\int_{0}^{T}\|\nabla^{2}u\|_{H^{1}}^{2} \leq C(T),\\
  \D\sup_{0\leq t\leq T}&(\|\rho_{t}\|_{H^{1}}+\|P_{t}\|_{H^{1}})
  +\int_{0}^{T}(\|\rho_{tt}\|_{L^{2}}^{2}+\|P_{tt}\|_{L^{2}}^{2})\leq C(T),\\
  \D\sup_{0\leq t\leq T}&\sigma(\|\nabla u\|_{H^{2}}^{2}+\|\nabla u_{t}\|_{L^{2}}^{2}
  +\|\nabla H_{t}\|_{L^{2}}^{2}+\|\nabla H\|_{H^{2}}^{2})\non\\
  \D&+\int_{0}^{T}\sigma\Big(\|\rho^{\frac{1}{2}}u_{tt}\|_{L^{2}}^{2}
  +\|\nabla u_{t}\|_{H^{1}}^{2}+\|H_{tt}\|_{L^{2}}^{2}\Big)\leq C(T),\\
  \D\sup_{0\leq t\leq T}&(\|\nabla \rho\|_{W^{1,q}}+\|\nabla P\|_{W^{1,q}})\non\\
  \D&+\int_{0}^{T}\Big(\|\nabla u_{t}\|_{L^{q}}^{p_{0}}+\|\nabla^{2} u\|_{W^{1,q}}^{p_{0}}\Big)\leq C(T),
 \end{align}
 for fixed $q\in (3,6)$, where $ 1\leq p_{0}<\frac{4q}{5q-6}\in (1,2)$.
 \begin{align}
 \D\sup_{0\leq t\leq T}&\sigma(\|\rho^{\frac{1}{2}}u_{tt}\|_{L^{2}}+\|\nabla^{2}u_{t}\|_{L^{2}}+\|\nabla^{2}u\|_{W^{1,q}}+\|\nabla^{2}H\|_{H^{2}}
 +\|\nabla^{2}H_{t}\|_{L^{2}}+\|H_{tt}\|_{L^{2}})\non\\
 \D&+\int_{0}^{T}\sigma^{2}\Big(\|\nabla u_{tt}\|_{L^{2}}^{2}+\|\nabla H_{tt}\|_{L^{2}}^{2}\Big)\leq C(T).
 \end{align}
\end{lem}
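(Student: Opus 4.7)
The plan is to bootstrap from the time-independent estimates of Section~3---in particular the pointwise bound $\rho \leq 2\bar\rho$ and the uniform bounds on $A_1(T), A_2(T), A_3(T)$---to higher-norm estimates that are now allowed to depend on $T$, $\mu$, $\lambda$, $\nu$, $\gamma$, $\bar\rho$, $M_1$, $M_2$, and $\|g\|_{L^2}$ from the compatibility condition (\ref{2dbu-E1.8}). Since smallness is no longer needed to close the scheme, the goal at each step is simply to trade smallness for $T$-dependent Gr\"onwall growth.

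First, I would remove the $\sigma$-weights in estimates (\ref{2dbu-E3.030}) and (\ref{2dbu-E3.33}). The compatibility condition guarantees that $(\rho^{1/2}\dot u)(0) = g \in L^2$, so taking $m=0$ in (\ref{2dbu-E3.042}) and (\ref{2dbu-E3.048}), integrating over $(0,T)$ with this initial value, and applying Gr\"onwall together with the bound on $\int_0^T(\|\nabla u\|_{L^2}^4 + \|\nabla H\|_{L^2}^4)$ from Lemmas~\ref{2dbu-L3.02}--\ref{2dbu-L3.3}, yields the first two claimed estimates. For $\nabla\rho,\nabla P \in L^2 \cap L^6$, I would differentiate the continuity and pressure transport equations to get transport equations for $\nabla\rho$ and $\nabla P$, test against $|\nabla\rho|^{p-2}\nabla\rho$ for $p\in\{2,6\}$, and reduce via Gr\"onwall to controlling $\int_0^T\|\nabla u\|_{L^\infty}\,\md t$. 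For the latter, combine the elliptic estimate $\|\nabla u\|_{L^\infty} \leq C(\|G\|_{L^\infty} + \|\omega\|_{L^\infty} + \|P\|_{L^\infty} + \|H\|_{L^\infty}^2)$ with a Brezis--Wainger-type logarithmic Sobolev inequality bounding $\|G\|_{L^\infty}+\|\omega\|_{L^\infty}$ by $\|\nabla G\|_{L^q}+\|\nabla\omega\|_{L^q}$ for some $q\in(3,6)$, which through Lemma~\ref{2dbu-L2.2} reduces to $L^q$-norms of $\rho\dot u$ and $H\cdot\nabla H$.

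The $\rho^{1/2}u_t$ bound then follows from $u_t = \dot u - u\cdot\nabla u$ and the first step, while $\nabla u \in H^1$ is obtained by elliptic regularity applied to $\mu\Delta u + (\mu+\lambda)\nabla\mdiv u = \rho\dot u + \nabla P - H\cdot\nabla H + \tfrac{1}{2}\nabla|H|^2$, and analogously $\nabla^2 H$ is controlled by treating $\nu\Delta H = H_t - (H\cdot\nabla u - u\cdot\nabla H - H\mdiv u)$. The $\sigma$-weighted higher estimates involving $u_{tt}$, $H_{tt}$, $\nabla u_t \in H^1$, $\nabla H_t \in H^1$ are produced by differentiating the momentum and magnetic equations once more in $t$, testing against $\sigma u_{tt}$ and $\sigma H_{tt}$ (or $\sigma^2$ versions) to absorb singularities at $t=0$, and invoking the compatibility condition only once. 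Finally, the $W^{1,q}$-regularity of $\nabla\rho,\nabla P$ is closed by one more pass through the transport-equation argument, using elliptic $W^{2,q}$-regularity for $u$.

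The main obstacle I anticipate is the strong magnetohydrodynamic coupling through $\nabla\times(u\times H)$ and $(\nabla\times H)\times H$, which ties every level of regularity of $u$ to the same level for $H$ and requires the terms $\|H\|_{L^3}$, $\sigma\|\nabla^2 H\|_{L^2}^2$, $\sigma\|H_t\|_{L^2}^2$ from Section~3 to be carefully inserted at each bootstrap step. A secondary difficulty is the circular dependence between $\nabla\rho \in W^{1,q}$ and $\nabla^2 u \in W^{1,q}$ created by the logarithmic Sobolev inequality used to bound $\|\nabla u\|_{L^\infty}$; this must be broken by first obtaining the $t$-derivative estimates (which do not require $\nabla^2 u \in L^\infty$) and then feeding them back into the elliptic estimate for $\nabla^2 u$.
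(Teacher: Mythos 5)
Your plan is essentially the argument the paper relies on: the paper omits the proof of this lemma altogether, deferring to Li--Xu--Zhang \cite{H.L. L}, and the proof there is exactly the bootstrap you describe (using the compatibility condition to remove the $\sigma$-weights from the $\dot u$- and $H_t$-level estimates, transport estimates for $\nabla\rho,\nabla P$ in $L^2\cap L^6$ and then $W^{1,q}$ coupled with a logarithmic bound for $\|\nabla u\|_{L^{\infty}}$ through $G$ and $\omega$ via Lemma \ref{2dbu-L2.2}, followed by time-differentiated, $\sigma$-weighted energy estimates and elliptic $W^{2,q}$ regularity, with the magnetic coupling handled by the Section~3 bounds on $H$). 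The only caveat is minor: the quartic term appearing at the $m=0$ level is $\int_0^T\|\nabla u\|_{L^4}^4$, not $\int_0^T\|\nabla u\|_{L^2}^4$, and it is controlled through $\|\nabla u\|_{L^4}^4\le C\|\nabla u\|_{L^2}\|\nabla u\|_{L^6}^3$ together with (\ref{2dbu-E2.06}) and the bounds of Lemma \ref{2dbu-L3.7}, rather than by Gr\"onwall on $\int_0^T\|\nabla u\|_{L^2}^4$ directly.
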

Thanks to all the a priori estimates established above, we now are ready to prove Theorem \ref{2dbu-T1.1}. In fact, this can be done in a method the same as that in \cite{H.L. L}, we omit it here for simplicity.

\section*{Acknowledgement}
This work was
supported  by the National Natural Science Foundation of China
$\#$11331005, the Program for Changjiang Scholars and Innovative
Research Team in University $\#$IRT13066, and the Special Fund for
Basic Scientific Research of Central Colleges $\#$CCNU12C01001.
The first author was also supported by excellent doctorial
dissertation cultivation grant from Central Normal University.

\end{document}